\DeclareRobustCommand{\SkipTocEntry}[5]{}
\newcommand{\EE}{{\mathcal E}}
\newcommand{\FF}{{\mathcal F}}
\newcommand{\LL}{{\mathcal L}}
\newcommand{\C}{{\mathbb C}}
\newcommand{\N}{{\mathbb N}}
\newcommand{\R}{{\mathbb R}}
\newcommand{\Z}{{\mathbb Z}}
\newcommand{\Sph}{{\mathbb S}}
\newcommand{\intL}{\int_{-L}^L}
\newcommand{\eper}{\operatorname{per}}
\newcommand{\intp}{\int_{-\pi}^{\pi}}
\newtheorem{theorem}{Theorem}[section]
\newtheorem{lemma}[theorem]{Lemma}
\newtheorem{corollary}[theorem]{Corollary}
\newtheorem{proposition}[theorem]{Proposition}
\newtheorem{remark}[theorem]{Remark}
\newtheorem{definition}[theorem]{Definition}
\newtheorem{Open problem}[theorem]{Open problem}
\numberwithin{equation}{section}
\subjclass[2010]{35J61, 35B10, 35A15, 35S05.}
\keywords{}
\begin{document}

\title[Periodic solutions to integro-differential equations]
{Periodic solutions to integro-differential equations:
variational formulation, symmetry, and regularity}

\author[X. Cabré]{Xavier Cabr\'e}
\address{X. Cabr\'e \textsuperscript{1,2,3}
\newline
\textsuperscript{1} 
ICREA, Pg. Lluis Companys 23, 08010 Barcelona, Spain
\newline
\textsuperscript{2} 
Universitat Polit\`ecnica de Catalunya, Departament de Matem\`{a}tiques and IMTech, Diagonal 647, 08028 Barcelona, Spain
\newline
\textsuperscript{3} 
Centre de Recerca Matem\`atica, Edifici C, Campus Bellaterra, 08193 Bellaterra, Spain.}
\email{xavier.cabre@upc.edu}

\author[G. Csat\'o]{Gyula Csat\'o}
\address{G. Csat\'o \textsuperscript{1,2}
\newline
\textsuperscript{1}
Departament de Matem\`{a}tiques i Inform\`{a}tica,
Universitat de Barcelona,
Gran Via 585,
08007 Barcelona, Spain
\newline
\textsuperscript{2}
Centre de Recerca Matem\`atica, Edifici C, Campus Bellaterra, 08193 Bellaterra, Spain.}
\email{gyula.csato@ub.edu}

\author[A. Mas]{Albert Mas}
\address{A. Mas \textsuperscript{1,2}
\newline
\textsuperscript{1}
Departament de Matem\`atiques,
Universitat Polit\`ecnica de Catalunya,
Campus Diagonal Bes\`os, Edifici A (EEBE), Av. Eduard Maristany 16, 08019
Barcelona, Spain
\newline
\textsuperscript{2}
Centre de Recerca Matem\`atica, Edifici C, Campus Bellaterra, 08193 Bellaterra, Spain.}
\email{albert.mas.blesa@upc.edu}

\date{\today}
\thanks{The three authors are supported by the Spanish grants PID2021-123903NB-I00 and RED2018-102650-T funded by MCIN/AEI/10.13039/501100011033 and by ERDF ``A way of making Europe'', and by the Catalan grant 
2021-SGR-00087. The second author is in addition supported by the Spanish grant PID2021-125021NA-I00.
This work is supported by the Spanish State Research Agency, through the Severo Ochoa and Mar\'ia de Maeztu Program for Centers and Units of
Excellence in R\&D (CEX2020-001084-M)}

\begin{abstract}
We consider nonconstant periodic constrained minimizers of semilinear elliptic equations for integro-differential operators in $\mathbb{R}$. We prove that, after an appropriate translation, each of them is necessarily an even function which is decreasing in half its period. In particular, it has only two critical points in half its period, the absolute maximum and minimum. If these statements hold for all nonconstant periodic solutions, and not only for constrained minimizers, remains as an open problem. 

Our results apply to operators with kernels in two different classes: kernels $K$ which are convex and kernels for which
$K(\tau^{1/2})$ is a completely monotonic function of $\tau$. This last new class arose in our previous work on
nonlocal Delaunay surfaces in $\mathbb{R}^n$. Due to their symmetry of revolution, it gave rise to a 1d problem involving an operator with a nonconvex kernel. Our proofs are based on a not so well-known Riesz rearrangement inequality on the
circle~$\mathbb{S}^1$ established in 1976.

We also put in evidence a new regularity fact which is a truly nonlocal-semilinear effect and also occurs in the nonperiodic setting. Namely, for nonlinearities in $C^\beta$ and when $2s+\beta <1$ ($2s$ being the order of the operator), the solution is not always $C^{2s+\beta-\epsilon}$ for all $\epsilon >0$.

\bigskip

\end{abstract}

\maketitle



\section{Introduction}

\subsection{A new symmetry result for periodic solutions}
\label{ss subsection1.1}

It is well known that bounded solutions to the semilinear second order ODE $-u''=f(u)$ in $\R$ are ---up to a multiplicative factor~$\pm 1$ and up to translations--- either increasing in $\R$, or even with respect to $0$ and decreasing in $(0,+\infty)$, or periodic. This follows immediately from the fact that $u$ must be even with respect to any of its critical points\footnote{This is a consequence of the uniqueness theorem for ODEs: if $u'$ vanishes at $x=0$, say, then $v(x):=u(-x)$ solves the same equation and has same initial position and derivative at $x=0$ as $u$.} after considering the cases when $u'$ vanishes at none, only one, or at least two points. 

It is then natural to ask whether this classification  also holds true for the fractional Laplacian, that is, if, for every 
$0<s<1$, any bounded solution $u$ to 
\begin{equation}
 \label{eq:frac Lapl intro}
  (-\Delta)^s u=f(u) \quad\text{ in }\R
\end{equation}
belongs to one of the above mentioned three categories. Recall that
\begin{equation}\label{defi laplacian_}
(-\Delta)^s u(x):=c_s\, \text{P.V.}\!\int_\R dy\,
\frac{u(x)-u(y)}{|x-y|^{1+2s}},\qquad
c_s:=\frac{s4^s\Gamma(1/2+s)}{\sqrt\pi\,\Gamma(1-s)}.
\end{equation} 
Due to the lack of an analogue uniqueness result for  \eqref{eq:frac Lapl intro}, the aforementioned argument can not be carried out in the nonlocal case.  

We learned this question from Sol\`a-Morales when working on our joint paper \cite{Cabre Sola bdry reaction}. The paper dealt with equation \eqref{eq:frac Lapl intro} for $s=1/2$. To the best of our knowledge the conjecture is still wide open. It is only known to be true for the equation
\begin{equation}\label{eq:intro1.1:BenOno}
   (-\Delta)^{1/2}u=-u+u^2\quad\text{ in }\R
\end{equation} 
modelling traveling wave solutions to the Benjamin-Ono equation in hydrodynamics \cite{Benj,Ono} and for the Peierls-Nabarro equation 
\begin{equation}\label{eq:intro1.1:Peierls}
   (-\Delta)^{1/2}u=\sin(\pi u)\quad\text{ in }\R
\end{equation} 
from continuum modeling of dislocations in crystals \cite{Nab}. These are two really special equations since they are ``completely integrable''. More precisely,  Amick and Toland \cite{AmickToland} were able to find, with explicit expressions, all bounded solutions to 
\eqref{eq:intro1.1:BenOno}. This required $f(u)$ to be exactly $-u+u^2$ and $s=1/2$.\footnote{Their method consisted of identifying the equation as the boundary relation with respect to a local one in the upper half-plane which can be solved using complex analysis techniques.}
Later, in \cite{Toland} Toland also found all bounded solutions to the Peierls-Nabarro equation using the fact that a derivative of any solution is the difference of two solutions to the Benjamin-Ono equation. 

Both equations \eqref{eq:intro1.1:BenOno} and \eqref{eq:intro1.1:Peierls} admit infinitely many periodic solutions of different amplitude. Equation \eqref{eq:intro1.1:BenOno} admits, on the other hand, a ``ground state'' solution; that is, an even solution which is decreasing in $(0,+\infty)$. Instead, \eqref{eq:intro1.1:Peierls} admits a ``layer solution'', that is, a solution that is increasing in all of $\R$.

In \Cref{intro:fourier mult} we will describe other dispersive equations (besides the Benjamin-Ono equation) that lead, when looking for standing or traveling wave solutions, to periodic solutions of semilinear elliptic integro-differential equations fitting in our general framework below.

Back to the fractional Laplacian, for nonlinearities different than the two above there are few works that classify solutions or prove symmetry and monotonicity properties. Let us mention the main ones, including also some of those that establish Sturm-Liouville type results for integro-differential equations ---and thus the results are obtained in the absence of an ODE existence and uniqueness theorem. We focus mainly on results on the evenness and monotonicity of certain solutions, since the current paper concerns these properties in the periodic framework.

Chen, Li, and Ou \cite{Chen Li Ou} adapted the moving planes method to the fractional Laplacian and proved the even symmetry of positive solutions tending to zero at infinity (sometimes called ``ground states''). 

Still regarding ``ground state'' solutions, an important 2013 work of Frank and Lenzmann~\cite{Frank Lenzmann 1} established the uniqueness and nondegeneracy of a constrained minimizer in $H^s(\R)$ for subcritical equations of the form
\begin{equation}
 \label{eq:frac Lapl gBO}
(-\Delta)^{s}u=-u+u^p\quad\text{ in }\R.
\end{equation}  
For this, they first prove a nondegeneracy theorem for the linearized equation $(-\Delta)^{s}v+v-pu^{p-1}v=0$ at a ground state $u$. Here, the essential idea relies on a suitable substitute for Sturm-Liouville theory of ODEs to study the sign-changes of the second eigenfunction of an operator $(-\Delta)^s+V(x)$. Then, by means of an implicit function theorem, the nondegeneracy result is used to construct a unique branch of solutions depending on the parameter $s$. To conclude uniqueness of the solution, they extend the branch up to $s=1$ and use the uniqueness result for ODEs when $s=1$. 

For radial solutions to the same equation $(-\Delta)^su=-u+u^p$ in higher dimensions, Frank, Lenzmann, and Silvestre \cite{Frank Lenzmann Silvestre} extended this uniqueness and nondegeneracy result. To do so, they proved Sturmian properties  for the radial eigenfunctions of $(-\Delta)^s+V(|x|)$  with $V$ nondecreasing in $(0,+\infty)$.\footnote{Very recently,  Fall and Weth  \cite{Fall Weth 2023} improved the uniqueness result of \cite{Frank Lenzmann Silvestre} by  removing a hypothesis which required the ``ground state'' to be a constrained minimizer. They still count with the information, however, that ``ground states'' are radial, and in particular even.} Their argument uses a Hamiltonian for the fractional Laplacian that had been discovered by the first author, Sol\`a-Morales, and Sire~\cite{Cabre Sola bdry reaction, Cabre Sire A}.\footnote{In a forthcoming article~\cite{Cabre Mas Hamilt}, we will make an in depth study of the Hamiltonian for periodic solutions, also in the case of more general integro-differential operators.}

In the following result we establish the even symmetry and monotonicity of periodic constrained minimizers for a nonlocal Lagrangian whose first variation is a semilinear equation $(-\Delta)^s u=f(u)$. 
Its proof is based on a not so well-known Riesz rearrangement inequality on~$\Sph^1$ from 1976. It is an open problem to know if evenness holds also for all periodic solutions ---not only for constrained minimizers. 

\begin{theorem}
\label{thm: frac Laplacian Intro}
Let $L>0$, $0<s<1$, $G\in C^1(\R)$, $\widetilde{G}\in C^1(\R)$, $c\in\R$, and set $g:=G'$, $\widetilde{g}:=\widetilde{G}'$. Assume that $u\in L^\infty(\R)$ is $2L$-periodic and minimizes the functional
\begin{equation}\label{energy_fac_lap}
\begin{split}
E(v):= \frac{c_s}{4}\int_{-L}^L dx\int_\R dy\,\frac{|v(x)-v(y)|^2}{|x-y|^{1+2s}}-\int_{-L}^L G(v)
\end{split}
\end{equation}
among all $2L$-periodic functions $v\in L^\infty(\R)$ satisfying the constraint
\begin{equation}
\label{thm1_constr}
\int_{-L}^L \widetilde{G}(v)=c.
\end{equation}
We then have:

\begin{itemize}
\item[$(a)$] Up to a translation in the variable $x\in\R$, $u$ is an even function in $\R$ which is nonincreasing in $(0,L)$.\footnote{\label{footpm}
In other words, there exists $z\in \R$, such that $y\mapsto u(z+y)$ is an even function in $\R$ which is nonincreasing for $y\in (0,L)$. A comment here is in order. If a similar result, based on a rearrangement, were proved for ``ground state'’ solutions in $\R$, its conclusion should include also the case in which $-u(z+\cdot)$ (with the minus sign) is even and nonincreasing in $(0,L)$. However, this case is not relevant for periodic solutions. Indeed, such a periodic solution (with its minimum at the origin) will be even and nonincreasing in $(0,L)$ after a further translation that places its maximum at the origin.
}
\;
\item[$(b)$] 
Assume that $g$ and $\widetilde{g}$ belong to $C^{\delta}(\R)$  for some $\delta>0$. Then, $u\in C^{2s+\epsilon}(\R)$ for some $\epsilon>0$ and 
solves the equation $(-\Delta)^su =g(u)+\lambda \widetilde{g}(u)$ in $\R$, for some $\lambda\in\R$.

\;
\item[$(c)$] Assume that $g$ and $\widetilde{g}$ belong to $C^{1+\delta}(\R)$ for some $\delta>0$. Then, $u'$ is a continuous function in $\R$ and, up to a translation in the variable $x\in\R$, $u$ is even and $u'<0$ in $(0,L)$, unless $u$ is constant. In particular,  $u$  is an even function in $\R$ which is decreasing in $(0,L)$. Thus, $u$ has only two critical points in~$[0,L]$, its maximum at $x=0$ and its minimum at $x=L$.
\end{itemize} 

\end{theorem}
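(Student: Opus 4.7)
Part (a): The plan is to fold the energy onto the torus $\Sph^1_L := \R/(2L\Z)$ and invoke the Riesz rearrangement inequality on the circle of Baernstein and Taylor (1976), the tool alluded to in the abstract. Periodicity rewrites the Dirichlet term of $E$ as $\frac{c_s}{4}\iint_{[-L,L]^2} K_L(x-y)|v(x)-v(y)|^2\,dx\,dy$ with $K_L(\tau):=\sum_{k\in\Z}|\tau+2kL|^{-1-2s}$; this kernel is positive, $2L$-periodic, and strictly decreasing in $|\tau|$ on $[0,L]$. Truncating $K_L^\epsilon:=K_L\chi_{\{|\tau|>\epsilon\}}$ to kill the singularity at $0$, the identity $|v(x)-v(y)|^2=v(x)^2+v(y)^2-2v(x)v(y)$ together with the fact that $\int_{-L}^L K_L^\epsilon(x-y)\,dy=C_\epsilon$ is a constant in $x$ (by translation invariance on the torus) gives
\begin{equation*}
\iint K_L^\epsilon(x-y)|v(x)-v(y)|^2\,dx\,dy = 2C_\epsilon\int v^2 - 2\iint K_L^\epsilon(x-y)v(x)v(y)\,dx\,dy.
\end{equation*}
By equimeasurability $\int v^2=\int(v^*)^2$, and by Baernstein-Taylor on $\Sph^1_L$ (applied after a shift $v\mapsto v+M$ so $v+M\geq 0$, with the $M$-corrections canceling via $\int v=\int v^*$) the cross integral satisfies $\iint K_L^\epsilon\,v(x)v(y)\leq \iint K_L^\epsilon\,v^*(x)v^*(y)$. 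Hence the truncated Dirichlet form is nonincreased by rearrangement, and monotone convergence as $\epsilon\to 0$ transfers the inequality to the original kernel $K_L$. Equimeasurability also preserves $\int G(v)$ and the constraint $\int\widetilde G(v)=c$, so $u^*$ is admissible with $E(u^*)\leq E(u)$; since $u$ minimizes, equality must hold. To conclude that $u$ itself equals a translate of $u^*$, I would invoke rigidity in the Baernstein-Taylor inequality on $\Sph^1$, or else run an iterated-polarization argument: each polarization strictly decreases the Dirichlet form unless the function is already symmetric across the corresponding reflection line, and the iterates converge to $u^*$.

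Parts (b) and (c): Testing against admissible perturbations $u+\epsilon\varphi$ that preserve the constraint to first order (i.e.\ $\int\widetilde g(u)\varphi=0$) produces the weak Euler-Lagrange equation $(-\Delta)^s u = g(u)+\lambda\widetilde g(u)$ for some Lagrange multiplier $\lambda\in\R$. The regularity claim in (b) then follows from a standard nonlocal bootstrap: $u\in L^\infty$ makes $g(u)+\lambda\widetilde g(u)\in L^\infty$, so by interior Schauder estimates for $(-\Delta)^s$ one obtains $u\in C^\alpha$ for some $\alpha>0$; Hölder composition promotes $g(u)+\lambda\widetilde g(u)$ to $C^{\alpha\delta}$, which in turn gives $u\in C^{2s+\alpha\delta-\epsilon'}\subset C^{2s+\epsilon}$ for some $\epsilon>0$ (modulo integer exponents). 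For (c), further iteration exploiting $g,\widetilde g\in C^{1+\delta}$ yields $u\in C^{1+\sigma}$, so that $u'\in C^\sigma(\R)$, and differentiating the equation gives the linear nonlocal equation
\begin{equation*}
(-\Delta)^s u' = \bigl[g'(u)+\lambda\widetilde g'(u)\bigr]u' \quad\text{in }\R.
\end{equation*}
By (a), $u$ is even about $0$ and (by $2L$-periodicity) about $L$, so $u'$ is odd about both points; in particular $u'(0)=u'(L)=0$ and $u'\leq 0$ in $(0,L)$. Strict negativity on $(0,L)$ unless $u$ is constant should follow from a unique-continuation / strong maximum principle for the linear nonlocal operator $(-\Delta)^s+V(x)$ applied to $u'$: if $u'$ vanishes on a positive-measure set in $(0,L)$, fractional unique continuation forces $u'\equiv 0$ and hence $u$ constant, while isolated interior zeros can be excluded via a nonlocal Hopf lemma, exploiting the odd symmetry of $u'$ in the spirit of the antisymmetric maximum principle used in nonlocal moving-planes arguments.

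Main obstacle: The heart of the argument is the rigidity step of (a), namely inferring from $E(u^*)=E(u)$ the stronger statement that $u$ itself is a translate of $u^*$. On $\R^n$ this is Burchard's theorem, but on $\Sph^1$ one must either directly analyze the equality case of Baernstein-Taylor or circumvent it via a polarization-based workaround, which itself requires a separate strict-decrease analysis tailored to the circle. A secondary but nontrivial challenge in (c) is the unique-continuation step for the linearized equation, made delicate by the fact that $u'$ changes sign across the period.
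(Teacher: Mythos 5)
Your treatment of parts (a) and (b) follows essentially the same route as the paper: fold the Dirichlet form onto the torus via the periodized kernel $\overline K(\tau)=\sum_k K(|\tau+2kL|)$, truncate near the singularity, expand the square, and apply the Riesz rearrangement inequality on $\Sph^1$ together with its equality case. The rigidity step you flag as the main obstacle is indeed resolved in the literature (Burchard--Hajaiej, 2006, treat the equality case on $\Sph^n$), which is exactly what the paper invokes; the paper additionally has to transfer the equality from the full kernel to a truncated one (by splitting $K=K_1+M$ with both pieces separately satisfying the inequality), and it must verify --- not merely assert --- that the periodized kernel is decreasing on $(0,L)$, which for the convex kernel $c_s t^{-1-2s}$ follows from the elementary pairing $K(2k\pi+t_1)+K(2(k+1)\pi-t_1)\ge K(2k\pi+t_2)+K(2(k+1)\pi-t_2)$ for $0<t_1<t_2<\pi$.

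The genuine gap is in part (c). Having reduced to the linearized equation $(-\Delta)^s u'=[g'(u)+\lambda\widetilde g'(u)]u'$ with $u'$ odd about $0$ and $L$ and $u'\le0$ on $(0,L)$, you propose to conclude via ``fractional unique continuation'' or a ``nonlocal Hopf lemma in the spirit of the antisymmetric maximum principle.'' Neither applies off the shelf here. What is needed is: if $u'(x_0)=0$ at a single point $x_0\in(0,L)$, then $u'\equiv0$; and the standard antisymmetric maximum principle concerns functions odd about one hyperplane, not $2L$-periodic functions odd about two points. The periodicity is precisely the difficulty: splitting $\int_\R$ into translates of $(-L,L)$, the contributions $-\int v(x)K(|x_0-x|)\,dx$ over successive pairs of periods alternate in sign, so there is no immediate sign for $(-\Delta)^s u'(x_0)$. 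The paper develops a new strong maximum principle for periodic odd solutions (its Theorem~1.7) to handle exactly this, and its proof again rests on the monotonicity of the periodized kernel: writing $(-\Delta)^s u'(x_0)=-\int_0^L(-u'(x))\bigl(\overline K(x_0-x)-\overline K(x_0+x)\bigr)\,dx$ and using $\overline K(|x_0-x|)>\overline K(x_0+x)$ (since $|x_0-x|$ is closer to $0$ than $x_0+x$ is to either $0$ or $2L$) yields $(-\Delta)^s u'(x_0)>0$ unless $u'\equiv0$, contradicting the equation at $x_0$. So the missing ingredient in your sketch is not unique continuation but this periodic comparison argument; once you have the decreasing periodized kernel from part (a), the same fact closes part (c).
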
 
 
\begin{Open problem}
{\rm
Do the conclusions of Theorem~\ref{thm: frac Laplacian Intro} hold also for all (smooth) periodic solutions to \eqref{eq:frac Lapl intro}---and not only for constrained minimizers?
}
\end{Open problem}

It is worth pointing out that some semilinear equations $(-\Delta)^s u= f(u)$ do admit periodic constrained minimizers
which are nonconstant. Indeed, in the forthcoming work~\cite{Cabre Mas Sola AC-BO} we will show their
existence for the equation
$(-\Delta)^s u=-u+u^p$ in $\R$, where $u>0$ and $p>1$ is subcritical, whenever the period $2L$ is large
enough.\footnote{With the notation of the theorem, here we will have $G(u)=-u^2/2$, $\widetilde{G}(u)=|u|^{p+1}/(
p+1)$, and the constant~$c$ appropriately chosen so that the Lagrange multiplier $\lambda$ is $1$.}
On the other hand, in \Cref{section:appendix:minimizers const} we show that, for $f$ regular enough, every
bounded periodic solution to $(-\Delta)^s u= f(u)$ in $\R$ which is a stable solution (here we impose no
constraint)\footnote{Stability means that the second variation of $E$ at the $2L$-periodic solution is nonnegative
definite when acting on $2L$-periodic functions.} must be identically constant. In particular, under no constraint,
there are no nonconstant bounded periodic minimizers ---not even local minimizers, meaning minimizers among
small periodic perturbations.

For more general integro-differential operators, below we will introduce some classes of kernels for which the analogue of \Cref{thm: frac Laplacian Intro} also holds; see \Cref{coro constrained minim lagrangian2}.

\subsection{The Lagrangian for nonlocal periodic problems}\label{ss:var_intro}

To find solutions to the Dirichlet problem
$(-\Delta)^s u=g(u)$ in $(-L,L)$ and $u=\varphi$ in 
$\R\setminus (-L,L)$ by variational methods, one looks for critical points of the well-known Lagrangian
\begin{equation}
\begin{split}
 \label{intro:eq:Lagran frac lap nonper}
E(v)&+\frac{c_s}{4}\int_{\R\setminus (-L,L)}\! dx \int_{(-L,L)}\! dy\,\frac{|v(x)-v(y)|^2}{|x-y|^{1+2s}}\\
&= \frac{c_s}{4}
\iint_{\left(\R\times\R\right) \setminus \left( (-L,L)^c \times (-L,L)^c \right)} dxdy\,\frac{|v(x)-v(y)|^2}{|x-y|^{1+2s}}-\int_{-L}^L G(v)
\end{split}
\end{equation}
among functions with a prescribed exterior Dirichlet datum $\varphi$. Recall that $E$ is the functional in \eqref{energy_fac_lap}. However, in the periodic setting, one cannot prescribe any exterior datum, only periodicity. It turns out, as we see from \eqref{intro:eq:Lagran frac lap nonper}, that the Lagrangian $E$ of \Cref{thm: frac Laplacian Intro} for the periodic framework differs from the Lagrangian for the Dirichlet problem.

To our knowledge, a nonlocal Lagrangian of the same type as $E$ (that is, with $(-L,L)\times \R$ as set of integration), and modeled to find periodic solutions, appeared for first time in 2015 in the arXiv version of \cite{Davila et alt Delaunay}. That work concerned the construction, by variational methods, of some nonlocal analogues of Delaunay surfaces. When \cite{Davila et alt Delaunay} became available, we communicated to its authors that the first variation of their functional was the well-known nonlocal mean curvature.  \cite{Davila et alt Delaunay} had not addressed the first variation ---that was believed to be a kind of nonlocal mean curvature, but not exactly it. At that time, we were already working on the periodic Lagrangian \eqref{energy_fac_lap} for the fractional Laplacian.\footnote{That the nonlocal mean curvature is the first variation of the periodic fractional perimeter introduced in~\cite{Davila et alt Delaunay} already appeared in the Master's Thesis from 2017 of M. Alviny\`a \cite{Alvinya}, directed within our group; see its Proposition~5.0.3. On the other hand, Section 4.2 of the Thesis already introduces the periodic semilinear Lagrangian \eqref{energy_fac_lap}, which also appeared, in independent works from ours, in DelaTorre, del Pino, Gonz\'alez, and Wei~\cite{DDGW} and in Barrios, Garc\'ia-Meli\'an, and Quaas \cite{BGQ}.} 

The variational setting for general integro-differential operators that the current paper undertakes serves as foundation for existence results of periodic solutions to integro-differential equations, and not only in dimension one. Indeed, in \cite{Cabre Csato Mas delaunay} we established for first time the existence (for any given volume constraint within a period) of nonlocal Delaunay cylinders (i.e., periodic sets with constant nonlocal mean curvature) by variational methods ---something that was left as an open problem in \cite{Davila et alt Delaunay}.
The results in \cite{Cabre Csato Mas delaunay}  are strongly related to the developments of the present paper. 

Another instance is our forthcoming work \cite{Cabre Mas Sola AC-BO}, where we strengthen the existence results of Gui, Zhang, and Du~\cite{GZD} on periodic solutions for the fractional Laplacian with Allen-Cahn type nonlinearities, and those of Barrios, Garc\'{\i}a-Meli\'an, and Quaas~\cite{BGQ} for Benjamin-Ono type nonlinearities.

\subsection{The symmetry result for general nonlocal operators}\label{ss:symmetry_intro}

We also study more general equations, of the form
\begin{equation}\label{intro semilinear eq}
\LL_K u=f(u)\quad\text{in $\R$},
\end{equation}
where $u:\R\to\R$ is periodic and $\LL_K$ is an integro-differential operator defined by
\begin{equation}\label{defi laplacian_K}
\LL_K u(x):=\lim_{\epsilon\downarrow0}\int_{\{y\in\R:\,|x-y|>\epsilon\}}dy\,
\big(u(x)-u(y)\big)K(|x-y|)
\end{equation}
and $K:(0,+\infty)\to[0,+\infty)$.

In some results, we will assume one or both of the standard conditions on the growth of $K$:
\begin{equation}\label{defi laplacian_K growth1}
K(t)\leq \frac{\Lambda}{t^{1+2s}}\quad\text{for all $t>0$}
\end{equation}
and
\begin{equation}\label{defi laplacian_K growth2}
K(t)\geq\frac{\lambda}{t^{1+2s}}\quad\text{for all $t>0$,}
\end{equation}
for some constants $0<s<1$ and $0<\lambda\leq\Lambda$. 
 
For semilinear equations involving the fractional Laplacian, the Lagrangian can be constructed using a suitable extension to one more real variable, converting the problem into a local one. This extension technique has been used in \cite{Cui Wang 2021,DG,GZD} to show the existence of periodic solutions to certain classes of semilinear equations of the type $(-\Delta)^su=f(u)$ in~$\R.$  
However, for general integro-differential operators $\LL_K$ no extension exists a priori, and the Lagrangian must be defined directly ``downstairs''. We will easily see that the appropriate Lagrangian for $2L$-periodic solutions to $\LL_K u=g(u)$ in $\R$ is, in analogy to the case of the fractional Laplacian, 
\begin{equation}\label{lagrangian rearrange mod u}
\begin{split}
\EE_{\LL_K}(u):=\frac{1}{2}[u]_K^2-\int_{-L}^L G(u),
\end{split}
\end{equation}
where $g=G'$ and
\begin{equation}
\label{eq:intro per Gagl seminorm 1}
\begin{split}
[u]_K:=\Big(\frac{1}{2}\int_{-L}^L\! dx\int_\R dy\,|u(x)-u(y)|^2 K(|x-y|)\Big)^{1/2}.
\end{split}
\end{equation}
Note that if $\LL_K=(-\Delta)^s$ then $\EE_{\LL_K}$ is the Lagrangian $E$ in \eqref{energy_fac_lap}. 

To prove the evenness and monotonicity  of $2L$-periodic constrained minimizers, we use a periodic rearrangement. For a $2L$-periodic function $u:\R\to\R$, the periodic symmetric decreasing rearrangement of~$u$ is defined as the $2L$-periodic function $u^{*\eper}:\R\to\R$ such that $u^{*\eper}\chi_{(-L,L)}$ is the Schwarz rearrangement of $|u|\chi_{(-L,L)}$ 
on $\R$. Recall that the Schwarz rearrangement of $v:(-L,L)\to \R$, denoted by $v^*$, is the nonnegative function which is equimeasurable with $|v|,$ nonincreasing in $(0,L)$, and even with respect to $0$ ---that is, $v^*$ is obtained by replacing the superlevel-sets of $|v|$ by symmetric intervals.\footnote{Most rearrangements are defined by rearranging the superlevel-sets of $|u|$ and not of $u$. One might define the rearrangement by considering the superlevel-sets of $u$, but then one needs an additional assumption on $u$, such as, for instance, being bounded from below.}

The classical P\'olya-Szeg\H o inequality for functions vanishing on the boundary of $(-L,L)$ states that
\begin{equation}\label{gy:intro:local Polya Szeg}
    \int_{-L}^L |(v^*)'|^2\leq \int_{-L}^L |v'|^2, \quad\text{ if }v(-L)=v(L)=0.
\end{equation}
This inequality remains true in the local periodic setting, that is, if we assume $v$ to be $2L$-periodic (not necessarily with $v(\pm L)=0$) and we replace $v^*$ by $v^{*\eper}$. To see this, simply replace $v$ by $v-\min_{\R} v$, consider a translate $v(\cdot+ z)$ of $v$ so that $v(\cdot+ z)-\min_{\R} v$ vanishes at $-L$ and $L$, and then apply \eqref{gy:intro:local Polya Szeg}.  

Instead, in the nonlocal case, the study of periodic versions of \eqref{gy:intro:local Polya Szeg} becomes nontrivial. Indeed, the nonlocal semi-norm $[u]_K$ feels the changes after rearrangement in all other periods besides $(-L,L)$. This will have a nontrivial impact when localizing the problem to $(-L,L)$. Indeed, in \cite{Li Wang} it was shown that the Gagliardo $W^{s,2}(-L,L)$ semi-norm, 
\begin{equation}
\label{eq:intro exact Gagl}
[u]_{W^{s,2}(-L,L)}:=\Big(\int_{-L}^L\! dx\int_{-L}^L\!  dy\,|u(x)-u(y)|^2 |x-y|^{-1-2s}\Big)^{1/2},
\end{equation}
may increase, after symmetric decreasing rearrangement, for some smooth functions $u$ with compact support in $(-L,L)$. Note, however,  that our semi-norm $[u]_K$ differs from this last semi-norm in its set of integration $(-L,L)\times\R$.

A possible analogue to the rearrangement inequality \eqref{gy:intro:local Polya Szeg} for the fractional perimeter functional on periodic sets was raised as an open problem by D\'avila, del Pino, Dipierro, and Valdinoci in~\cite{Davila et alt Delaunay}. We solved it affirmatively in~\cite{Cabre Csato Mas delaunay}. Our proof was based on a Riesz rearrangement inequality on the circle $\Sph^1,$ established independently in 1976 by Baernstein and Taylor~\cite{Baernstein Taylor} and by Friedberg and Luttinger \cite{FriedLutt}. It is stated below in Theorem~\ref{thm:gy:sharp Friedberg Luttinger}.\footnote{Theorem~\ref{thm:gy:sharp Friedberg Luttinger} will lead to a periodic rearrangement inequality for quantities integrated in $(-L,L)\times(-L,L)$, as in \eqref{eq:intro exact Gagl}, but with $ |x-y|^{-1-2s}$ replaced by some kernels $\overline K (|x-y|)$ which are $2L$-periodic in $x$ for all given~$y$.
} 
We then realized that similar ideas would work for equations involving the fractional Laplacian instead of the more involved nonlocal mean curvature operator, giving rise to the current paper.

The following \Cref{thm rearrangement} establishes the analogue to the rearrangement inequality \eqref{gy:intro:local Polya Szeg} for the nonlocal semi-norm $[\cdot]_K$ on $2L$-periodic functions and with respect to the periodic symmetric decreasing rearrangement. 
Our result applies to three different classes of kernels $K$. The first one is given by those kernels $K$ which are convex. This is an interesting class that contains the kernel of the fractional Laplacian $K(t)=c_s t^{-1-2s}$. However, this class does not allow to conclude symmetry for the nonlocal Delaunay cylinders in~\cite{Cabre Csato Mas delaunay}, where we had to deal with the nonconvex kernel 
\begin{equation}
\label{eq:intro Del kernel}
K(t)=(t^2+a^2)^{-(n+s)/2}  \qquad\text{  for some $a> 0$ and $n\geq 2$}. 
\end{equation}
Our second class\footnote{In~\Cref{section:appendix rearrangement.kernels} we will show that none of these two classes contains the other.
} ---which includes \eqref{eq:intro Del kernel} and also the kernel of the fractional Laplacian--- are those kernels $K$ for which $\tau>0\mapsto K(\tau^{1/2})$ 
is a completely monotonic function. Let us recall this notion.

\begin{definition}\label{def compl mon rearrangement}
{\rm
A function $J:(0,+\infty)\to\R$ is {\em completely monotonic} if
$J$ is infinitely differentiable and satisfies
$$(-1)^k\frac{d^k}{d\tau^k}J(\tau)\geq0
\qquad\text{for all $k\geq0$ and all $\tau>0$.}
$$
}
\end{definition}

Clearly $K(\tau^{1/2})$ is completely monotonic in $\tau>0$ when $K$ is either the kernel \eqref{eq:intro Del kernel} or the one of the fractional Laplacian, since we have, respectively, $J(\tau)=(\tau+a^2)^{-(n+s)/2}$ and $J(\tau)=c_s \tau^{-(1+2s)/2}$  in \Cref{def compl mon rearrangement}.

We can now state our main result.

\begin{theorem}\label{thm rearrangement}
Let $0<s<1$, $K:(0,+\infty)\to[0,+\infty)$ satisfy the upper bound  \eqref{defi laplacian_K growth1}, $L>0$, and $[\cdot]_K$ be defined by \eqref{eq:intro per Gagl seminorm 1}. 

\begin{itemize}
\item[$(a)$] Assume that one of the following holds:
\begin{itemize}
\item[$(i)$] $K$ is convex.
\item[$(ii)$] The function $\tau>0\mapsto K(\tau^{1/2})$ 
is completely monotonic.  
\item[$(iii)$] $K$ is nonincreasing in $(0,L)$ and vanishes in $[L,+\infty)$.
\end{itemize}
Then, 
$[u^{*\eper}]_K\leq [u]_K$ for every $2L$-periodic function 
$u:\R\to\R$ such that $u\in L^2(-L,L)$.

\item[$(b)$] Assume that  $u$ is $2L$-periodic, belongs to $L^2(-L,L)$, satisfies
$[u^{*\eper}]_K= [u]_K<+\infty$, and that one of the following holds:

\begin{itemize}
\item[$(i)'$] $K$ is convex in $(0,+\infty)$ and strictly convex\footnote{\label{foot:str-convex}By $K$ being strictly convex in an interval $I$ we mean that $K(\lambda t_1 + (1-\lambda) t_2) < \lambda K(t_1) + (1-\lambda) K(t_2)$ for all $t_1\not = t_2$ in $I$ and all $\lambda\in (0,1)$. Thus, in case $K$ were $C^2$, this is more general than the condition $K''>0$ at all points in $I$. 
}
in $(c,+\infty)$ for some $c\geq 0$.
\item[$(ii)'$] $K$ is as in $(ii)$ and $K$ is not identically zero. 
\item[$(iii)'$]$K$ is decreasing in $(0,L)$ and vanishes in $[L,+\infty)$.
\end{itemize}
Then, for some $z\in\R$, we have that either $u= u^{*\eper}(\cdot+z)$ or $u=-u^{*\eper}(\cdot+z)$.
\end{itemize} 
\end{theorem}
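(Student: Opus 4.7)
The plan is to fold the outer integral back to $(-L,L)$ via the $2L$-periodicity of $u$, identify the resulting periodized kernel as symmetric decreasing on $\Sph^1$, and reduce to the Baernstein-Taylor/Friedberg-Luttinger Riesz inequality on the circle (Theorem~\ref{thm:gy:sharp Friedberg Luttinger}). First, since $u^{*\eper}=|u|^{*\eper}$ and $||u(x)|-|u(y)||\leq |u(x)-u(y)|$, one reduces to the case $u\geq 0$. Then, by $2L$-periodicity of $u$,
$$[u]_K^2=\frac{1}{2}\int_{-L}^L\int_{-L}^L |u(x)-u(y)|^2\,\overline K(x-y)\,dx\,dy,\qquad \overline K(\xi):=\sum_{k\in\Z}K(|\xi+2Lk|),$$
with $\overline K$ even and $2L$-periodic, the series converging off the lattice $2L\Z$ by~\eqref{defi laplacian_K growth1}.

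The key analytic step will be to show that, in each of cases (i)--(iii), $\overline K$ is nonincreasing on $(0,L)$, so that $\overline K=\overline K^{*\eper}$ on $\Sph^1$. Case (iii) is immediate as $\overline K(\xi)=K(\xi)$ there. For case (i), I pair terms symmetrically about the midpoints $L(2k+1)$: setting $\psi_k(\xi):=K(\xi+2Lk)+K(2L(k+1)-\xi)$ for $k\geq 0$, convexity of $K$ gives $\psi_k'(\xi)=K'(\xi+2Lk)-K'(2L(k+1)-\xi)\leq 0$ on $(0,L)$, since $\xi+2Lk<2L(k+1)-\xi$ there, and summing over $k\geq 0$ gives the monotonicity of $\overline K$. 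For case (ii), Bernstein's theorem writes $K(t)=\int_0^\infty e^{-\sigma t^2}\,d\mu(\sigma)$ for a positive Borel measure $\mu$, hence $\overline K(\xi)=\int_0^\infty \Theta_\sigma(\xi)\,d\mu(\sigma)$ with $\Theta_\sigma(\xi):=\sum_k e^{-\sigma(\xi+2Lk)^2}$; by Poisson summation $\Theta_\sigma$ is, up to a positive prefactor, the Jacobi theta function $\vartheta_3(\xi/(2L),q)$ with $q=e^{-\pi^2/(4L^2\sigma)}\in(0,1)$, and the Jacobi triple product
$$\vartheta_3(z,q)=\prod_{n\geq 1}(1-q^{2n})\bigl(1+2q^{2n-1}\cos(2\pi z)+q^{4n-2}\bigr)$$
exhibits each $z$-dependent factor as a strictly decreasing function of $|z|$ on $(0,1/2)$ (being affine in $\cos(2\pi z)$ with positive coefficient). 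Hence $\Theta_\sigma$, and so $\overline K$, is (strictly) decreasing on $(0,L)$.

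Granting $\overline K=\overline K^{*\eper}$, part (a) will follow from BT/FL on $\Sph^1$ via a standard truncation. Setting $\overline K_M:=\min(\overline K,M)$---bounded, symmetric decreasing, and integrable on $(-L,L)$---and expanding the square gives
$$\iint |u(x)-u(y)|^2\overline K_M(x-y)\,dxdy=2\|u\|_{L^2}^2\int_{-L}^L\overline K_M-2\iint u(x)u(y)\overline K_M(x-y)\,dxdy,$$
where $\iint$ denotes the double integral over $[-L,L]^2$. The first summand on the right is invariant under $u\mapsto u^{*\eper}$ by equimeasurability, while BT/FL on $\Sph^1$ applied with $f=g=u$ and symmetric-decreasing $h=\overline K_M$ shows that the second summand cannot decrease under $u\mapsto u^{*\eper}$. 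Hence $\iint|u^{*\eper}(x)-u^{*\eper}(y)|^2\overline K_M\leq \iint|u(x)-u(y)|^2\overline K_M$, and monotone convergence as $M\to\infty$ yields $[u^{*\eper}]_K\leq[u]_K$.

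For part (b), the strictness hypotheses force $\overline K$ to be strictly decreasing on $(0,L)$: in (i'), strict convexity of $K$ on $(c,+\infty)$ makes $\psi_k$ strictly decreasing for every $k$ with $2Lk>c$; in (ii'), $K\not\equiv 0$ together with the growth bound forces $\mu$ to have mass in $(0,\infty)$, and each $\Theta_\sigma$ is strictly decreasing on $(0,L)$; (iii') is direct. The equality $[u^{*\eper}]_K=[u]_K<\infty$, combined with the sharp equality characterization in BT/FL on $\Sph^1$, then forces $u=u^{*\eper}(\cdot+z)$ for some $z\in\R$; moreover, the initial reduction to $|u|$ is an equality only when $u$ has constant sign, which produces the $\pm$ in the final statement. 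The main obstacle I foresee is precisely this last step: upgrading the pair-by-pair BT/FL equality (obtained at each truncation level $M$) to a \emph{common} translation for all superlevel sets of $u$, and carrying out the sign analysis required for the $\pm$ conclusion.
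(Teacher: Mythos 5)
Your architecture coincides with the paper's: reduce to $u\geq 0$, fold the outer integral into the periodized kernel $\overline K$, verify that $\overline K$ is nonincreasing on $(0,L)$ in each case, and invoke Theorem~\ref{thm:gy:sharp Friedberg Luttinger}. Two ingredients differ harmlessly. You truncate by capping the periodized kernel, $\overline K_M=\min(\overline K,M)$, whereas the paper truncates $K$ itself near the origin (tangent line in case $(i)$, $K(\sqrt{t^2+1/i^2})$ in case $(ii)$, capping in case $(iii)$); and in case $(ii)$ you prove the monotonicity of $\Theta_\sigma$ via Poisson summation and the Jacobi triple product, where the paper quotes the monotonicity of the periodic heat kernel. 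Both substitutions work. One small technical point: in case $(i)$ you differentiate $K$, but a convex kernel need not be differentiable; use one-sided derivatives or, as the paper does, the four-point inequality $K(b)+K(c)\leq K(a)+K(d)$ for $a<b<c<d$ with $a+d=b+c$.

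The genuine gap is the one you flag at the end, but it is not where you locate it. There is no ``common translation for all superlevel sets'' problem: the equality case of Theorem~\ref{thm:gy:sharp Friedberg Luttinger} is stated at the level of the functions themselves and delivers a single $z$ with $u=u^{*\eper}(\cdot+z)$ once you have equality for one fixed bounded, strictly decreasing kernel. The real problem is that from $A_M\leq B_M$ for every $M$ and $\lim_M A_M=\lim_M B_M<+\infty$ one cannot conclude $A_M=B_M$ for any fixed $M$; so the hypothesis $[u^{*\eper}]_K=[u]_K$ does not, as written, give you equality at any truncation level where BT/FL is applicable. The paper closes this by a splitting argument: decompose $K=K_1+M$ where $K_1$ is the bounded truncation and the remainder $M:=K-K_1$ \emph{also} satisfies the hypotheses of part $(a)$. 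Then $[u]_K^2$ splits as a sum of two energies, each individually nonincreasing under rearrangement, so equality of the sum forces equality of the $K_1$-energy, and the equality case of BT/FL applies to the single bounded kernel $K_1$, whose periodization is strictly decreasing. The same device is available with your capping, since $\overline K-\overline K_M=(\overline K-M)_+$ is again even, $2L$-periodic, and nonincreasing on $(0,L)$, hence satisfies its own Riesz inequality; but this step must be stated, not merely hoped for. Finally, the sign analysis requires more than the one-line remark you give: one must argue that if $u$ changes sign on sets of positive measure within distance less than $L$ of each other, then $[|u|]_K<[u]_K$ (using $K>0$ on $(0,L)$ in all three strict cases), contradicting $[u^{*\eper}]_K=[|u|^{*\eper}]_K\leq[|u|]_K$.
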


Case $(iii)$ will apply if the kernel $K$ is nonincreasing and has compact support and, at the same time, the period is large enough.

Regarding the class $(ii)$, a characterization of complete monotonicity is given by Bernstein's theorem; see \cite{Ber_monotonic}, 
\cite[Theorem 12b in Chapter IV]{Wid_monotonic}, and \cite[Section 14.3]{Lax}. Roughly speaking, it states that a function is completely monotonic if and only if it is the Laplace transform of a nonnegative measure. In the context of \Cref{thm rearrangement}, this characterization reads as follows: condition $(ii)$ is equivalent to the existence of a nondecreasing function $h:[0,+\infty)\to\R$ such that, if $\mu$ denotes the Borel measure defined by $\mu((a,b)):=h(b)-h(a)$ for all $0\leq a\leq b<+\infty$, the kernel $K$ is of the form
\begin{equation}\label{kernel Laplace transform}
K(t)=\int_0^{+\infty}d\mu(r)\,e^{-t^2r}\quad \text{for all $t>0$,}
\end{equation}  
and the integral in \eqref{kernel Laplace transform} converges for all $t>0$.\footnote{In all our results, and also examples in \Cref{section:appendix rearrangement.kernels}, we will only use a special case of \eqref{kernel Laplace transform}, namely, when the measure is of the form $\mu(A)=\int_A dr\,\kappa(r)$ for some $\kappa:(0,+\infty)\to[0,+\infty)$ measurable (and not identically zero). That is, $\mu$ will be absolutely continuous with respect to the Lebesgue measure.} 
In \Cref{remark:corrv15 remark 6.2} we will compute the measure~$\mu$ for the kernel $K$ in \eqref{eq:intro Del kernel}
and for the kernel of  the fractional Laplacian.

This representation of $K$ by means of the Laplace transform is the key ingredient to prove \Cref{thm rearrangement} for the class $(ii)$. 
For this, we write the Lagrangian  $[\cdot]_K$ in \eqref{eq:intro per Gagl seminorm 1} as a double integral over $(-L,L)\times (-L,L)$ with respect to the $2L$-periodic kernel
$$
\overline K(t):=\sum_{k\in\Z}K(|t+2kL|) \qquad \text{for $t\in \R$}.
$$
In order to apply the Riesz rearrangement inequality on the circle, $\overline K$ must be nonincreasing in $t\in (0,L)$. This is a nontrivial question, for instance, for the nonlocal mean curvature kernel \eqref{eq:intro Del kernel} which appeared in~\cite{Cabre Csato Mas delaunay}. In that paper, we had the idea of expressing it in terms of a Laplace transform, and this reduced the problem to verify the monotonicity of the classical periodic heat kernel. We then realized that this method works for all kernels which arise from the Laplace transform of a nonnegative measure. This led us to a new class for us: kernels such that $K(\sqrt{\cdot})$ is completely monotonic. Then, with the keywords ``completely monotonic'' and ``fractional'' at hand, we found that in independent works from ours, Claassen and Johnson \cite{Claassen Johnson} and Bruell and Pei \cite{BruellPei} had also used the Laplace transform to prove symmetry and monotonicity properties of periodic solutions associated to standing or travelling waves of some nonlocal dispersive equations.\footnote{In~\Cref{intro:fourier mult} we will explain the relation between our semilinear elliptic equations and some dispersive equations.}

Indeed, for the particular case of the fractional Laplacian, part $(a)$ of \Cref{thm rearrangement} is also proved by Claassen and Johnson \cite{Claassen Johnson}. That is, they showed that if $K(t)=t^{-1-2s}$ then $[u^{*\eper}]_K\leq [u]_K$ for 
$2L$-periodic functions $u$. Their proof follows the same lines as ours for \Cref{thm rearrangement} $(ii)$. However, they do not cover the case of equality described in part $(b)$ of \Cref{thm rearrangement}, which is the key point to ensure that all constrained minimizers must be symmetric and monotone. Also, their arguments are more involved than ours since they consider the kernel for the fractional heat equation. In addition, for the case of the fractional Laplacian, our proof of the P\'olya-Szeg\H o type inequality for  convex kernels ---i.e., our proof of \Cref{thm rearrangement} $(i)$--- applies and is much simpler.

A class of kernels which contains the cases $(i)$, $(ii)$, and $(iii)$ is the one of nonincreasing kernels ---note that, in case $(i)$, convexity and the decay estimate \eqref{defi laplacian_K growth1} lead to nonincreasing monotonicity. Thus, it is tempting to think that Theorem \ref{thm rearrangement} could hold for all nonincreasing kernels. This is however not true, as we show in \Cref{section:appendix rearrangement.kernels} with a simple example.

\Cref{thm rearrangement} leads to the evenness and monotonicity of every constrained minimizer of the Lagrangian functional $\EE_{\LL_K}$. This is the content of the following result.

\begin{corollary}\label{coro constrained minim lagrangian2}

Let $L>0$, $0<s<1$, $K$ satisfy the upper bound \eqref{defi laplacian_K growth1}, 
$G\in C^1(\R)$, $\widetilde{G}\in C^1(\R)$, $c\in\R$, and set $g:=G'$, $\widetilde{g}:=\widetilde{G}'$.
Let $\EE_{\LL_K}$ be as in \eqref{lagrangian rearrange mod u}. Assume that $K$ satisfies one of the three conditions $(i)',$ $(ii)'$, or $(iii)'$ of \Cref{thm rearrangement} $(b)$.

If $u\in L^\infty(\R)$ is $2L$-periodic and minimizes $\EE_{\LL_K}$ among $2L$-periodic functions $v\in L^\infty(\R)$ satisfying
$\int_{-L}^L \widetilde{G}(v)=c$, then, up to a translation in the variable $x\in\R$, $u$ is even and nonincreasing in $(0,L)$.

Assume, in addition, that $K$ satisfies the lower bound \eqref{defi laplacian_K growth2} and that $g$ and $\widetilde g$ belong to $C^{1+\delta}(\R)$ for some $\delta>0$. Then, $u'$ is a continuous function in $\R$ and, up to a translation in the variable $x\in\R$, $u$ is even and $u'<0$ in $(0,L)$,  unless $u$ is constant. In particular,  $u$  is an even function in $\R$ which is decreasing in $(0,L)$.
Thus, $u$ has only two critical points in~$[0,L]$, its maximum at $x=0$ and its minimum at $x=L$.
\end{corollary}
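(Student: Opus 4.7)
The plan is to combine \Cref{thm rearrangement} with a standard periodic rearrangement argument (as for \Cref{thm: frac Laplacian Intro}$(a)$) to obtain the evenness and monotonicity, and then to use the Euler--Lagrange equation together with a nonlocal strong-maximum-principle-type argument for its linearization to obtain the strict monotonicity.

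For the first assertion, set $m := \essinf_\R u$ and $v := u - m \geq 0$ a.e. Since $[\,\cdot\,]_K$ is invariant under additive constants and $v \geq 0$, the function $w := v^{*\eper} + m$ is equimeasurable with $u$ on $(-L, L)$ and satisfies $[w]_K = [v^{*\eper}]_K \leq [v]_K = [u]_K$ by \Cref{thm rearrangement}$(a)$. Thus $w$ is admissible (it has the same value of $\int_{-L}^L \widetilde G$), has the same $\int_{-L}^L G$ as $u$, and $\EE_{\LL_K}(w) \leq \EE_{\LL_K}(u)$. Minimality forces $[v^{*\eper}]_K = [v]_K$, so \Cref{thm rearrangement}$(b)$ provides $z \in \R$ with $v = v^{*\eper}(\cdot + z)$ or $v = -v^{*\eper}(\cdot + z)$; the second alternative forces $v \equiv 0$ (so $u$ is constant) since both $v$ and $v^{*\eper}$ are nonnegative. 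In either case, $u(\,\cdot-z\,)$ is even and nonincreasing on $(0, L)$.

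For the second assertion, a standard constrained-minimization argument produces $\lambda \in \R$ such that $\LL_K u = g(u) + \lambda \widetilde g(u)$ in $\R$. Under \eqref{defi laplacian_K growth1}--\eqref{defi laplacian_K growth2} and $g,\widetilde g \in C^{1+\delta}$, Schauder-type interior estimates for $\LL_K$ and a bootstrap (as in the proof of \Cref{thm: frac Laplacian Intro}$(c)$) upgrade $u$ to $C^{1,\alpha}(\R)$ for some $\alpha>0$, and in fact give enough regularity for $\LL_K$ to act pointwise on $u'$. Assume, after translation, that $u$ is even and nonincreasing on $[0, L]$: then $v := u'$ is continuous, $2L$-periodic, odd, $v(0) = v(L) = 0$, and $v \leq 0$ on $(0, L)$. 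Translation invariance of $\LL_K$ (apply the equation to $u(\cdot + h)$ and differentiate at $h = 0$) yields $\LL_K v = F'(u)\,v$ in $\R$ with $F := g + \lambda \widetilde g \in C^{1+\delta}$. Suppose for contradiction that $u$ is non-constant and $v(x_0) = 0$ for some $x_0 \in (0, L)$; then, using $v(x_0) = 0$,
\begin{equation*}
0 \,=\, \LL_K v(x_0) \,=\, -\int_\R v(y)\,K(|x_0 - y|)\,dy \,=\, -\int_{-L}^L v(y)\,\overline K(x_0 - y)\,dy,
\end{equation*}
where $\overline K(t) := \sum_{k \in \Z} K(|t + 2kL|)$ is the $2L$-periodic symmetrization of $K$. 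Folding by the oddness of $v$ yields
\begin{equation*}
\int_0^L v(y)\,\bigl[\overline K(x_0 - y) - \overline K(x_0 + y)\bigr]\,dy \,=\, 0.
\end{equation*}
Under each of $(i)'$, $(ii)'$, $(iii)'$, $\overline K$ is strictly decreasing on $(0, L)$; combined with its evenness and $2L$-periodicity, a short circle-distance computation on $\R/2L\Z$ gives $\overline K(x_0 - y) > \overline K(x_0 + y)$ for all $x_0, y \in (0, L)$. With $v \leq 0$ on $(0, L)$, the integrand above is pointwise $\leq 0$, and its integral vanishing forces $v \equiv 0$ on $(0, L)$, hence on $\R$ by oddness and periodicity, so $u$ is constant, a contradiction.

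The main obstacle is the strict monotonicity of $\overline K$ on $(0, L)$ under each of the three hypotheses: it is immediate under $(iii)'$; under $(ii)'$ it follows from the Laplace-transform representation of $K$ combined with the strict monotonicity on $(0, L)$ of each Jacobi theta sum $\sum_k e^{-(t + 2kL)^2 r}$; under $(i)'$ it requires a careful pairwise comparison argument using convexity, strict convexity at infinity, and the decay \eqref{defi laplacian_K growth1}. This strictness is precisely what powers the equality case in \Cref{thm rearrangement}$(b)$, so it should be extracted from its proof. A secondary technicality is the higher regularity of $u$ needed so that $\LL_K v(x_0)$ can be evaluated pointwise after $v(x_0) = 0$, which relies on the lower bound \eqref{defi laplacian_K growth2}.
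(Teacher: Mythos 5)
Your argument is correct and follows the paper's proof in substance: the first part is the same rearrangement-plus-additive-shift argument based on \Cref{thm rearrangement}, and the second part derives the Euler--Lagrange equation with a Lagrange multiplier, bootstraps regularity via \Cref{fthm:Holder reg periodic}, differentiates, and your inline folding argument with $\overline K$ is precisely the paper's strong maximum principle (\Cref{theorem:max principle}) together with its proof. The one point to treat with care, which the paper's proof of \Cref{theorem:max principle} handles explicitly when $s\geq 1/2$, is that rewriting $\LL_K v(x_0)$ as $-\int_\R v(y)\,K(|x_0-y|)\,dy$ requires a principal-value argument rather than absolute convergence.
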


Within the proof of  the corollary, we will see that if we further assume $u\geq 0$, then the condition $u\in L^{\infty}(\R)$ is not necessary in the first conclusion of the result, i.e., $u$ is even and nonincreasing in $(0,L)$ after a translation. Thus, we may consider constrained minimizers in classes of functions which are not necessarily bounded, such as the class $L^2(-L,L)$ for instance. 

In Remark~\ref{remark: gen const} we will point out that \Cref{coro constrained minim lagrangian2} also holds for minimizers with other type of constraints. For instance, a constraint on the quasi-norm of $u$ in a Lorentz space.

To prove the last statements of \Cref{coro constrained minim lagrangian2} and of \Cref{thm: frac Laplacian Intro} on strict monotonicity, we need to develop a strong maximum principle for periodic solutions to integro-differential equations. To our knowledge, this is the first time that such a result is proven, even for the fractional Laplacian. 

\begin{theorem}
\label{theorem:max principle}
Let $0<s<1$ and $L>0$. Assume that
$K:(0,+\infty)\to[0,+\infty)$ satisfies the upper bound  \eqref{defi laplacian_K growth1} and that $K$ satisfies one of the three conditions  $(i)'$, $ (ii)'$, or $ (iii)'$ of \Cref{thm rearrangement} $(b)$. Suppose that $c\in L^{\infty}(\R)$ and that $v\in C^{2s+\gamma}(\R)$ for some $\gamma>0$. 

If $v$ is odd, $2L$-periodic, and solves
\begin{equation}
  \left\{\begin{array}{ll}\LL_K v\leq c(x)v & \quad\text{ in }(0,L),
  \\
  v\leq 0 & \quad\text{ in }(0,L),
  \end{array}
  \right.
\end{equation}
then either $v<0$ in $(0,L)$ or $v$ is identically equal to $0$. 
\end{theorem}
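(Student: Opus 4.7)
The plan is to argue by contradiction: assume $v\not\equiv 0$ and suppose that $v(x_0)=0$ at some interior point $x_0\in(0,L)$. Since $v\leq 0$ on $(0,L)$, the point $x_0$ is an interior maximum of $v$ on this interval, and the differential inequality reads $\LL_K v(x_0)\leq c(x_0)v(x_0)=0$. The goal is to exploit the oddness of $v$ together with the strict decay of an appropriate periodized kernel to force $\LL_K v(x_0)\geq 0$, with equality occurring only if $v$ vanishes identically on $(0,L)$.

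First I would rewrite the nonlocal operator in a form adapted to the $2L$-periodicity. Grouping the terms across periods gives
\begin{equation*}
\LL_K v(x_0)=\text{P.V.}\int_{-L}^L (v(x_0)-v(y))\,\overline{K}(x_0-y)\,dy,\qquad \overline{K}(t):=\sum_{k\in\Z}K(|t+2kL|),
\end{equation*}
which, since $v(x_0)=0$, reduces to $-\text{P.V.}\int_{-L}^L v(y)\,\overline{K}(x_0-y)\,dy$. Splitting the integral at $y=0$, performing the change of variables $y\mapsto-y$ on $(-L,0)$, and invoking the oddness $v(-y)=-v(y)$, I would arrive at
\begin{equation*}
\LL_K v(x_0)=-\text{P.V.}\int_0^L v(y)\bigl[\overline{K}(x_0-y)-\overline{K}(x_0+y)\bigr]\,dy.
\end{equation*}
The principal value is unproblematic after this manipulation, since the remaining singularity at $y=x_0$ inherits an odd structure from $v\in C^{2s+\gamma}$.

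The key input I would invoke is that, under any of the hypotheses $(i)'$, $(ii)'$, or $(iii)'$ from \Cref{thm rearrangement}$(b)$, the periodized kernel $\overline{K}$ is even and strictly decreasing as a function of the circular distance on $[0,L]$. This is precisely the property already established in the paper to treat the equality case of the periodic rearrangement inequality, so I would import it from there. A short elementary computation then shows that for $x_0,y\in(0,L)$ the circular distance of $x_0-y$ from $0$ is strictly smaller than that of $x_0+y$, hence $\overline{K}(x_0-y)>\overline{K}(x_0+y)$ for a.e.\ $y\in(0,L)$. Since $v(y)\leq 0$ on $(0,L)$, the integrand above is nonpositive, giving $\LL_K v(x_0)\geq 0$. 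Combined with $\LL_K v(x_0)\leq 0$, the integral must vanish, and the strict positivity of the bracket forces $v\equiv 0$ a.e.\ on $(0,L)$; continuity of $v$ upgrades this to every point, and oddness plus $2L$-periodicity then propagate the vanishing to all of $\R$, contradicting $v\not\equiv 0$.

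The main obstacle I foresee is justifying the strict monotonicity of $\overline{K}$ on $(0,L)$ uniformly under the three hypotheses $(i)'$, $(ii)'$, $(iii)'$. Cases $(ii)'$ (via the Laplace-transform representation \eqref{kernel Laplace transform} and the strict decay of the periodic heat kernel in the circular variable) and $(iii)'$ (where the support condition gives $\overline{K}=K$ on $(0,L)$) should be the cleanest, while $(i)'$ requires combining the convexity of $K$, the decay bound \eqref{defi laplacian_K growth1}, and the strict convexity on $(c,+\infty)$ to rule out flat directions in the periodization. A secondary subtlety, controlled by the regularity assumption $v\in C^{2s+\gamma}$, is the careful handling of the principal value and of $\overline{K}$ near $y=x_0$ and near the endpoints $y\in\{0,L\}$, which I would address by a standard symmetric-truncation argument.
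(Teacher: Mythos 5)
Your proposal is correct and follows essentially the same route as the paper: contradiction at an interior zero $x_0$, folding the integral onto $(0,L)$ via oddness, and concluding from the strict monotonicity of the periodized kernel on $(0,L)$ (established in the equality case of \Cref{thm rearrangement}), with the same symmetric-truncation device for the principal value when $s\geq 1/2$. The only cosmetic difference is that you phrase all three cases uniformly in terms of $\overline{K}$, whereas the paper runs case $(ii)'$ through the Laplace-transform representation and the periodic heat kernel $\Phi(\cdot,r)$ before noting that $(i)'$ and $(iii)'$ follow by substituting $\overline{K}$ for $\Phi$.
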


We have a simple direct proof of this lemma, as that of the maximum principle for the fractional Laplacian without a periodicity condition, when the kernel $K$ is convex. Instead, for the class of nonconvex kernels with $K(\tau^{1/2})$ being completely monotonic in $\tau$, the proof uses the Laplace transform as when proving \Cref{thm rearrangement}.


\subsection{Regularity}

In this paper we also study the H\"older regularity of bounded  $2L$-periodic distributional solutions to 
$\LL_K u=f(u)$ in $\R$ when $f\in C^{\beta}(\R)$ for some $\beta>0$. Our regularity results are employed to establish some parts of the results stated above. By {\em  $2L$-periodic distributional solution} we mean that $u$ is $2L$-periodic, belongs to 
$L^1(-L,L)$, and satisfies 
\begin{equation}\label{def distrib.sol integro}
\intL u\,\LL_K\psi=\intL f(u) \psi
\end{equation}
for every $2L$-periodic and smooth function $\psi$.

The boundedness of solutions $u\in L^2(-L,L)$ such that $[u]_K$ is finite is guaranteed either if ~$s>1/2$~ or under standard subcritical-type assumptions on the nonlinearity $f$; see \Cref{L infty estimate}.

Recall that, with $f\in C^{\beta}(\R)$ as above, bounded solutions to $-\Delta u=f(u)$ in $B_1\subset\R^n$ satisfy  $u\in C^{2+\beta}(B_{1/2})$ (if $\beta>0$ is not an integer), a result which is obtained by simply combining Calder\'on-Zygmund and Schauder's estimates. If $\beta=k$ is an integer and $n>1$, then $u$ is $C^{2+k-\epsilon}$ for all $\epsilon>0$, but to prove or disprove that $u$ is always of class $C^{2+k}$ still seems to be an unsettled problem.\footnote{In the case $k=0,$ and thus $f$ is merely continuous, it has been posed as an open question in \cite{Shahgholian 2015} whether all solutions have continuous second derivatives.}

In view of the local result, one would expect that bounded solutions to $\LL_Ku=f(u)$ with $f\in C^{\beta}(\R)$ for some $\beta>0$ are of class $C^{2s+\beta-\epsilon}$ for every $\epsilon>0$ (or even perhaps $C^{2s+\beta}$). 
In the following result we prove that this claim is indeed true when $s\geq(1-\beta)/2$. However, if $0<\beta<1$ and $0<s<(1-\beta)/2$, the best regularity that one can expect for bounded solutions is to be of class $C^{2s/(1-\beta)}$, as explained in the following theorem and subsequent comments. This exponent $2s/(1-\beta)$ is smaller than $2s+\beta$ when $0<s<(1-\beta)/2$, and seems to be new on the literature of fractional semilinear elliptic equations.

We emphasize that the next result remains true also in the nonperiodic setting (i.e., for the interior regularity in the Dirichlet problem) with an additional assumption on the H\"older semi-norm of $K$; see  
\Cref{gy:remark:nonperiodic reg Holder} and the observations right after it.


\begin{theorem}\label{fthm:Holder reg periodic}
Let $K$ satisfy the bounds \eqref{defi laplacian_K growth1} and \eqref{defi laplacian_K growth2} for some $0<s<1$, $f$ belong to $C^\beta(\R)$ for some $\beta>0$, and $u\in L^\infty(\R)$ be a  $2L$-periodic distributional solution to $\LL_K u=f(u)$ in $\R$. 

Then, the following holds:
\begin{itemize}
\item[$(i)$] If $\beta < 1$ and $s<\frac{1-\beta}{2}$, then $u\in C^{\frac{2s}{1-\beta}-\epsilon}(\R)$ for all $\epsilon>0$. Moreover, $\|u\|_{C^{{2s}/(1-\beta)-\epsilon}(\R)}$ is bounded by a constant which depends only on 
$s$, $\beta$, $\epsilon$, $L$, $\lambda$, $\Lambda$,  $\|f\|_{C^\beta(\R)}$, and $\|u\|_{L^\infty(\R)}$.
\item[$(ii)$] If $s\geq\frac{1-\beta}{2}$ (this holds, in particular, if $\beta\geq 1$), then $u\in C^{\beta+2s-\epsilon}(\R)$ for all $\epsilon>0$. Moreover, $\|u\|_{C^{\beta+2s-\epsilon}(\R)}$ is bounded by a constant which depends only on 
$s$, $\beta$, $\epsilon$, $L$, $\lambda$, $\Lambda$, $\|f\|_{C^\beta(\R)}$, and $\|u\|_{L^\infty(\R)}$.
\end{itemize}
\end{theorem}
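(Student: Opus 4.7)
The proof is a bootstrap argument combining (a) an initial De Giorgi--Nash--Moser-type H\"older estimate, (b) an iterative Schauder-type gain of $2s$ derivatives for $\LL_K$ with elliptic kernels in the periodic setting, and (c) the standard composition inequality for $f\circ u$. The two cases in the statement correspond to whether the fixed point of the gain-law $\alpha\mapsto\beta\alpha+2s$ stays below $1$ or crosses it.

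Since $f$ is continuous and $u\in L^\infty(\R)$, $g:=f(u)\in L^\infty(\R)$. Applying the H\"older estimate for distributional solutions of $\LL_K u=g$ with kernels satisfying \eqref{defi laplacian_K growth1}--\eqref{defi laplacian_K growth2} (as in the results of Silvestre, Caffarelli--Silvestre, Kassmann), together with periodicity to globalize the local bound, one obtains $u\in C^{\alpha_0}(\R)$ for some $\alpha_0\in(0,\min(1,2s))$ with norm controlled by $s,\lambda,\Lambda,L,\|u\|_{L^\infty(\R)}$, and $\|f\|_{L^\infty}$. This is the starting regularity of the bootstrap.

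The iteration rests on two estimates. The composition estimate: for $u\in C^\alpha\cap L^\infty$ and $f\in C^\beta$ with $0<\alpha,\beta\leq 1$ one has $[f(u)]_{C^{\alpha\beta}(\R)}\leq [f]_{C^\beta(\R)}[u]_{C^\alpha(\R)}^\beta$, with analogous higher-order counterparts once $u$ enters the $C^{k,\delta}$ scale or once $\beta>1$ (in which case $f$ is Lipschitz and $f\circ u$ inherits the regularity of $u$ up to the ceiling imposed by $\beta$). The Schauder-type estimate for the periodic problem: if $u$ is $2L$-periodic and $\LL_K u=g$ with $g\in C^\gamma(\R)$ and $\gamma+2s\notin\mathbb{Z}$, then $u\in C^{\gamma+2s-\epsilon}(\R)$ for every $\epsilon>0$, quantitatively. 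In the periodic setting this can be derived from the Fourier-series diagonalization of $\LL_K$: its symbol $\lambda_k$ satisfies $\lambda|k|^{2s}\leq\lambda_k\leq C\Lambda|k|^{2s}$, so inversion on nonzero modes produces the gain of $2s$ derivatives in H\"older--Zygmund scales (via Littlewood--Paley). Periodicity is crucial here: it replaces the additional H\"older-smoothness hypothesis on $K$ that is required in the nonperiodic interior Schauder estimate mentioned right after the statement.

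Iterating, if $u\in C^{\alpha_n}$ with $\alpha_n\leq 1$ and $\beta\alpha_n\leq 1$, then $f(u)\in C^{\beta\alpha_n}$ and Schauder gives $u\in C^{\alpha_{n+1}}$ with $\alpha_{n+1}=\beta\alpha_n+2s-\epsilon$. For $\beta<1$ this map is a contraction with fixed point $\alpha^*=2s/(1-\beta)$. When $\alpha^*\leq 1$, equivalently $s\leq(1-\beta)/2$, the iteration converges to $\alpha^*$ and gives $(i)$ on sending $\epsilon\to 0$. Otherwise some $\alpha_n$ crosses $1$; from that point the composition--Schauder machinery applied in the $C^{k,\delta}$ scale bootstraps $u$ further up to the ceiling $\beta+2s-\epsilon$ imposed by the regularity of $f$, yielding $(ii)$ (and for $\beta>1$ one enters this regime almost immediately). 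The main obstacle is establishing the Schauder-type estimate for $\LL_K$ with merely elliptic (non-smooth) kernels in the periodic setting, and justifying the pointwise meaning of $\LL_K u$ at the thresholds $\alpha=2s$ and $\alpha=1$ where the definition for distributional solutions must be upgraded; periodicity is precisely what allows us to dispense with extra H\"older regularity on $K$. A secondary subtlety is propagating quantitative control on $[u]_{C^{\alpha_n}(\R)}$ along the iteration, which follows from the uniform dependence of the Schauder estimate on $s,\lambda,\Lambda,L,\|f\|_{C^\beta(\R)},\|u\|_{L^\infty(\R)}$, none of which change with $n$.
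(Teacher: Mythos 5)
Your overall architecture is the same as the paper's: start from $f(u)\in L^\infty$, get an initial H\"older exponent, then bootstrap with the composition rule $[f(u)]_{C^{\alpha\beta}}\lesssim [u]_{C^\alpha}^\beta$ and a gain-of-$2s$ Schauder step, iterating $\alpha\mapsto \beta\alpha+2s$ towards the fixed point $2s/(1-\beta)$ (case $(i)$) or past $1$ and up to the ceiling $\beta+2s$ (case $(ii)$). The paper runs exactly this iteration, with $\beta_k=\sum_{j\le k}\beta^j$ and $u\in C^{2s\beta_k}$.

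The genuine gap is in your justification of the central Schauder-type lemma. You propose to obtain ``$\LL_K u=g\in C^\gamma$ implies $u\in C^{\gamma+2s-\epsilon}$'' by Fourier-series diagonalization, using only the two-sided symbol bounds $\lambda|k|^{2s}\le \ell_K(\pi k/L)\le \Lambda|k|^{2s}$ together with Littlewood--Paley theory. Size bounds on a multiplier are not enough to make the inverse multiplier bounded on H\"older--Zygmund spaces: one needs Mikhlin--H\"ormander-type control of at least one derivative of $\ell_K(\xi)^{-1}$, i.e.\ $|\partial_\xi \ell_K(\xi)|\lesssim |\xi|^{2s-1}$, and this does not follow from the pointwise bounds \eqref{defi laplacian_K growth1}--\eqref{defi laplacian_K growth2} on a merely measurable kernel $K$ (the formal derivative $\ell_K'(\xi)=2\int_0^{+\infty}dt\,t\sin(\xi t)K(t)$ is not even absolutely convergent for $s\le 1/2$, and without smoothness or monotonicity of $K$ there is no obvious way to extract the needed cancellation). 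The estimate you want is nevertheless true for translation-invariant kernels with two-sided bounds, but it is a nontrivial theorem proved by blow-up/compactness arguments (Ros-Oton--Serra; Dipierro--Ros-Oton--Serra--Valdinoci, Theorem 3.8 quoted as \Cref{fl Lemma bis} in the paper), in the form $\|u\|_{C^{\alpha+2s}(\overline I/2)}\le C(\|u\|_{C^\alpha(\R)}+\|h\|_{C^\alpha(\overline I)})$; the role of periodicity is precisely to convert this local estimate, whose right-hand side involves the global norm $\|u\|_{C^\alpha(\R)}$, into a bound on all of $\R$ by translating. You correctly identified this lemma as the main obstacle and correctly identified what periodicity buys, but the proof you sketch for it does not close. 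A secondary, minor omission: to invoke the local (Dirichlet-type) regularity results in your initial step you must first check that a periodic distributional solution is a distributional solution in the Dirichlet sense on each interval; the paper devotes \Cref{prop:gy:weak per and nonper sol} to this.
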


In our forthcoming work \cite{Csato Mas Holder opt}, it will be shown with an example that in case $(i)$ (and also in the nonperiodic setting) one cannot go beyond $C^{2s/(1-\beta)}$ regularity. This a truly semilinear-fractional effect that does not occur for the linear equation $\LL_K u=h(x)$, and neither for local semilinear equations.


\subsection{Fourier multiplier operators and dispersive equations}
\label{intro:fourier mult}

In this work we also address the variational structure of  equations of the form 
$\LL u = f(u)$, where $\LL$ is a Fourier multiplier operator defined by
\begin{equation}\label{intro:energy dispersive symbol}
\LL u(x):=\sum_{k\in\Z}\ell({\textstyle \frac{\pi k}{L}})u_ke^{\frac{i\pi k}{L}x}
\end{equation}
and $u(x)=\sum_{k\in\Z}u_ke^{\frac{i\pi k}{L}x}$ is the Fourier series expansion of a $2L$-periodic function $u$. Here, $\ell:\R\to\R$ is called the symbol (or multiplier) of $\LL$.
As we will see, this class of operators includes all $\LL_K$ from \eqref{defi laplacian_K} with $K$ satisfying the upper bound \eqref{defi laplacian_K growth1} ---the case $\ell(\xi)=|\xi|^{2s}$ being the fractional Laplacian $(-\Delta)^{s}$.

Equations of the form $\LL u = f(u)$ naturally arise in many physical models. For instance, in the context of wave propagation, the general nonlinear equation
\begin{equation}\label{dispersive wave1}
v_t-\LL (v_x)+g(v)_x=0\qquad\text{for }x\in\R,\,t\geq0
\end{equation}
was considered in \cite{BBM,CB} to describe long-crested, long-wavelength disturbances of small
amplitude propagating in one direction in a dispersive media. Here, 
$v\equiv v(x,t)$ represents and amplitude or a velocity, $x$ is usually associated with distance to some given point in the spatial domain of propagation, $t$ is proportional to the time variable, and the operator $\LL$ acts on the spatial variable. 
When looking for periodic traveling wave solutions of the form
\begin{equation}\label{dispersive wave2}
v(x,t)\equiv u(x-ct)=\sum_{k\in\Z}u_ke^{\frac{i\pi k}{L}(x-ct)},
\end{equation}
where $2L$ is the spatial period and $c$ the velocity of propagation, the dispersive equation \eqref{dispersive wave1} rewrites in terms of $u$ as
$-cu'-\LL (u')+g(u)'=0$.
Then, commuting the derivative with $\LL$, one arrives to the  equation $\LL u = f(u)$ with $f(u)=-cu+g(u)+A$,
where $A$ is a constant of integration. 
The Benjamin-Ono equation  
$(-\Delta)^{s}u=-u+u^p$ mentioned in \Cref{ss subsection1.1} is a concrete example coming from \eqref{dispersive wave1}, as well as
the Benjamin equation $\beta(-\Delta)u-\gamma(-\Delta)^{1/2}u=-u+u^2$ with $\beta>0$ and $\gamma>0$, which is a further generalization of Benjamin-Ono where short and long range diffusion compete against each other. For the generalized Korteweg-de Vries equation 
$v_t+v_{xxx}+g(v)_x=0$, which corresponds to \eqref{dispersive wave1} taking $\LL=-\partial_{xx}$, one gets $-u''=-cu+g(u)+A$. An example coming from a model not covered by \eqref{dispersive wave1} is the one for the Benjamin-Bona-Mahony equation $v_t+v_x+vv_x-v_{xxt}=0$, which leads to $-cu''=(c-1)u-u^2+A$.

The integro-differential equation 
$\LL_K u =f(u)$ considered in this work also falls within the above Fourier framework $\LL u = f(u)$. Indeed, the operators $\LL_K$ defined in \eqref{defi laplacian_K}, with $K$ satisfying the upper bound \eqref{defi laplacian_K growth1}, are multiplier operators with a symbol $\ell=\ell_K$ which can be explicitly written in terms of $K$; see \Cref{eigenfunctions nonlocal}. This follows from the simple fact that the standard Fourier basis are eigenfunctions of $\LL_K$. In turn, the equation 
$\LL_K u =f(u)$ covers some of the examples from wave propagation presented before. In particular, for those examples, \Cref{coro constrained minim lagrangian2} leads to symmetry properties for traveling waves found by a constrained minimization of the associated Lagrangian $\EE_{\LL_K}$ given in \eqref{lagrangian rearrange mod u}. 
This applies, for instance, to the nontrivial constrained minimizers for large enough periods $2L$ found by Chen and Bona~\cite{CB} when the symbol comes from a kernel as the ones described in \Cref{thm rearrangement}~$(b)$. Moreover, our \Cref{coro constrained minim lagrangian2} is, in a sense, the counterpart of the symmetry results found by Bruell and Pei~\cite{BruellPei} for dispersive equations with weak dispersion to the case of strong dispersion, once we restrict ourselves to constrained minimizers. Recall here that in 
\cite{BruellPei}, apart from complete monotonicity and/or homogeneity, the symbol is assumed to have a power-like decay at infinity ---this last property is the one which refers to the words ``weak dispersion''. Hence, their results do not cover integro-differential operators like the fractional Laplacian, for which the symbol has instead power growth at infinity.

The proofs of symmetry and monotonicity presented in this paper, which are based on periodic rearrangement, only work for operators of the form $\LL_K$ and some kernels $K$, but not for general multiplier operators $\LL$. However, the description of the variational structure of $\LL_K u=f(u)$ 
---briefly commented in \Cref{ss:symmetry_intro} and fully developed in \Cref{ss energy semilinear frac lap}--- easily extends to 
$\LL u=f(u)$. This is useful since there are models of interest in the literature (such as the Benjamin equation) which cannot be described solely by using an integro-differential operator.  
That is why we will show in \Cref{ss Energy dispersive} that the Lagrangian expression \eqref{energy defi half oper} generalizes to multiplier operators $\LL$ as in \eqref{intro:energy dispersive symbol}. More precisely, we will show that 
$\LL u=g(u)$ is the Euler-Lagrange equation of the functional
\begin{equation}\label{energy bona}
   \EE_{\LL}(u):=\int_{-L}^L \Big(\frac{1}{2}\,u\,\LL u-G(u)\Big);
\end{equation}
see \eqref{s2 sobol norm statement LK}, \eqref{s2 sobol norm statement}, and \Cref{critical point energy dispersive} for more details. The reader may also look at \Cref{ss Energy dispersive} for the precise assumptions required on the symbol $\ell$ of $\LL$ in order to construct the Lagrangian \eqref{energy bona}.

To finish this subsection, let us mention that the Fourier series approach also leads to a nice relation between the integro-differential expressions for the half-Laplacian on $\R$ and on $\Sph^1$. We give the details of this in~\Cref{section:appendix half.harm}.


\subsection*{Plan of the paper}
\begin{itemize}

\item In \Cref{ss energy semilinear frac lap} we describe the variational formulation for linear and semilinear equations. We define here the notions of periodic weak and distributional solutions.

\item In \Cref{section:Fourier} we generalize the variational formulation of \Cref{ss energy semilinear frac lap} to multiplier operators for Fourier series, introduced above in \Cref{intro:fourier mult}.

\item In \Cref{ss:symmetry} we show a nonlocal P\'olya-Szeg\H o inequality for the periodic symmetric decreasing rearrangement, \Cref{thm rearrangement}.

\item In \Cref{section:max principle} we establish a strong maximum principle for periodic solutions of integro-differential equations,  \Cref{theorem:max principle}.

\item In \Cref{section:proof of thm 1.1 and corollary constr. min}
we provide the proofs of \Cref{thm: frac Laplacian Intro} and \Cref{coro constrained minim lagrangian2}.

\item In \Cref{s:Linfty_estimates} we prove $L^\infty(\R)$ estimates for weak periodic solutions.

\item In \Cref{ss_Calpha_reg} we establish the $C^\alpha(\R)$ estimates of \Cref{fthm:Holder reg periodic} for bounded periodic distributional solutions.

\item In \Cref{section:appendix half.harm} we investigate the connection between the half-Laplacian acting on periodic functions in $\R$ and on functions defined in $\Sph^1$.

\item In \Cref{section:appendix rearrangement.kernels} we discuss on the classes of kernels considered in \Cref{thm rearrangement}.

\item In \Cref{section:appendix:minimizers const} we establish that every stable periodic solution is identically constant.

\end{itemize}



\section{Variational formulation for nonlocal periodic problems}\label{ss energy semilinear frac lap}

In this section we address the variational approach for periodic solutions to the linear equation $\LL_K u=h$ in $\R$ and also to the semilinear equation $\LL_K u=f(u)$ in $\R$. In the linear case we assume the function $h:\R\to\R$ to be periodic.

As mentioned in 
 \Cref{ss:symmetry_intro} of the Introduction, the well-known Lagrangian  \eqref{intro:eq:Lagran frac lap nonper} for the Dirichlet problem  for the fractional Laplacian differs from the Lagrangian \eqref{energy_fac_lap} for the periodic problem. The same situation occurs for the general integro-differential operator $\LL_Ku$ defined in \eqref{defi laplacian_K}.
In this section we will show that $\LL_K u=g(u)$ in $\R$ is the Euler-Lagrange equation for critical points of the functional (recall that $g=G'$)
\begin{equation}\label{energy dirichlet periodic}
\EE_{\LL_K}(u)
  =\frac{1}{4}\int_{-L}^{L}\!dx\int_{\R}dy\, {|u(x)-u(y)|^2}K(|x-y|)
- \int_{-L}^{L} G(u)
\end{equation}
acting on $2L$-periodic functions $u$. To show this, we will express the functional explicitly in terms of $\LL_K$. By a basic integration by parts formula for $\LL_K$ acting on periodic functions, we will indeed see that \eqref{energy dirichlet periodic} rewrites as 
\begin{equation}\label{energy defi half oper}
\EE_{\LL_K}(u)=\int_{-L}^L \Big(\frac{1}{2}\,{u}\,\LL_K u-G(u)\Big).
\end{equation}

In view of \eqref{energy dirichlet periodic}  it is now natural to define an associated semi-norm and inner product as follows.
Given a $2L$-periodic function $u:\R\to\R$,  we set
\begin{equation}\label{intro:def seminorm K}
\begin{split}
[u]_K:=\Big(\frac{1}{2}\int_{-L}^L\! dx\int_\R dy\,|u(x)-u(y)|^2 K(|x-y|)\Big)^{1/2}.
\end{split}
\end{equation}
In addition, given another $2L$-periodic function $v$, assuming that $[u]_K<+\infty$ and $[v]_K<+\infty$, we define the symmetric bilinear form
\begin{equation}
\begin{split}
\langle u,v\rangle_K:=\frac{1}{2}\intL dx\!\int_\R dy\,
{\big(u(x)-u(y)\big)\big(v(x)-v(y)\big)}K(|x-y|).
\end{split}
\end{equation}

This motivates the following weak formulation for periodic solutions to the  equation $\LL_{K} u=h$ in $\R$. Given a $2L$-periodic function $h$, with
$h\in L^2(-L,L)$, we say that a $2L$-periodic function 
$u:\R\to\R$ is a {\em  periodic weak solution to $\LL_K u=h$ in 
$\R$} if $u\in L^2(-L,L)$, $[u]_K<+\infty$, and
\begin{equation}\label{truly weak form eq}
  \langle u,v\rangle_K=\intL\!h v
\end{equation}
for every $2L$-periodic function $v$ with $v\in L^2(-L,L)$ and $[v]_K<+\infty$. In \Cref{fl l1} we will show that, if $u$ is smooth enough and satisfies \eqref{truly weak form eq} for all such functions $v$, then
\begin{equation}
\label{eq:after def of period weak sol}\intL (\LL_K u-h)\psi
  =\langle u,\psi\rangle_K-\intL\!h \psi=0
\end{equation}
for every $2L$-periodic and smooth function $\psi$. From this, by density and periodicity we conclude that $\LL_K u(x)=h(x)$ for almost every $x\in\R$.

In addition, in \Cref{prop:gy:weak per and nonper sol} we will state a consistency-type result establishing that periodic weak solutions are also weak solutions in the Dirichlet sense in every interval under their own exterior data. This result will be needed later in the paper.

A final comment on another notion of periodic solution to 
$\LL_Ku= h$ in $\R$ is in order. Based on our integration by parts formula  (\Cref{fl l1}) we may also introduce the concept of {\em periodic distributional solution} simply by asking 
that $u\in L^1(-L,L)$ is $2L$-periodic and that 
\begin{equation}\label{def distrib.sol integro}
\intL u\,\LL_K\psi=\intL h \psi
\end{equation}
for every $2L$-periodic and smooth function $\psi$. This notion of solution is convenient when considering more general right-hand sides $h$, such as distributions compactly supported in $(-L,L)$, with the understanding that $\int\!h \psi$ means the action of the distribution $h$ on the test function $\psi$.

Be aware that in 
\cite{DRSV,RS,S1}, whose regularity results will be used in \Cref{ss_Calpha_reg}, the authors call ``weak solution'' to what we refer as ``distributional solution''. 
The two notions coincide in the usual Dirichlet case if $u$ belongs to $L^2(\R)$ and has finite $W^{2,s}$ semi-norm. The same holds true in the periodic setting. More precisely, as a consequence of \Cref{fl l1}, we will see that every periodic distributional solution $u$ satisfying 
$u\in L^2(-L,L)$ and $[u]_K<+\infty$, is also a periodic weak solution (and vice-versa).

The following lemma is the basic integration by parts formula associated to 
$\LL_K$ when acting on periodic functions. It will justify the appropriate notion of periodic solution in the weak sense, and it will give the suitable kinetic term of the Lagrangian functional associated to $\LL_K$. 

\begin{lemma}\label{fl l1}
Let $K$ satisfy the upper bound \eqref{defi laplacian_K growth1}. If $u$ and 
$\psi$ are $2L$-periodic functions, $u\in L^2(-L,L),$ $[u]_K<+\infty,$ and 
$\psi\in C^{\alpha}(\R)$ for some $\alpha>2s$,\footnote{Here, $\alpha>1$ is allowed as well.}
 then
\begin{equation}\label{fl eq0}
\begin{split}
\intL\! u\,\LL_{K}\psi
=\frac{1}{2}\intL dx\!\int_\R dy\,
{\big(u(x)-u(y)\big)\big(\psi(x)-\psi(y)\big)}K(|x-y|).
\end{split}
\end{equation}
If in addition 
$u\in C^{\alpha}(\R)$ for some $\alpha>2s$, 
then
\begin{equation}\label{fl eq0_full integration}
\intL\! u\,\LL_{K}\psi=\intL\! (\LL_{K}u)\psi\,.
\end{equation}
\end{lemma}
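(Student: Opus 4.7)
The plan is to argue by truncation. For $\epsilon>0$, set
$\LL_K^\epsilon\psi(x):=\int_{|x-y|>\epsilon}(\psi(x)-\psi(y))K(|x-y|)\,dy$
and
$$J^\epsilon:=\frac{1}{2}\int_{-L}^L dx\int_{|x-y|>\epsilon}(u(x)-u(y))(\psi(x)-\psi(y))K(|x-y|)\,dy.$$
I will show $J^\epsilon=\int_{-L}^L u\,\LL_K^\epsilon\psi\,dx$ for each $\epsilon>0$ and then let $\epsilon\downarrow 0$. The truncation is crucial because, after expanding the product $(u(x)-u(y))(\psi(x)-\psi(y))$, the individual quadratic pieces are non-integrable at the diagonal; only their combined difference is.

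For fixed $\epsilon$, the kernel is bounded on $\{|z|>\epsilon\}$, so Fubini applies. The $u(x)\psi(x)$ and $u(x)\psi(y)$ pieces in the expansion combine directly into $\tfrac{1}{2}\int_{-L}^L u(x)\LL_K^\epsilon\psi(x)\,dx$. The remaining piece, $A^\epsilon:=-\int_{-L}^L dx\int_{|x-y|>\epsilon}u(y)(\psi(x)-\psi(y))K(|x-y|)\,dy$, is identified via a periodization step: renaming $x\leftrightarrow y$ rewrites $A^\epsilon$ as $\int_\R dx\,u(x)\int_{-L}^L dy\,(\psi(x)-\psi(y))K(|x-y|)\chi_{\{|x-y|>\epsilon\}}$. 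Decomposing $\R=\bigsqcup_{k\in\Z}[(2k-1)L,(2k+1)L)$ in the outer integral, substituting $x=\xi+2kL$ with $\xi\in(-L,L)$, and invoking the $2L$-periodicity of $u$ and $\psi$, this becomes $\int_{-L}^L u(\xi)\sum_{k\in\Z}\int_{-L}^L (\psi(\xi)-\psi(y))K(|\xi+2kL-y|)\chi_{\{|\xi+2kL-y|>\epsilon\}}\,dy\,d\xi$. The change of variables $z=y-2kL$ in each inner integral (using periodicity of $\psi$) shifts its domain to a translate of $(-L,L)$; these translates tile $\R$ as $k$ varies, so the sum collapses into $\int_\R(\psi(\xi)-\psi(z))K(|\xi-z|)\chi_{\{|\xi-z|>\epsilon\}}\,dz=\LL_K^\epsilon\psi(\xi)$. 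Hence $A^\epsilon=\int_{-L}^L u\,\LL_K^\epsilon\psi\,dx$ and therefore $J^\epsilon=\int_{-L}^L u\,\LL_K^\epsilon\psi\,dx$.

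Passing to the limit requires two ingredients. First, the hypothesis $\alpha>2s$ together with \eqref{defi laplacian_K growth1} yields $|\LL_K^\epsilon\psi(x)-\LL_K\psi(x)|\leq C\epsilon^{\alpha-2s}\to 0$ uniformly in $x$ (when $\alpha>1$, the odd linear term in the diagonal region is killed by the evenness of $K$), while $|\LL_K^\epsilon\psi|$ is uniformly bounded since $\psi$ is bounded and $\int_{|z|>1}K(|z|)\,dz<\infty$; as $u\in L^1(-L,L)$, dominated convergence gives $\int_{-L}^L u\,\LL_K^\epsilon\psi\to\int_{-L}^L u\,\LL_K\psi$. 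Second, I verify $[\psi]_K<+\infty$ by the same near/far splitting: $\alpha>2s>s$ makes $|z|^{2\alpha-1-2s}$ integrable at $0$, and boundedness of $\psi$ controls the tail via \eqref{defi laplacian_K growth1}. Then Cauchy--Schwarz bounds the integrand of $J$ in absolute value by an integrable function with total mass $\sqrt 2\,[u]_K[\psi]_K$, so dominated convergence yields $J^\epsilon\to J$. This proves \eqref{fl eq0}.

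For \eqref{fl eq0_full integration}, I apply \eqref{fl eq0} with the roles of $u$ and $\psi$ interchanged; this is legitimate because $\psi\in L^2(-L,L)$ and $[\psi]_K<+\infty$ (both just verified), and now $u\in C^\alpha(\R)$ plays the test-function role. Since the right-hand side of \eqref{fl eq0} is manifestly symmetric in $u$ and $\psi$, the identity $\int_{-L}^L u\,\LL_K\psi=\int_{-L}^L(\LL_K u)\psi$ is immediate. The main obstacle throughout is the periodization step: because the two integration domains in the bilinear form are asymmetric ($(-L,L)$ versus $\R$), the swap $x\leftrightarrow y$ only restores symmetry up to the action of the period lattice $2L\Z$, and tiling $\R$ by translates of $(-L,L)$ while invoking periodicity of $u$ and $\psi$ is precisely what identifies $A^\epsilon$ with $\int_{-L}^L u\,\LL_K^\epsilon\psi$.
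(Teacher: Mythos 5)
Your proposal is correct and follows essentially the same route as the paper: truncate the kernel as $K\chi_{(\epsilon,+\infty)}$, symmetrize the $(-L,L)\times\R$ integral by swapping $x\leftrightarrow y$ and using the $2L$-periodic change of variables over the lattice $2L\Z$, and pass to the limit $\epsilon\downarrow0$ on both sides by dominated convergence (with the Cauchy--Schwarz majorant $[u]_K[\psi]_K$ for the bilinear form and the uniform bound on $\LL_{K_\epsilon}\psi$ for the left-hand side). The derivation of \eqref{fl eq0_full integration} from the symmetry of the right-hand side of \eqref{fl eq0} is likewise the paper's argument.
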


\begin{remark}\label{rmk:pvLK:bilinear.form}{\em 
Before proving the lemma, a few remarks on \eqref{fl eq0} are in order. Recall that by definition 
$$
  \LL_K\psi(x)=\lim_{\epsilon\downarrow 0}\int_{\R}dy\,(\psi(x)-\psi(y))K_{\epsilon}(|x-y|)
  =:\lim_{\epsilon\downarrow 0}\LL_{K_{\epsilon}}\psi(x),\quad\text{where }K_{\epsilon}:=K\chi_{(\epsilon,+\infty)}.
$$
If 
$\psi\in C^{\alpha}(\R)$ 
($\psi$ not being necessarily periodic) and $\alpha>2s$, then 
$\LL_{K_{\epsilon}}\psi(x)$ is uniformly bounded for all 
$\epsilon>0$ and all $x\in\R$, and the pointwise limit as $\epsilon\downarrow0$ is well defined. This can be checked as follows. 
First, note that we can assume without loss of generality $2s<\alpha<2$, since $s$ is smaller than $1$. 
Using the upper bound \eqref{defi laplacian_K growth1} we see that 
(the second line is needed only if $2s\geq 1$ and hence $1<\alpha< 2$)
\begin{equation}\label{limit inside integral fractional laplacian prev}
\begin{split}
|\LL_{K_\epsilon}\psi(x)|&=\Big|\int_{\R}dz\,\big(\psi(x)-\psi(x-z)\big)
K_\epsilon(|z|)\Big|\\
&=\frac{1}{2}\Big|\int_{\R}dz\,\big(2\psi(x)-\psi(x-z)-\psi(x+z)\big)
K_\epsilon(|z|)\Big|\\
&\leq 4\Lambda\|\psi\|_{L^\infty(\R)}\int_1^{+\infty}\frac{dt}{t^{1+2s}}+
2\Lambda \|\psi\|_{C^{\alpha}(\R)}\int_{0}^1dt\,
\frac{t^{\alpha}}{t^{1+2s}}
\leq C
\end{split}
\end{equation}
for all $\epsilon>0$ and all $x\in\R$,
where $C$ is a constant depending only on 
$\Lambda, s,{\alpha},$ and $\|\psi\|_{C^{\alpha}(\R)}$.
Similar arguments also show that $|\LL_{K_\epsilon}\psi(x)-\LL_{K_\delta}\psi(x)|$ tends to zero as $\epsilon$ and 
$\delta$ tend to $0$. This proves the well-definiteness of the left-hand side of \eqref{fl eq0}, since we are assuming that $u\in L^2(-L,L)\subset L^1(-L,L)$.

On the other hand, observe that no principal value is required on the right-hand side of \eqref{fl eq0}. To see this, simply write 
${K}={K}^{1/2}{K}^{1/2}$ and use the Cauchy-Schwarz inequality to get
\begin{equation} \label{Holder for fl eq0_aux} 
  \frac{1}{2}\int_{-L}^Ldx\!\int_{\R}dy\,|u(x)-u(y)||\psi(x)-\psi(y)|K(|x-y|)
  \leq [u]_K [\psi]_K\,.
\end{equation}
Hence, 
$|u(x)-u(y)||\psi(x)-\psi(y)|{K}(|x-y|)$ is integrable in $(-L,L)\times\R$.
Here we have also used that $[\psi]_K<+\infty$ if $\alpha>s$.
}\end{remark} 

\begin{proof}[Proof of \Cref{fl l1}]
 Set
 $K_\epsilon=K\chi_{(\epsilon,+\infty)}$ for $\epsilon>0$, as before. We claim that it suffices to prove the lemma for $K_{\epsilon}$ instead of $K.$ Indeed, by definition 
 $\LL_K\psi(x)=\lim_{\epsilon\downarrow 0}\LL_{K_{\epsilon}}\psi(x)$ and, therefore, from \eqref{limit inside integral fractional laplacian prev} and dominated convergence it follows that $\lim_{\epsilon\downarrow 0}\int_{-L}^L u\,\LL_{K_{\epsilon}}\psi=
\int_{-L}^L u\,\LL_K\psi.$ On the other hand, using the fact that $0\leq K_{\epsilon}\leq K$ for all $\epsilon>0$ and 
\eqref{Holder for fl eq0_aux}, by dominated convergence we get that
\begin{equation}
\begin{split}
  \lim_{\epsilon\downarrow 0}\intL dx\!\int_\R dy\,&
{\big(u(x)-u(y)\big)\big(\psi(x)-\psi(y)\big)}K_{\epsilon}(|x-y|)
\\
  =& \intL dx\!\int_\R dy\,
{\big(u(x)-u(y)\big)\big(\psi(x)-\psi(y)\big)}K(|x-y|).
\end{split} 
\end{equation}
This shows the claim. Thus, we will work with $K_{\epsilon}$ from now on.

In the next step we check the integrability properties of certain integrals. This is required to justify the calculations proving the lemma.
Writing ${K_\epsilon}={K_\epsilon}^{1/2}{K_\epsilon}^{1/2}$ and using the Cauchy-Schwarz inequality and the upper bound \eqref{defi laplacian_K growth1} on the growth of  $K$, we see that
\begin{equation}
\begin{split}
\intL dx \int_{\R} dy\,
&|\psi(x)-\psi(y)|{K_\epsilon}(|x-y|)\,|u(x)|\\
&\leq\sqrt{2}
[\psi]_K\Big(\intL dx\, |u(x)|^2
\int_{\R\setminus (x-\epsilon,x+\epsilon)}dy\,K(|x-y|)\Big)^{1/2}
\leq \frac{C}{\epsilon^{s}}\,[\psi]_K\|u\|_{L^2(-L,L)},
\end{split}
\end{equation}
where $C$ only depends on $\Lambda$ and $s$.  Therefore, 
$({\psi(x)-\psi(y))}K_\epsilon(|x-y|)u(x)$ is absolutely integrable in $(-L,L)\times\R$. This also proves that $(\psi(x)-\psi(y))K_{\epsilon}(|x-y|)u(y)$ is absolutely integrable in 
$(-L,L)\times\R$, in view of \eqref{Holder for fl eq0_aux}  and the fact that
$|u(y)|\leq|u(y)-u(x)|+|u(x)|.$
These integrability properties  justify the forthcoming computations.

We now address the proof of \eqref{fl eq0} for $K_\epsilon$. Interchanging the name of the variables $x$ and $y$, 
we express the first integral as follows:
\begin{equation}\label{fl eq1}
\begin{split}
\intL u\,\LL_{K_\epsilon} \psi
&=\intL dx \int_\R dy\,\big({\psi(x)-\psi(y)\big)}{K_\epsilon}(|x-y|)\,u(x)\\
&=\frac{1}{2}\intL dx\int_\R dy\,\big({\psi(x)-\psi(y)\big)}{K_\epsilon}(|x-y|)\,u(x)\\
&\quad+\frac{1}{2}\intL dy \int_\R dx\,\big({\psi(y)-\psi(x)\big)}{K_\epsilon}(|x-y|)\,u(y).
\end{split}
\end{equation}
Since $u$ and $\psi$ are $2L$-periodic, the change of variables 
$\overline x=x-2kL$, $\overline y=y-2kL$ yields
\begin{equation}\label{fl eq2}
\begin{split}
\intL dy \int_\R dx\,\big(&\psi(y)-\psi(x)\big){K_\epsilon}(|x-y|)\,u(y)\\
&=\sum_{k\in\Z}\intL dy\int_{(2k-1)L}^{(2k+1)L}dx\,
\big({\psi(y)-\psi(x)}\big){K_\epsilon}(|x-y|)\,u(y)\\
&=\sum_{k\in\Z}\int_{-(2k+1)L}^{-(2k-1)L}d\overline y\intL d\overline x\,
\big({\psi(\overline y)-\psi(\overline x)}\big){K_\epsilon}(|\overline x-\overline y|)\,u(\overline y)
\\
&=-\intL d\overline x \int_\R d\overline y\,\big({\psi(\overline x)-\psi(\overline y)}\big){K_\epsilon}(|\overline x-\overline y|)
\,u(\overline y).
\end{split}
\end{equation}
From \eqref{fl eq1} and \eqref{fl eq2}, we conclude that
\begin{equation}
\intL u\,\LL_{K_\epsilon} \psi
=\frac{1}{2}\intL dx\int_\R dy\,
{\big(\psi(x)-\psi(y)\big)\big(u(x)-u(y)\big)}{K_\epsilon}(|x-y|).
\end{equation}
This shows \eqref{fl eq0} for $K_{\epsilon}$ in place of $K$, and finishes the proof of the lemma.
\end{proof}

Given $2L$-periodic functions $u$ and $v$ such that $[u]_K<+\infty$ and $[v]_K<+\infty$, we have considered the symmetric bilinear form
\begin{equation}\label{fl eq0_bis}
\begin{split}
\langle u,v\rangle_K:=\frac{1}{2}\intL dx\!\int_\R dy\,
{\big(u(x)-u(y)\big)\big(v(x)-v(y)\big)}K(|x-y|),
\end{split}
\end{equation}
which is well defined by \eqref{Holder for fl eq0_aux}. In particular, $[u]_K^2=\langle u,u\rangle_K$. Moreover, if in addition  
$u\in C^2(\R)$ and $v\in L^2(-L,L)$, \Cref{fl l1} yields
$$\langle u,v\rangle_K=\intL\! (\LL_{K}u)\,v.$$

Let us now turn our attention to the Lagrangian associated to the linear and semilinear equations. To compare it with the nonperiodic scenario, 
we begin by considering  the Dirichlet problem associated to the linear equation in a bounded open interval $I\subseteq\R$, namely,
\begin{equation}\label{semilinear dirichlet}
\left\{
\begin{aligned}
\LL_K u&=h &&\text{ in }I, \\
u&=g &&\text{ in }\R\setminus I,
\end{aligned}\right.
\end{equation}
where $h$ is the given right-hand side and $g$ corresponds to the exterior data. 
It is well known that  \eqref{semilinear dirichlet} is the Euler-Lagrange equation for critical points of the Lagrangian
\begin{equation}\label{energy dirichlet}
\begin{split}
\EE_D(u)&:
=\frac{1}{4}\Big\{\int_I dx\int_Idy\, |u(x)-u(y)|^2 K(|x-y|)\\
&\quad\qquad+2\int_I dx\int_{I^c}dy\, |u(x)-u(y)|^2 K(|x-y|)\Big\}
- \int_I hu
\end{split}
\end{equation}
among functions satisfying $u=g$ in $\R\setminus I$ ---see also \eqref{intro:eq:Lagran frac lap nonper} to write this as a single double integral in $(\R\times\R)\setminus (I^c\times I^c)$.

In contrast, consider now the periodic scenario. Let $L>0$ and $I=(-L,L)\subset\R$. We want to study the variational framework of $2L$-periodic solutions to  $\LL_K u=h$ in $\R$, where $h:\R\to\R$ is a $2L$-periodic function.
Here, no boundary data can be imposed (and hence the class of competitors above for the Dirichlet problem makes no sense here), but the solution is asked to be periodic and the equation to hold in the whole real line.

In the following \Cref{critical points semilinear periodic} we show that, as announced, $\LL_K u=h$ is the Euler-Lagrange equation among $2L$-periodic functions of a different functional than the one in the Dirichlet case. It is given by
\begin{equation}\label{def energy linear periodic case}
\begin{split}
\EE_{\LL_K}(u)&=\frac{1}{4}\int_I dx\int_\R dy\,
|u(x)-u(y)|^2K(|x-y|)- \int_I h u\\
&=\frac{1}{2}\langle u, u\rangle_K-\int_I h u=\frac{1}{2}[u]_K^2-\int_I h u.
\end{split}
\end{equation}
Using the periodicity of $u$, sometimes it will be useful to write 
$\EE_{\LL_K}(u)=\frac{1}{2}[u]_K^2-\int_I h u$ with
\begin{equation}
\label{gy:expr of K by periodicity}
\begin{split}
[u]_K=\Big(\frac{1}{2}\int_I dx\int_Idy\,
|u(x)-u(y)|^2\sum_{k\in\Z}K(|x-y+2kL|)\Big)^{1/2}.
\end{split}
\end{equation}
This will be useful, for instance, in the proof of \Cref{thm rearrangement} to get our results on the periodic symmetric decreasing rearrangement.

\begin{lemma}\label{critical points semilinear periodic}
Let $K$ satisfy the upper bound \eqref{defi laplacian_K growth1}. Assume that $u$ and $h$ are $2L$-periodic functions such that 
$u\in L^2(-L,L)$, $[u]_{K}<+\infty$, and $h\in L^1(-L,L)$. 
Then,
\begin{equation}\label{eq critical point linear weak}
\frac{d}{dt}\biggr\rvert_{t=0}\EE_{\LL_K}(u+t\psi)
=\langle u, \psi\rangle_K-\intL h \psi
=\intL \big(u\,\LL_K\psi -h\psi\big)
\end{equation}
for every $2L$-periodic smooth function $\psi$. In particular, if $h\in L^2(-L,L)$ and $u$ is a critical point of $\EE_{\LL_K}$ among $2L$-periodic smooth perturbations, then 
$\LL_K u=h$ in $\R$ in the periodic weak sense.
\end{lemma}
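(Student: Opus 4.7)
The plan is to rely on the bilinearity of $\langle\cdot,\cdot\rangle_K$, then invoke the integration by parts formula of \Cref{fl l1}, and finally extend the test function class by density.

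\textbf{Step 1: Direct computation of the first variation.} Given a $2L$-periodic smooth function $\psi$, I first observe that both $u+t\psi\in L^2(-L,L)$ and $[u+t\psi]_K<+\infty$ for every $t\in\R$, since $\psi\in C^\alpha(\R)$ with $\alpha>s$ and hence $[\psi]_K<+\infty$ (as in \Cref{rmk:pvLK:bilinear.form}). By expanding the symmetric bilinear form,
\begin{equation*}
[u+t\psi]_K^2=\langle u+t\psi,u+t\psi\rangle_K=[u]_K^2+2t\langle u,\psi\rangle_K+t^2[\psi]_K^2,
\end{equation*}
so that
\begin{equation*}
\EE_{\LL_K}(u+t\psi)=\tfrac12[u]_K^2+t\langle u,\psi\rangle_K+\tfrac{t^2}{2}[\psi]_K^2-\intL hu-t\intL h\psi.
\end{equation*}
Each term is a polynomial in $t$ with finite coefficients (the last one is finite since $h\in L^1(-L,L)$ and $\psi\in L^\infty(\R)$). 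Differentiating at $t=0$ yields the first equality in \eqref{eq critical point linear weak}.

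\textbf{Step 2: Rewriting via integration by parts.} The second equality in \eqref{eq critical point linear weak} is exactly the content of \eqref{fl eq0} in \Cref{fl l1} applied to the pair $(u,\psi)$, using that $\psi$ is smooth (hence $C^\alpha$ for any $\alpha>2s$) and that $u\in L^2(-L,L)$ with $[u]_K<+\infty$. This gives $\langle u,\psi\rangle_K=\intL u\,\LL_K\psi$, and subtracting $\intL h\psi$ completes the identity.

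\textbf{Step 3: From the critical point identity to weak periodic solutions.} If $u$ is a critical point of $\EE_{\LL_K}$ under $2L$-periodic smooth perturbations and $h\in L^2(-L,L)$, then Step~1 yields
\begin{equation*}
\langle u,\psi\rangle_K=\intL h\psi\qquad\text{for every $2L$-periodic smooth $\psi$.}
\end{equation*}
To conclude that $u$ is a periodic weak solution to $\LL_Ku=h$ in the sense of \eqref{truly weak form eq}, I would like this identity for every $2L$-periodic $v$ with $v\in L^2(-L,L)$ and $[v]_K<+\infty$. Here I would use a density argument: given such a $v$, mollify by a smooth $2L$-periodic approximation of the identity (convolution preserves periodicity) to obtain a sequence of $2L$-periodic smooth functions $\psi_n$ with $\psi_n\to v$ in $L^2(-L,L)$ and $[\psi_n-v]_K\to 0$. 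Then by Cauchy--Schwarz applied to $\langle\cdot,\cdot\rangle_K$ one has $|\langle u,\psi_n-v\rangle_K|\le[u]_K\,[\psi_n-v]_K\to 0$, while $\intL h(\psi_n-v)\to 0$ by $L^2$-continuity. Passing to the limit gives $\langle u,v\rangle_K=\intL hv$, which is \eqref{truly weak form eq}.

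\textbf{Main obstacle.} The computation itself is essentially algebraic and the integration by parts has already been done in \Cref{fl l1}. The only delicate point is the density step needed to upgrade the test function class from smooth $2L$-periodic functions to all $2L$-periodic $v\in L^2(-L,L)$ with $[v]_K<+\infty$. For this one should verify that standard mollification (with a smooth, compactly supported, $2L$-periodized kernel) converges in both the $L^2(-L,L)$ and the $[\cdot]_K$ semi-norm; the $L^2$ convergence is classical and the convergence in $[\cdot]_K$ follows from the upper bound \eqref{defi laplacian_K growth1}, translation invariance of $K(|\cdot|)$, and the dominated convergence theorem applied to the defining integral of $[u-\psi_n]_K^2$.
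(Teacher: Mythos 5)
Your proposal is correct and follows essentially the same route as the paper: the first equality comes from expanding the bilinear form $\langle\cdot,\cdot\rangle_K$, the second is exactly \eqref{fl eq0} of \Cref{fl l1}, and the final claim is obtained by extending the test-function class by density. The paper's own proof is just a terser version of this (it cites \Cref{fl l1} for \eqref{eq critical point linear weak} and says ``by density'' for the last step, which your mollification sketch fills in adequately).
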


\begin{proof}
\Cref{fl l1} directly gives \eqref{eq critical point linear weak}.
Now, if $u$ is a critical point of $\EE_{\LL_K}$, from \eqref{eq critical point linear weak} we deduce that 
$$
\langle u, \psi\rangle_K=\intL h \psi
$$ 
for every $2L$-periodic smooth $\psi$. By density, this equality extends to every $2L$-periodic function $\psi$ such that 
$\psi\in L^2(-L,L)$ and 
$[\psi]_K<+\infty$. Thus, $u$ is a periodic weak solution to $\LL_K u=h$ in $\R$.
\end{proof}

Note that, once the variational framework for the linear periodic case is developed, we easily get that the semilinear equation $\LL_K u=g(u)$ in $\R$ is the Euler-Lagrange equation of the functional $\EE_{\LL_K}$ given in \eqref{energy dirichlet periodic}. That is, for the semilinear case we simply replace $\int_I h u$ by 
$\int_I  G(u)$ on the right-hand side of \eqref{def energy linear periodic case}, where $G'=g$.

To finish this section, we establish a consistency-type result that will be needed within the proof of \Cref{fthm:Holder reg periodic}. Recall that \Cref{fthm:Holder reg periodic} establishes the Hölder regularity 
 of bounded periodic distributional solutions to the equation $\LL_K u =f(u)$. The next proposition shows that periodic distributional solutions, in the linear case, are also distributional solutions in the usual Dirichlet sense.  It follows easily from the proposition  that periodic weak solutions are also weak solutions in the usual Dirichlet sense, provided one uses the standard (non-periodic) integration by parts formula and the corresponding (non-periodic) semi-norm to define weak solutions in the Dirichlet sense.

\begin{proposition}
\label{prop:gy:weak per and nonper sol}
Let $K$ satisfy the upper bound \eqref{defi laplacian_K growth1}. Assume that $u$ and $h$ belong to $L^1(-L,L)$, $u$ is $2L$-periodic, and 
\begin{equation}
\int_{-L}^L u\,\LL_K \psi=\int_{-L}^L h\psi
\end{equation}
for every $2L$-periodic smooth function $\psi$.

Then,
\begin{equation}\label{weak dirichlet sol defi}
\int_\R u\,\LL_K \varphi=\int_\R h\varphi
\end{equation}
for every $\varphi\in C^\infty_c(-L,L)$. In particular, $u$ is a distributional solution of $\LL_K u=h$ in the usual Dirichlet sense.
\end{proposition}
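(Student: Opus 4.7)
The strategy is to reduce the desired nonperiodic test identity to the periodic hypothesis by \emph{periodizing} the test function. Given $\varphi\in C_c^\infty(-L,L)$, define
\[
\widetilde\varphi(y):=\sum_{k\in\Z}\varphi(y+2kL).
\]
Since $\supp\varphi$ is a compact subset of $(-L,L)$, for each $y\in\R$ at most one term in this sum is nonzero in a neighborhood of $y$. Hence $\widetilde\varphi\in C^\infty(\R)$ is $2L$-periodic, and $\widetilde\varphi\equiv\varphi$ on $(-L,L)$. The plan is to establish the identity
\begin{equation}\label{eq:planmain}
\int_\R u\,\LL_K\varphi=\int_{-L}^L u\,\LL_K\widetilde\varphi,
\end{equation}
for then the periodic hypothesis applied with $\psi=\widetilde\varphi$ yields $\int_{-L}^L u\,\LL_K\widetilde\varphi=\int_{-L}^L h\,\widetilde\varphi=\int_{-L}^L h\,\varphi=\int_\R h\,\varphi$, the last equality using $\supp\varphi\subset(-L,L)$.

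To derive \eqref{eq:planmain}, I would split $\int_\R u\,\LL_K\varphi=\sum_{k\in\Z}\int_{(2k-1)L}^{(2k+1)L}u(x)\,\LL_K\varphi(x)\,dx$ and apply the $2L$-periodicity of $u$, via the change of variables $x=y+2kL$, to rewrite each piece as $\int_{-L}^L u(y)\,\LL_K\varphi(y+2kL)\,dy$. Interchanging sum and integral would then reduce matters to the pointwise identity $\sum_{k\in\Z}\LL_K\varphi(y+2kL)=\LL_K\widetilde\varphi(y)$, which itself follows from a translation in the integration variable defining $\LL_K\varphi(y+2kL)$ combined with one more swap of sum and integral.

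The hard part will be justifying these two exchanges of sum and integral, since $u$ only lies in $L^1(-L,L)$ and no P.V.\ cancellation is available inside the summation. The remedy comes from the upper bound \eqref{defi laplacian_K growth1}: for $y\in(-L,L)$ and $|k|\geq 2$ the point $y+2kL$ lies outside $\supp\varphi$, whence
\[
|\LL_K\varphi(y+2kL)|=\Big|\int_{\supp\varphi}\varphi(z)\,K(|y+2kL-z|)\,dz\Big|\leq \frac{C}{|k|^{1+2s}},
\]
with $C$ depending only on $\Lambda$, $L$, $s$, and $\|\varphi\|_{L^1(\R)}$. This decay provides a summable dominant uniform in $y\in(-L,L)$, which legitimizes every Fubini/dominated convergence step required in the two interchanges and so closes the proof of \eqref{eq:planmain}. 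Finally, distributional solvability in the usual Dirichlet sense is immediate from \eqref{weak dirichlet sol defi}, since the left-hand side is the action of $\LL_K\varphi$ on $u$ for an arbitrary $\varphi\in C^\infty_c(-L,L)$.
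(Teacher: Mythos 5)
Your proof is correct and follows essentially the same route as the paper's: both reduce matters to the identity $\int_\R u\,\LL_K\varphi=\int_{-L}^L u\,\LL_K\widetilde\varphi$ (your $\widetilde\varphi$ is exactly the paper's $2L$-periodic extension $\varphi_e$), established by folding the real line onto a single period via the $2L$-periodicity of $u$ and controlling the far-away contributions with the upper bound \eqref{defi laplacian_K growth1} together with the fact that $\supp\varphi$ is compactly contained in $(-L,L)$. The paper organizes the same Fubini manipulation slightly differently---splitting the double integral over $(-L,L)\times(-L,L)$, $(-L,L)\times(-L,L)^c$, and $(-L,L)^c\times(-L,L)$ and refolding the last piece, rather than summing $\LL_K\varphi(\cdot+2kL)$ over $k$---but the content is identical.
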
 

\begin{proof} Let $I=(-L,L)$,  
$\varphi\in C^\infty_c(I)$, and let $\varphi_e\in C^{\infty}(\R)$ denote its $2L$-periodic extension to~$\R$. We claim that
\begin{equation}\label{eq:gyula:period non period Keps}
\int_\R u\,\LL_{K} \varphi
=\int_I u\,\LL_{K} \varphi_e\,.
\end{equation}
Recall that $u\in L^1(I)$ is $2L$-periodic.
To prove the claim let us write
\begin{equation}\label{regularity eq prev1}
\begin{split}
\int_\R u\,\LL_{K} \varphi
&=\int_\R dx\int_\R dy\, u(x)\big(\varphi(x)-\varphi(y)\big){K}(|x-y|)\\
&=\int_I dx\int_I dy\, u(x)\big(\varphi_e(x)-\varphi_e(y)\big){K}(|x-y|)\\
&\quad+\int_I dx\int_{\R\setminus I} dy\,u(x)\varphi_e(x){K}(|x-y|)
-\int_{\R\setminus I} dx\int_I dy\,u(x)\varphi_e(y){K}(|x-y|).
\end{split}
\end{equation}
Observe that the first two integrals on the last expression in  \eqref{regularity eq prev1} are absolutely convergent since $u\in L^1(I)$, $K$ satisfies \eqref{defi laplacian_K growth1}, $\varphi_e\in C^2(I)$, and $\varphi_e|_{I}$ has compact support in $I$. For the third integral, using that $\varphi$ has support in $(-L+\delta,L-\delta)=:I_{\delta}$ for some $\delta>0$, and also that $u$ is $2L$-periodic, we see that
\begin{equation}
\begin{split}
\int_{\R\setminus I} dx\int_I dy\, |u(x)||\varphi_e(y)|K(|x-y|)
&\leq\|\varphi\|_{L^\infty(\R)}\sum_{k\in\Z\setminus\{0\}}\int_{I+2kL} dx
\int_{I_{\delta}} dy\,|u(x)|K(|x-y|)\\
&=
\|\varphi\|_{L^\infty(\R)}\sum_{k\in\Z\setminus\{0\}}\int_{I} d\bar{x}
\int_{I_{\delta}-2kL} d\bar{y}\,|u(\bar{x})|K(|\bar{x}-\bar{y}|)\\
&\leq \Lambda\|\varphi\|_{L^\infty(\R)}\|u\|_{L^1(I)}
\int_{\R\setminus(-\delta,\delta)} dt\, |t|^{-1-2s}<+\infty.
\end{split}
\end{equation}
This means that $u(x)\varphi_e(y){K}(|x-y|)$ is absolutely integrable in $(\R\setminus I)\times I$ and therefore, arguing as in \eqref{fl eq2} and replacing there $k\in\Z$ by $k\in\Z\setminus\{0\}$, we see that
\begin{equation}
\begin{split}
\int_{\R\setminus I} dx\int_I dy\, u(x)\varphi_e(y)K(|x-y|)
=\int_I dx\int_{\R\setminus I} dy\, u(x)\varphi_e(y)K(|x-y|).
\end{split}
\end{equation}
Thus, \eqref{regularity eq prev1} leads to
\begin{equation}\label{final link periodic dirichlet}
\begin{split}
\int_\R u\,\LL_{K} \varphi
&=\int_I dx\int_I dy\, u(x)\big(\varphi_e(x)-\varphi_e(y)\big){K}(|x-y|)\\
&\quad+\int_I dx\int_{\R\setminus I} dy\,
u(x)\big(\varphi_e(x)-\varphi_e(y)\big){K}(|x-y|)
=\int_I u\,\LL_{K} \varphi_e.
\end{split}
\end{equation}
This proves the claim \eqref{eq:gyula:period non period Keps}.

From \eqref{eq:gyula:period non period Keps}, using that $u$ is a distributional $2L$-periodic solution to 
$\LL_K u=h$ in $\R$, we deduce that
\begin{equation}
\begin{split}
\int_\R u\,\LL_K \varphi=\int_I u\,\LL_K \varphi_e=\int_I h\varphi_e=\int_\R h\varphi,
\end{split}
\end{equation}
since $\varphi$ vanishes outside $I$.
This establishes \eqref{weak dirichlet sol defi}.
\end{proof}

\section{Fourier multiplier operators}\label{ss Energy dispersive}
\label{section:Fourier}

In this section we describe the variational structure of periodic solutions to semilinear equations of the form
$\LL_K u=f(u)$ from the Fourier side point of view,  and also for more general 
multiplier operators. This has applications to travelling waves of some dispersive equations as we have explained in \Cref{intro:fourier mult}.

The first key observation is that the standard Fourier basis of unitary complex exponentials  (and, thus, also the sines and cosines) are eigenfunctions of 
$\LL_K$; see \Cref{eigenfunctions nonlocal} below. 
All what is needed for this is the kernel to be even.
Therefore, our integro-differential operators have a simple representation in the Fourier side by means of multipliers. 
More precisely, if we set 
\begin{equation}\label{definition lK(k)}
\ell_K(\xi):=\frac{1}{|\xi|}\int_{\R}dz\,
\big({1-\cos(z)}\big)K\Big(\frac{|z|}{|\xi|}\Big)
\quad\text{for $\xi\in\R\setminus\{0\}$,\quad and }\ell_K(0):=0,
\end{equation}
then, for every $2L$-periodic function $u:\R\to\C$ written as
\begin{equation}\label{eq: complex u_k}
u(x)=\sum_{k\in\Z}u_ke^{\frac{i\pi k}{L}x}
\quad\text{with}\quad
u_k:=\frac{1}{2L}\intL dx\, u(x)e^{-\frac{i\pi k}{L}\,x},
\end{equation}
we will see in \Cref{eigenfunctions nonlocal} that, formally,
\begin{equation}\label{energy dispersive symbol K}
\LL_K u(x)=\sum_{k\in\Z}\ell_K({\textstyle \frac{\pi k}{L}})u_ke^{\frac{i\pi k}{L}x}.
\end{equation}
This leads us to study the variational structure of periodic solutions to semilinear equations $\LL u=f(u)$ also for general multiplier operators of the form
\begin{equation}\label{energy dispersive symbol}
\LL u(x)=\sum_{k\in\Z}\ell({\textstyle \frac{\pi k}{L}})u_ke^{\frac{i\pi k}{L}x},
\end{equation}
where the given function $\ell:\R\to\R$ is called the symbol of $\LL$.
Clearly, $\LL=\LL_K$ for $\ell=\ell_K$. 

In the case of integro-differential operators, \eqref{definition lK(k)} shows that $\LL_K$ has a positive definite 
symbol~$\ell_K$. In fact, thanks to the bounds \eqref{defi laplacian_K growth1} and \eqref{defi laplacian_K growth2} on $K$, in \Cref{eigenfunctions nonlocal} we will check that 
\begin{equation}\label{estimates l(k) lambda}
\frac{\lambda}{c_s}\,|\xi|^{2s}
\leq\ell_K(\xi)\leq
\frac{\Lambda}{c_s}\,|\xi|^{2s}
\end{equation}
for all $\xi\in\R$, where $c_s$ is given by \eqref{defi laplacian_}.
However, as we will see below, in the Fourier series approach to periodic solutions to $\LL u=f(u)$ 
we can also allow symbols which change sign and which are bounded by a power-like behavior at infinity not necessarily of order less than 2. More precisely, we consider symbols 
$\ell:\R\to\R$ such that
\begin{equation}\label{energy dispersive symbol even}
\ell(\xi)=\ell(-\xi)\quad\text{for all $\xi\in\R$}
\end{equation}
and
\begin{equation}\label{energy dispersive symbol growth}
\limsup_{\xi\uparrow+\infty}|\xi|^{-p}|\ell(\xi)|<+\infty\quad
\text{for some $p\geq0$}.
\end{equation}
Note that the later is a weaker assumption than the second bound in \eqref{estimates l(k) lambda} if $p>2s$.
 This class of symbols contains, for example, the case
$\ell(\xi)=\beta \xi^2-\gamma |\xi|$ associated to the Benjamin equation ---simply take $p=2$ in \eqref{energy dispersive symbol growth}.

Let us start by proving that the Fourier basis is formed by eigenvectors of the integro-differential operator $\LL_K$. 

\begin{lemma}\label{eigenfunctions nonlocal}
Let $K$ satisfy the upper bound \eqref{defi laplacian_K growth1}, and let $L>0$. Then, 
\begin{equation}\label{hamil eq4}
\LL_K  \big(e^{\frac{i\pi k}{L}\cdot}\big)(x)
=\ell_K({\textstyle \frac{\pi k}{L}})e^{\frac{i\pi k}{L}x}
\end{equation}
for all $k\in\Z$ and $x\in\R$, where $\ell_K$ is given by \eqref{definition lK(k)}. Moreover, the second inequality in \eqref{estimates l(k) lambda} holds and, if $K$ also satisfies the lower bound \eqref{defi laplacian_K growth2}, the first inequality in \eqref{estimates l(k) lambda} also holds.
In addition, if $\LL_K=(-\Delta)^s$ then $\ell_K(\xi)=|\xi|^{2s}$ for all $\xi\in\R$.
\end{lemma}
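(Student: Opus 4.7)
\medskip

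\textbf{Proof plan.} The case $k=0$ is trivial: $e^{i\pi k x/L}\equiv 1$ is constant, so the definition \eqref{defi laplacian_K} immediately gives $\LL_K 1 \equiv 0 = \ell_K(0)\cdot 1$. The substantive case is $k\neq 0$. Fix such $k$ and write $\psi(x):=e^{i\pi k x/L}$. Applying \eqref{defi laplacian_K} and making the change of variable $z=x-y$ in the interior integral, I would factor out $\psi(x)$:
\begin{equation*}
\LL_K\psi(x)=\lim_{\epsilon\downarrow 0}\int_{\{|z|>\epsilon\}}\!dz\;\bigl(\psi(x)-\psi(x-z)\bigr)K(|z|)
=\psi(x)\lim_{\epsilon\downarrow 0}\int_{\{|z|>\epsilon\}}\!dz\;\bigl(1-e^{-i\pi k z/L}\bigr)K(|z|).
\end{equation*}
Since $K(|z|)=K(|-z|)$ is even, the odd sine part cancels in the principal value; what remains is the even cosine part, giving
\begin{equation*}
\LL_K\psi(x)=\psi(x)\int_\R dz\;\bigl(1-\cos({\textstyle\frac{\pi k}{L}}z)\bigr)K(|z|).
\end{equation*}
No principal value is needed in this last display because the bound \eqref{defi laplacian_K growth1} together with $1-\cos(\frac{\pi k}{L}z)=O(z^2)$ near the origin makes the integrand absolutely integrable on a neighborhood of $0$ (and, for $|z|\geq 1$, the $L^\infty$ bound of $1-\cos$ combined with \eqref{defi laplacian_K growth1} again ensures integrability).

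\medskip

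Next, I would perform the change of variable $w=(\pi k/L)z$. Writing $\xi:=\pi k/L$, regardless of the sign of $\xi$, the evenness of the integrand in $z$ gives
\begin{equation*}
\int_\R dz\;(1-\cos(\xi z))K(|z|)=\frac{1}{|\xi|}\int_\R dw\;(1-\cos w)\,K\!\Big(\frac{|w|}{|\xi|}\Big)=\ell_K(\xi),
\end{equation*}
which is exactly \eqref{definition lK(k)}. Combined with the previous display, this yields \eqref{hamil eq4}.

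\medskip

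For the bounds \eqref{estimates l(k) lambda}, I would simply insert the pointwise bounds \eqref{defi laplacian_K growth1} and \eqref{defi laplacian_K growth2} into the defining integral \eqref{definition lK(k)}. Using $K(|w|/|\xi|)\leq \Lambda (|w|/|\xi|)^{-1-2s}=\Lambda |\xi|^{1+2s}|w|^{-1-2s}$ one gets
\begin{equation*}
\ell_K(\xi)\leq \Lambda\,|\xi|^{2s}\int_\R dw\,\frac{1-\cos w}{|w|^{1+2s}}.
\end{equation*}
The remaining ingredient is the classical identity $c_s\int_\R (1-\cos w)|w|^{-1-2s}dw=1$, which is precisely the normalization making $c_s$ in \eqref{defi laplacian_} the Fourier symbol of $(-\Delta)^s$ equal to $|\xi|^{2s}$; I would either cite this or note that it follows from applying the preceding computation to $K(t)=c_s t^{-1-2s}$ and $\xi=1$. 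This yields $\ell_K(\xi)\leq \Lambda|\xi|^{2s}/c_s$, and the lower bound is identical with $\Lambda$ replaced by $\lambda$. Finally, the same computation applied to $K(t)=c_s t^{-1-2s}$ gives $\ell_K(\xi)=|\xi|^{2s}$ in the fractional Laplacian case.

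\medskip

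The only delicate point is the exchange of limit and integral when passing from the principal value with $e^{-i\pi k z/L}$ to the absolutely convergent integral with $1-\cos(\pi k z/L)$; this is handled either by symmetrizing $z\mapsto -z$ before taking the limit (so that the imaginary part is identically zero for each $\epsilon$) or, alternatively, by invoking \eqref{limit inside integral fractional laplacian prev} from the proof of Lemma~\ref{fl l1}, which gives the required uniform bound. Everything else is a direct computation.
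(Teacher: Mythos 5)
Your proof is correct and follows essentially the same route as the paper's: factor out $e^{i\pi kx/L}$ after the change of variables, use the evenness of $K$ to discard the odd sine part, rescale to obtain $\ell_K(\pi k/L)$, and then combine the classical identity $\int_\R(1-\cos z)|z|^{-1-2s}\,dz=1/c_s$ with the growth bounds \eqref{defi laplacian_K growth1}--\eqref{defi laplacian_K growth2} to get \eqref{estimates l(k) lambda} and the fractional Laplacian case. The only caveat is that your alternative justification of that identity (applying the computation to $K=c_s t^{-1-2s}$) is circular unless one independently knows the symbol of $(-\Delta)^s$ is $|\xi|^{2s}$, so the citation you offer is the right way to close it, exactly as the paper does.
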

\begin{proof}
Thanks to \eqref{defi laplacian_K growth1}, the integral defining $\ell_K(\xi)$ in \eqref{definition lK(k)} is well defined. 

Since the case $k=0$ is clear, we may assume $k\neq0$. 
Now, by a change of variables we easily see that
\begin{equation}\label{eq: frac lap e_k}
\begin{split}
\LL_K \big(e^{\frac{i\pi k}{L}\cdot}\big)(x)&
=\int_\R d\widetilde z\,\big({e^{\frac{i\pi k}{L}x}-e^{\frac{i\pi k}{L}(x+\overline z)}}\big)K(|\widetilde z|)\\
&= e^{\frac{i\pi k}{L}x}
\frac{L}{\pi|k|}\int_\R dz\,({1-e^{iz}})K\Big(\frac{L|z|}{\pi|k|}\Big)
= \ell_K({\textstyle \frac{\pi k}{L}}) e^{\frac{i\pi k}{L}x},
\end{split}
\end{equation}
where we have used that $\sin(z)$ is and odd function. 
This proves \eqref{hamil eq4}.

We now address the other statements in the lemma. It is well known that 
\begin{equation}\label{limit c_s}
\int_\R dz\,\frac{1-\cos(z)}{|z|^{1+2s}}
=\frac{\sqrt\pi\,\Gamma(1-s)}{s4^s\Gamma(1/2+s)}=\frac{1}{c_s};
\end{equation}
see \cite[Lemma 3.1.3]{BucurValdinoci} and \eqref{defi laplacian_}.
Hence, in case that $\LL_K=(-\Delta)^s$, we have
\begin{equation}
\begin{split}
\ell_K(\xi)=|\xi|^{2s}
c_s\int_\R dz\,\frac{1-\cos(z)}{|z|^{1+2s}}
=|\xi|^{2s}.
\end{split}
\end{equation}
From this, the first and second inequalities in \eqref{estimates l(k) lambda} for a general kernel $K$ follow by the growth estimates \eqref{defi laplacian_K growth2} and \eqref{defi laplacian_K growth1}, respectively. 
\end{proof}

By similar arguments to the previous ones (since we did not require in \eqref{eq: frac lap e_k} that $k$ is an integer), it is easy to see that
if $\FF: L^2(\R)\to L^2(\R)$ denotes the Fourier transform then
$\FF(\LL_K v)=\ell_K\FF(v)$ for all $v\in L^2(\R)$ sufficiently smooth. Thus, $\ell_K$ is indeed the Fourier symbol of $\LL_K$ when acting on regular functions belonging to $L^2(\R)$.

Given a kernel $K$, let us now look at the Fourier description of the Lagrangian $\EE_{\LL_K}$. This will allow us to find a  Lagrangian $\EE_{\LL}$ for general multiplier operators $\LL$. 
First of all, note that if $u$ is a $2L$-periodic $\R$-valued function and $u_k$ are given by \eqref{eq: complex u_k}, 
then $u_{-k}=\overline{u_{k}}$ for all $k\in\Z$, where
$\overline{u_{k}}$ denotes the complex conjugate of $u_{k}\in\C$. In such case, we can also write
\begin{equation}\label{hamil eq6 formula Fourier}
u(x)=\frac{ a_0}{2}+\sum_{k>0}
\Big( a_k\cos\big({\textstyle \frac{\pi k}{L}\,x}\big)
+ b_k\sin\big({\textstyle \frac{\pi k}{L}\,x}\big)\Big),
\end{equation}
where 
\begin{equation}
 a_k:=\frac{1}{L}\intL dx\, u(x)\cos\big({\textstyle \frac{\pi k}{L}\,x}\big),\qquad
 b_k:=\frac{1}{L}\intL dx\, u(x)\sin\big({\textstyle \frac{\pi k}{L}\,x}\big)
\end{equation} 
for all $k\geq0$.
In fact, we have that $2u_k=a_k-ib_k$.
Hence, if $u$ is $\R$-valued, thanks to \Cref{eigenfunctions nonlocal} we see that the Lagrangian in \eqref{energy defi half oper} can be expressed as
\begin{equation}\label{s2 sobol norm statement LK}
\begin{split}
\EE_{\LL_K}(u)
= 2L\sum_{k\in\Z}\ell_K({\textstyle \frac{\pi k}{L}})\frac{1}{2}|u_k|^2-\int_{-L}^L G(u)
=\frac{L}{2}\sum_{k>0}\ell_K({\textstyle \frac{\pi k}{L}})(|a_k|^2+|b_k|^2)
-\int_{-L}^L G(u).
\end{split}
\end{equation}
With this at hand, it is natural to define the Lagrangian
$\EE_{\LL}$ on the Fourier side, for the operator 
\begin{equation}
\LL u(x)=\sum_{k\in\Z}\ell({\textstyle \frac{\pi k}{L}})u_ke^{\frac{i\pi k}{L}x}
\end{equation}
with $\ell$ as in \eqref{energy dispersive symbol even},
and for $2L$-periodic 
$\R$-valued functions $u$, by
\begin{equation}\label{s2 sobol norm statement}
\begin{split}
\EE_{\LL}(u)&:= L\Big(\ell(0)|u_0|^2+2\sum_{k>0}\ell({\textstyle \frac{\pi k}{L}})|u_k|^2\Big)-\int_{-L}^L G(u)\\
&=\frac{L}{4}\Big(\ell(0)|a_0|^2+2\sum_{k>0}\ell({\textstyle \frac{\pi k}{L}})
(|a_k|^2+|b_k|^2)\Big)
-\int_{-L}^L G(u).
\end{split}
\end{equation}
Here we have taken into account that, despite $\ell_K(0)=0$, one may have $\ell(0)\neq0$ for certain multiplier operators $\LL$. Using now that 
$u_{-k}=\overline{u_k}$ and $2u_k=a_k-i b_k$ for 
$\R$-valued functions,
and \eqref{energy dispersive symbol even},
from \eqref{s2 sobol norm statement} we see that actually 
\begin{equation}
\EE_{\LL}(u)=\int_{-L}^L \Big(\frac{1}{2}\,u\,\LL u-G(u)\Big),
\end{equation}
as expected by looking at identity \eqref{energy defi half oper} for $\EE_{\LL_K}$. 
All these computations are formal, but they indeed work if the symbol $\ell$ satisfies \eqref{energy dispersive symbol growth} and we require enough regularity on $u$ (and, therefore, we get fast enough decay of its Fourier coefficients $u_k$ compared to $\ell({\textstyle \frac{\pi k}{L}})$ as $k\uparrow+\infty$) in order to ensure that all the sums above are absolutely convergent; see \Cref{rmk regularity and decay} for more details.
 
To finish, in the following lemma we verify that $\EE_{\LL}$
is a functional whose critical points are solutions to the equation
$\LL u=g(u)$, where $G$ is of class $C^1$ in $\R$ and $g=G'$. This is, therefore, the semilinear extension of \Cref{critical points semilinear periodic} for $\EE_{\LL_K}$, now for multiplier operators on the Fourier side. 

\begin{lemma}\label{critical point energy dispersive}
Let $\LL$ be given by \eqref{energy dispersive symbol} with $\ell$ satisfying \eqref{energy dispersive symbol even} and \eqref{energy dispersive symbol growth}. Let $G$ be of class $C^1$ on~$\R$ and $g=G'$. Assume that $u:\R\to\R$ is a $2L$-periodic function such that
\begin{equation}
\sum_{k\in\Z}\big(1+|\ell({\textstyle \frac{\pi k}{L}})|\big)|u_k|^2
<+\infty
\end{equation}
and $g(u)\in L^1(-L,L)$.
Then,
\begin{equation}
\frac{d}{dt}\biggr\rvert_{t=0}\EE_{\LL}(u+t\psi)
=\int_{-L}^L \big(\LL u-g(u)\big)\psi
\end{equation}
for every $2L$-periodic smooth function $\psi:\R\to\R$. Therefore, if in addition $u$ is a critical point of $\EE_{\LL}$ among $2L$-periodic smooth perturbations, then $\LL u=g(u)$ in $L^1(-L,L)$.
\end{lemma}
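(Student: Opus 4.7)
The plan is to compute the first variation of $\EE_\LL(u+t\psi)$ directly via the Fourier representation \eqref{s2 sobol norm statement}. Since $u$ and $\psi$ are real-valued and $2L$-periodic, their Fourier coefficients satisfy the reality conditions $u_{-k}=\overline{u_k}$ and $\psi_{-k}=\overline{\psi_k}$, and linearity of the Fourier transform gives $(u+t\psi)_k=u_k+t\psi_k$. Expanding $|u_k+t\psi_k|^2$ and differentiating the quadratic part at $t=0$ formally yields
\begin{equation*}
\frac{d}{dt}\Big|_{t=0}\Big(L\ell(0)|u_0+t\psi_0|^2+2L\sum_{k>0}\ell({\textstyle\frac{\pi k}{L}})|u_k+t\psi_k|^2\Big)=2L\ell(0)u_0\psi_0+4L\sum_{k>0}\ell({\textstyle\frac{\pi k}{L}})\Real(u_k\overline{\psi_k}).
\end{equation*}
To justify differentiating termwise, observe that $\psi$ smooth periodic implies its Fourier coefficients $\psi_k$ decay faster than any inverse polynomial; combining this with the growth bound $|\ell(\pi k/L)|\leq C(1+|k|)^p$ from \eqref{energy dispersive symbol growth} and the hypothesis $\sum_{k\in\Z}(1+|\ell(\pi k/L)|)|u_k|^2<+\infty$, Cauchy--Schwarz gives absolute convergence of the relevant series, uniformly for $t$ in a compact interval.

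For the potential term, I would apply the mean value theorem pointwise in $x\in(-L,L)$ to write
\begin{equation*}
\frac{G(u(x)+t\psi(x))-G(u(x))}{t}=\psi(x)\,g\big(u(x)+\theta(x,t)\,t\psi(x)\big)
\end{equation*}
for some $\theta(x,t)\in(0,1)$, and then pass to the limit by dominated convergence: the pointwise limit is $\psi\,g(u)$ by continuity of $g$, and a majorant is obtained from the boundedness of $\psi$ together with the hypothesis $g(u)\in L^1(-L,L)$ via a sublevel-set decomposition of $|u|$ (on sets where $|u|\leq R$, continuity of $g$ provides a uniform bound for $|g(u+t\psi)|$ when $|t|\|\psi\|_{L^\infty}$ is small; the remaining piece is controlled by $|g(u)|$ up to an error that vanishes as $t\to 0$).

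The key identification with $\int_{-L}^L(\LL u)\psi\,dx$ uses Parseval's identity and the evenness of $\ell$: the Fourier coefficients of $\LL u$ are $\ell(\pi k/L)u_k$, so
\begin{equation*}
\int_{-L}^L(\LL u)\psi\,dx=2L\sum_{k\in\Z}\ell({\textstyle\frac{\pi k}{L}})u_k\overline{\psi_k}=2L\ell(0)u_0\psi_0+4L\sum_{k>0}\ell({\textstyle\frac{\pi k}{L}})\Real(u_k\overline{\psi_k}),
\end{equation*}
where the second equality pairs up the $\pm k$ terms using $\ell(-\xi)=\ell(\xi)$, $u_{-k}=\overline{u_k}$, $\psi_{-k}=\overline{\psi_k}$. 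This matches the quadratic variation exactly, so combining the two computations gives the stated formula.

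For the last assertion, if $u$ is a critical point then $\int_{-L}^L(\LL u-g(u))\psi\,dx=0$ for every smooth $2L$-periodic $\psi$. Taking $\psi(x)=\cos(\pi k x/L)$ and $\psi(x)=\sin(\pi k x/L)$ for each $k\in\Z$ shows that every Fourier coefficient of $\LL u-g(u)\in L^1(-L,L)$ vanishes, hence $\LL u=g(u)$ almost everywhere. The main obstacle I anticipate is the technical domination of the nonlinear term in the absence of an a priori $L^\infty$ bound on $u$; this is not a conceptual difficulty but does require the sublevel-set decomposition described above.
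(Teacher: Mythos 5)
Your proof is correct and follows essentially the same route as the paper's: both differentiate the Fourier-side expression for $\EE_{\LL}$ term by term, justify convergence via Cauchy--Schwarz using the rapid decay of $\psi_k$ together with $\sum_{k}(1+|\ell(\tfrac{\pi k}{L})|)|u_k|^2<+\infty$, identify the result with $\int_{-L}^L(\LL u)\psi$ through Parseval and the evenness of $\ell$, and obtain the final claim by testing against a dense (trigonometric) family. The only differences are ones of detail: the paper differentiates the quadratic part via the symmetry $\int\psi\,\LL u=\int u\,\LL\psi$ rather than expanding $|u_k+t\psi_k|^2$, and it treats the potential term purely formally, whereas you spell out the mean-value/dominated-convergence argument for $\int G(u+t\psi)$ that the paper leaves implicit.
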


\begin{proof}
Let $\psi_k$ be the complex Fourier coefficients of $\psi$ as in \eqref{eq: complex u_k}. By \eqref{energy dispersive symbol even}, we get
\begin{equation}\label{Energy dispersive critical eq1}
\begin{split}
\int_{-L}^L{\psi}\,\LL u
=2L\sum_{k\in\Z}\ell({\textstyle \frac{\pi k}{L}})u_k\psi_{-k}
=2L\sum_{k\in\Z}\ell({\textstyle \frac{\pi k}{L}})u_{-k}\psi_{k}
=\int_{-L}^L u\,\LL \psi.
\end{split}
\end{equation}
Therefore,
\begin{equation}
\begin{split}
\frac{d}{dt}\biggr\rvert_{t=0}\EE_{\LL}(u+t\psi)
=\int_{-L}^L\frac{1}{2}\big({\psi}\,\LL u+u\,\LL \psi\big)
-\int_{-L}^L g(u)\psi
=\int_{-L}^L \big(\LL u-g(u)\big)\psi,
\end{split}
\end{equation}
as desired. The last statement in the lemma follows by density. 

We point out that the assumption \eqref{energy dispersive symbol growth} on $\ell$ guarantees that
$\sum_{k\in\Z}|\ell({\textstyle \frac{\pi k}{L}})||\psi_k|^2<+\infty$ whenever $\psi$ is smooth.
This in turn is used to ensure, using the Cauchy-Schwarz inequality, that the above computations based on Parseval identity make sense.
\end{proof}

\begin{remark}\label{rmk regularity and decay}{\em
Here we analyze which H\"older regularity must be imposed on $u$ in order to ensure the assumption 
\begin{equation}\label{decreas coef 0}
\sum_{k\in\Z}|\ell({\textstyle \frac{\pi k}{L}})||u_k|^2<+\infty
\end{equation}
of \Cref{critical point energy dispersive}.
Clearly, \eqref{decreas coef 0} holds if, for some $\epsilon>0$, $u$ satisfies
\begin{equation}\label{decreas coef 1}
\limsup_{k\to\pm\infty}|k|^{1+\epsilon}|\ell({\textstyle \frac{\pi k}{L}})||u_k|^2<+\infty.
\end{equation}
In particular, if the Fourier multipliers of $\LL$ satisfy \eqref{energy dispersive symbol even} and the growth estimate 
\eqref{energy dispersive symbol growth}, then \eqref{decreas coef 1} holds whenever
\begin{equation}\label{decreas coef 2}
\limsup_{k\uparrow+\infty}|k|^{\frac{1+\epsilon+p}{2}}|u_k|<+\infty.
\end{equation}

Finally, by a simple argument using integration by parts, it is easy to show that  $|u_k|\leq C|k|^{-m}$ whenever $u\in C^m(\R)$ for some $m\in\N$. On the other hand, a classical result states that if
$u\in C^r(\R)$ for some $0<r\leq1$ then $|u_k|\leq C|k|^{-r}$; see \cite[estimate $(4.1)$ in page 45]{Zygmund}. A combination of these two estimates shows that \eqref{decreas coef 2} will hold for some $\epsilon>0$ 
whenever $u\in C^\alpha(\R)$ for some $\alpha>(p+1)/2$.  Hence, in such case \eqref{decreas coef 0} will also hold. In particular, if $\ell=\ell_K$ then by \eqref{estimates l(k) lambda} it suffices to take $\alpha>s+1/2$.
}\end{remark}


\section{Periodic nonlocal P\'olya-Szeg\H o inequality}\label{ss:symmetry}
 
The purpose of this section is to prove \Cref{thm rearrangement}, that is, the periodic symmetric decreasing rearrangement inequality for $[\cdot]_K$, for three different classes of kernels $K$. Recall that in \Cref{section:appendix rearrangement.kernels} we will show that none of these three classes is contained in the other. 

A crucial ingredient of the proof of \Cref{thm rearrangement} is the following Riesz re\-arrange\-ment inequality on the circle found independently in 1976 by Baernstein and Taylor \cite{Baernstein Taylor} and by Friedberg and Luttinger \cite{FriedLutt}. Recall that, for a $2\pi$-periodic function $f$, in \Cref{ss:symmetry_intro} we defined the rearrangements $f^{*}$ and $f^{*\!\eper}$.

\begin{theorem}$($\cite[Theorem 2]{Baernstein Taylor}, \cite[Theorem 1]{FriedLutt}, \cite[Theorem 2]{Burchard Hajaiej}, \cite[Theorem 7.3]{Baernstein book}$)$
\label{thm:gy:sharp Friedberg Luttinger}
Let $f,h,g:\R\to \R$ be three nonnegative $2\pi$-periodic measurable functions. Assume that $g$ is even, as well as nonincreasing in $(0,\pi)$. 

Then, 
\begin{equation}
\label{eq:thm FriedLutt inequality}
    \int_{-\pi}^{\pi}dx\int_{-\pi}^{\pi} dy\,f(x)g(x-y)h(y)
    \leq 
    \int_{-\pi}^{\pi}dx\int_{-\pi}^{\pi}dy\, f^{*}(x)g(x-y)h^{*}(y).
\end{equation}
In addition, if $g$ is decreasing in $(0,\pi)$ 
and the left-hand side of \eqref{eq:thm FriedLutt inequality} is finite,
then
equality holds in \eqref{eq:thm FriedLutt inequality} if and only if at least one of the following conditions holds: 
\begin{itemize}
\item[$(i)$] Either $f$ or $h$ is constant almost everywhere.

\item[$(ii)$] There exists $z\in \R$ such that $f(x)=f^{*\!\eper}(x+z)$ and $h(x)=h^{*\!\eper}(x+z)$ for almost every $x\in\R$.\footnote{This is equivalent to the condition 
$f(\cdot-z)=f^{*}$ and $h(\cdot-z)=h^{*}$ 
a.e. in $(-\pi,\pi)$. Note also that, in case $(i)$, if $f$ is constant then it could happen that $h$ does not agree with any translation  of its periodic rearrangement.}
\end{itemize}
\end{theorem}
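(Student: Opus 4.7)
The plan is to reduce to characteristic functions by layer-cake, establish a set-theoretic inequality by two-point symmetrization on $\Sph^1$, and analyze the equality case.

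\textbf{Reduction to sets.} By the layer-cake formula, $f=\int_0^\infty \chi_{\{f>t\}}\,dt$ and $h=\int_0^\infty \chi_{\{h>r\}}\,dr$; and since $g$ is nonnegative, even, and nonincreasing on $(0,\pi)$, each superlevel set $\{g>s\}$ is a symmetric arc centered at $0$, so $g=\int_0^\infty \chi_{I_s}\,ds$ with $I_s$ symmetric arcs. Fubini together with trilinearity in $(f,g,h)$ reduces the statement to the set inequality: for measurable $A,B\subset \Sph^1$ and any symmetric arc $I$ centered at $0$,
\begin{equation*}
Q(A,B;I):=\intpi\!\intpi \chi_A(x)\,\chi_I(x-y)\,\chi_B(y)\,dx\,dy \le Q(A^*,B^*;I),
\end{equation*}
where $A^*$ is the arc centered at $0$ with $|A^*|=|A|$. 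The converse implications for the equality case are retrieved by tracking the level sets at which strict inequality would occur.

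\textbf{Two-point symmetrization.} For each $\xi\in\Sph^1$, let $\sigma_\xi$ be the reflection of $\Sph^1$ across the diameter through $\{\xi,\xi+\pi\}$, and define the polarization $A^{\sigma_\xi}$ orbit-by-orbit on pairs $\{x,\sigma_\xi x\}$: if $A$ contains both or neither element, $A^{\sigma_\xi}$ agrees with $A$; if exactly one, $A^{\sigma_\xi}$ retains the element in the semicircle centered at $\xi$. When $\sigma_\xi$ preserves $I$ (i.e.\ $\xi\in\{0,\pi/2\}$ modulo $\pi$), the change of variables $(x,y)\mapsto(\sigma_\xi x,\sigma_\xi y)$ leaves $\chi_I(x-y)$ invariant by evenness of $I$. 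A case-by-case inspection of the four configurations of $\{x,\sigma_\xi x\}\times\{y,\sigma_\xi y\}$ in $A\times B$ then yields the pointwise-on-orbits comparison
\begin{equation*}
Q(A,B;I)\le Q(A^{\sigma_\xi}, B^{\sigma_\xi};I).
\end{equation*}

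\textbf{Convergence to the symmetric rearrangement.} A single polarization about the $0$--$\pi$ axis enforces only reflection symmetry across that axis, not the connectedness of $A^*$ as an arc centered at $0$. To obtain $A^*$, one composes polarizations along the $I$-preserving axes with small rotations, following a Brock--Solynin type scheme adapted to $\Sph^1$. This produces sequences $A_n\to A^*$, $B_n\to B^*$ in $L^1(\Sph^1)$, and $L^1$-continuity of $Q(\cdot,\cdot;I)$ (via dominated convergence, using that the kernel $\chi_I$ is bounded) upgrades the local polarization estimates to the desired inequality.

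\textbf{Equality case and main obstacle.} Under strict monotonicity of $g$ on $(0,\pi)$ and finiteness of the integral, the family $\{I_s\}_{s>0}$ is strictly nested in a dense range of $s$, so equality in the original trilinear inequality forces $Q(A_t,B_r;I_s)=Q(A_t^*,B_r^*;I_s)$ for a dense set of triples, where $A_t:=\{f>t\}$ and $B_r:=\{h>r\}$. Equality in each polarization step is a rigid combinatorial condition on the four-point orbit configuration; iterating this rigidity over the generating family of axes forces the dichotomy: either one of $f,h$ is constant a.e.\ (case $(i)$), or both coincide with a \emph{common} translate of their symmetric decreasing rearrangements (case $(ii)$). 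The main obstacle is precisely the delicate compactness/convergence argument for iterated polarizations on $\Sph^1$, where Euclidean dilation tricks are unavailable, and the simultaneous extraction of the common translation $z$ for $f$ and $h$ from the rigidity analysis, which requires carefully coupling the polarization-equality conditions on both sets across the full generating family of axes.
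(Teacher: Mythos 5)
First, note that the paper does not prove this theorem: it is imported from the literature (Baernstein--Taylor and Friedberg--Luttinger for the inequality, Burchard--Hajaiej for the equality case), and the paper itself records that all of these references argue by polarization. Your outline therefore follows the same route as the cited proofs rather than a different one. Within that route, the reduction to characteristic functions via the layer-cake formula and the polarization inequality for the set functional $Q$ are sound in spirit, but two points are off. The kernel $\chi_I(x-y)$, with $I$ a symmetric arc, depends only on the circle distance between $x$ and $y$, so it is invariant under the change of variables $(x,y)\mapsto(\sigma_\xi x,\sigma_\xi y)$ for \emph{every} reflection $\sigma_\xi$, not only for $\xi\in\{0,\pi/2\}$; restricting to two axes and then patching with ``small rotations'' is both unnecessary and obscures the convergence argument, which needs the full (or a dense countable) family of polarizations, applied \emph{simultaneously} and with the same choice of preferred half to $A$ and $B$, together with a universal-approximation result of Brock--Solynin/Van Schaftingen type. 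Also, the two halves exchanged by the reflection across the diameter through $\{\xi,\xi+\pi\}$ are the semicircles centered at $\xi\pm\pi/2$, not the one centered at $\xi$.

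The genuine gap is the equality case, which is precisely the part of the theorem the paper needs for its symmetry results and which is the content of Burchard--Hajaiej's 2006 theorem. Your argument stops at the assertion that ``equality in each polarization step is a rigid combinatorial condition'' whose iteration ``forces the dichotomy.'' This is not a proof. Equality of $Q(A,B;I)$ and $Q(A^{\sigma},B^{\sigma};I)$ for a single polarization carries very little information; what one must show is that equality of the symmetrized and unsymmetrized functionals forces, for (essentially) every reflection $\sigma$, the alternative that either both sets are invariant under polarization or both are carried onto their reflections, and one must then rule out mixed behaviour across different axes and different level sets $A_t=\{f>t\}$, $B_r=\{h>r\}$ in order to extract a \emph{single} translation $z$ working for all of them, while correctly isolating the degenerate case in which one of the two families of level sets is trivial (i.e., $f$ or $h$ constant a.e.). None of this is carried out, and it is exactly where the difficulty lies; as written, the proposal establishes (modulo the fixes above) only the inequality \eqref{eq:thm FriedLutt inequality}, not the characterization of equality.
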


Inequality \eqref{eq:thm FriedLutt inequality} was first discovered, independently, in \cite{Baernstein Taylor} and \cite{FriedLutt}. Both references contain more general inequalities: \cite{Baernstein Taylor} deals with the sphere $\mathbb{S}^n$, whereas \cite{FriedLutt} deals with a product of more than three functions. 
The result \cite[Theorem 7.3]{Baernstein book} by Baernstein is also more general than \eqref{eq:thm FriedLutt inequality}, as it deals with a Riesz rearrangement inequality on the sphere $\mathbb{S}^n$ and not only on the circle. Moreover, \cite{Baernstein book} treats more general functions of $f(x)$ and $h(y)$ than simply the product $f(x)h(y)$. 
Inequality \eqref{eq:thm FriedLutt inequality} can be found in \cite{Baernstein Circle}, another work of Baernstein, in a more general form where $g$ is also rearranged.

Instead, the statement in \Cref{thm:gy:sharp Friedberg Luttinger} concerning equality in \eqref{eq:thm FriedLutt inequality} follows from Burchard and Hajaiej \cite[Theorem 2]{Burchard Hajaiej}, who treated the case of equality in $\mathbb{S}^n$ for the first time, even for $n=1$. For this result, we also cite \cite[Theorem 7.3]{Baernstein book} since, being less general than \cite{Burchard Hajaiej}, fits precisely with our statement in \Cref{thm:gy:sharp Friedberg Luttinger}.

We find reference \cite{Baernstein book} to be the simplest one for looking up all assertions of \Cref{thm:gy:sharp Friedberg Luttinger}.

All the above mentioned references, except for \cite{Baernstein Circle}, use the {\em method of polarization} to prove  \Cref{thm:gy:sharp Friedberg Luttinger}. When finishing this article, we have found that the method of polarization has been also used in the following very recent papers on fractional equations. They do not deal, however, with periodic solutions. First, DelaTorre and Parini \cite{delatorre parini polar} and Dieb, Ianni, and Salda\~na \cite{DiebIanniSaldana polar} use the method to prove uniqueness of least energy solutions for fractional equations, while Benedikt, Bobkov, Dhara, and Girg \cite{BenBobDharaGirg Polar} employ it to establish the nonradiality of the second eigenfunction of the fractional Laplacian in a ball. 

\begin{proof}[Proof of \Cref{thm rearrangement}]
Since the periodic Riesz rearrangement inequality \eqref{eq:thm FriedLutt inequality} requires the functions $f$ and $h$ to be nonnegative, we first need to reduce the proof to the case $u\geq 0$.

First, observe that $||u(x)|-|u(y)||\leq|u(x)-u(y)|$ for all $x$ and $y$ in $\R$, and that 
$u^{*\eper}=|u|^{*\eper}$. Thus, to prove part $(a)$ we can assume without loss of generality that $u$ is nonnegative.

Regarding part $(b)$, assume that
$[u^{*\eper}]_K= [u]_K<+\infty.$ We claim that $u$ does not change sign (in the usual ``almost everywhere'' or ``essential'' sense, since we only assume $u\in L^2(-L,L)$). Indeed, otherwise there would exist two subsets 
$U$ and $V$ of $\R$ with positive measure such that 
$||u(x)|-|u(y)||<|u(x)-u(y)|$ for all $x\in U$ and $y\in V$. In addition, due to periodicity, we can assume that $|x-y|<L$ for all $x\in U$ and all $y\in V$ making the sets $U$ and $V$ smaller if necessary.  Let us now use that $K>0$ in $(0,L)$ (actually $K>0$ everywhere in cases $(i)'$ and $(ii)'$) and that $[|u|^{*\eper}]_K\leq[|u|]_K$, since we will have already proved part $(a)$ for nonnegative functions. We deduce that
$[|u|^{*\eper}]_K\leq[|u|]_K<[u]_K=[u^{*\eper}]_K$, leading to a contradiction with the fact that $u^{*\eper}=|u|^{*\eper}$.

To prove part $(b)$ of the theorem, since we now know that $u$ does not change sign, we will later assume $u\geq0$ and prove that $u=u^{*\eper}(\cdot+z)$ for some $z\in\R$. In case that $u\leq0$ we may replace $u$ by $-u$ (since
$(-u)^{*\eper}=u^{*\eper}$ and $[-u]_K=[u]_K$) and then conclude $u=-u^{*\eper}(\cdot+z)$.

Since in one step of the proof ---namely, in \eqref{eq:gy:splitting thanks Ki} below--- we will need the kernel to be integrable around its singularity, we first approximate it by bounded kernels. We will do this in a different way for each of the three classes $(i)$, $(ii)$, and $(iii)$. This first step is not necessary if $K$ is bounded. 

We first deal with the case $(i)$ of convex kernels.
Let $\{t_i\}_{i\in\N}$ be a sequence of positive numbers tending to zero and such that $K$ is differentiable at $t_i$. We replace $K$ in $(0,t_i)$ by the affine function which is tangent to the graph of $K$ at the point $(t_i,K(t_i))$. Since $K$ is convex, we obtain in this way a nondecreasing sequence of bounded and convex kernels $K_i$ in $[0,+\infty)$ converging to $K$ as $t_i\downarrow0$. By monotone convergence, it thus suffices to show the claimed rearrangement inequality for each of the bounded convex kernels $K_i$.

Now we explain the approximation for $K$ as in $(ii)$. In this case we consider, for $i=1,2,3,\ldots,$ the kernels $K_i(t):=K(\sqrt{t^2+1/i^2})$ for $t\geq0$. If $K$ is as in \eqref{kernel Laplace transform} for some nonnegative measure~$\mu$, then 
\begin{equation}\label{kernel Laplace transform_i}
K_i(t)=\int_0^{+\infty}d\mu(r)\,e^{-r/i^2}e^{-t^2r}
\quad\text{for all $t\geq0$}.
\end{equation}
By the comments following \Cref{thm rearrangement}, $\tau>0\mapsto K_i(\tau^{1/2})$ is completely monotonic since it is the Laplace transform of the nonnegative measure 
$e^{-r/i^2}d\mu(r)$.
Also, by monotone convergence we see that $K_i(t)\uparrow K(t)$ for all $t>0$ as $i\uparrow+\infty$. Finally, for any given $i\geq1$, we have $K_i(t)\leq K_i(0)=K(1/i)<+\infty$, and hence $K_i$ are bounded kernels in~$[0,+\infty)$. Once again, by monotone convergence, it thus suffices to show the claimed rearrangement inequality for each of the bounded kernels $K_i$.

In the case $(iii)$, we define $K_i(t):=K(t)$ if $t\geq 1/i$ and $K_i(t)=:K(1/i)$ if $t\in (0,1/i).$ We have that $K_i$ is bounded and still belongs to the class $(iii)$.

Now, in whichever of the three ways that $K_i$ has been defined, set
\begin{equation}\label{defi Ki K convex}
\overline K_i(t):=\sum_{k\in\Z}K_i(|t+2kL|)
\quad \text{for $t\in \R$}.
\end{equation}
Note that $\overline K_i$ is bounded uniformly in $t\in\R$ by the upper bound \eqref{defi laplacian_K growth1} and since $K_i$ is bounded. Using that $u$ is $2L$-periodic, we write 
\begin{equation}
 \label{eq:gy:int of Ki and KiBar} 
\begin{split}
2[u]_{K_i}^2=\intL dx\!\int_\R dy\,|u(x)-u(y)|^2&K_i(|x-y|)
=\intL dx\!\intL dy\,|u(x)-u(y)|^2\overline K_i(x-y).
\end{split}
\end{equation}
Since $\overline K_i$ is integrable around the origin, we can expand $|u(x)-u(y)|^2$ to get
\begin{equation}
\label{eq:gy:splitting thanks Ki}
\begin{split}
\intL dx\intL dy\,|u(x)-u(y)|^2\overline K_i(x-y)
&=\intL dx\,u(x)^2\intL dy\,\overline K_i(x-y)\\
&\quad+\intL dy\, u(y)^2\intL dx\,\overline K_i(x-y)\\
&\quad-2\intL dx\intL dy\, u(x)u(y)\overline K_i(x-y).
\end{split}
\end{equation}
Note that $\overline K_i$ is even with respect to $x=0$, $2L$-periodic, and bounded. Hence, we have that
$$\int_{-L}^Ldy\,\overline K_i(x-y)=\int_{-L}^L\overline K_i<+\infty.$$
In addition, $\intL u^2=\intL (u^{*\eper})^2$. Therefore, the claimed inequality of the theorem will follow if we show that
\begin{equation}\label{rearrange eq4}
\begin{split}
\intL dx\intL dy\, u(x)u(y)\overline K_i(x-y)
\leq\intL dx\intL dy\, u^{*\eper}(x)u^{*\eper}(y)\overline K_i(x-y).
\end{split}
\end{equation}

To prove \eqref{rearrange eq4}, by scaling we may assume that $L=\pi$. According to \Cref{thm:gy:sharp Friedberg Luttinger}, to get \eqref{rearrange eq4} it is enough to show that $\overline K_i$ is a nonnegative and nonincreasing function in $(0,\pi)$. That $\overline K_i\geq 0$ is clear from its definition in terms of $K_i$.  We now show the monotonicity of $\overline K_i$ in two different ways, depending on whether we are in assumption $(i)$ or $(ii)$ of the theorem. Note that in the case $(iii)$ it holds that $\overline  K_i (t)=K_i(|t|)$ for all $t\in (-\pi,\pi),$ and hence the monotonicity of $\overline  K_i$ is an immediate consequence of the assumptions in $(iii)$.

\medskip
\noindent\underline{{\em Proof of the inequality for $K$ as in $(i)$}\,:} 
\smallskip

To show the monotonicity of $\overline{K}_i$ in $(0,\pi)$, we rename by $-k-1$, with $k=0,1,2,\ldots$,  those indices which are negative in the definition of  $\overline{K}_i$. In this way we obtain 
\begin{equation}
\overline{K}_i(t)=\sum_{k\geq0}\big(K_i(2k\pi+t)+K_i(2(k+1)\pi-t)\big)
\quad\text{ for }t\in (0,\pi).
\end{equation}
Thus, if $0<t_1<t_2<\pi$, we have 
\begin{equation}
\label{eq:gy:monoton of Kibar}
  \begin{split}
   \overline{K}_i(t_1)-\overline{K}_i(t_2)=&\sum_{k\geq0}\big(K_i(2k\pi+t_1)+K_i(2(k+1)\pi-t_1)\\
   &
   \qquad -K_i(2k\pi+t_2)-K_i(2(k+1)\pi-t_2)\big).
   \end{split}
\end{equation}

We now observe that if $f$ is a convex function, $a<b<c<d$, and $a+d=b+c,$ then 
\begin{equation}
  \label{eq:strict conv in i}
    f(b)+f(c)\leq f(a)+f(d), \quad\text{  with strict inequality if $f$ is stictly convex in $[a,d]$}.
\end{equation}
This follows from convexity, since for $\lambda=(d-b)/(d-a)\in (0,1)$ we have  that $b=\lambda a+(1-\lambda) d$ and $c=(1-\lambda) a+\lambda d$. 

Using \eqref{eq:strict conv in i} with $a=2k\pi+t_1$, $b=2k\pi+t_2$, $c=2(k+1)\pi-t_2$, and $d=2(k+1)\pi-t_1$, we see that each term in the sum over $k\geq 0$ of \eqref{eq:gy:monoton of Kibar} is nonnegative. Thus, $\overline K_i$ is nonincreasing in $(0,\pi)$. This leads to \eqref{rearrange eq4} and finishes the proof of the inequality stated in the theorem for convex kernels $K$.

\medskip
\noindent\underline{{\em Case of equality for $K$ as in 
$(i)'$}\,:} 
\smallskip

We now assume that $K$ is convex in $(0,+\infty)$ and strictly convex in $(c,+\infty)$ for some $c>0$. Recall that we may assume  $u\geq 0$.

Set $M(t):=K(t)-K_1(t),$ where $K_1$ is the kernel constructed at the beginning of the proof for $i=1$. Recall that $t_1$ is a point of differentiability of $K$ (for which we may assume that $0<t_1<c$) and that $K_1$ is characterized by: $K_1=K$ in $(t_1,+\infty)$, 
$K_1$ is affine in $(0,t_1)$, $K_1$ is convex in 
$(0,+\infty)$, and $K_1\leq K$ in $(0,+\infty)$. Observe that both $K_1$ and $M$ are nonnegative convex functions in $(0,+\infty)$ that satisfy the upper bound \eqref{defi laplacian_K growth1}. We now split the integral  
\begin{equation}\label{split K in K_1,M_bis}
\int_{-L}^Ldx\int_{\R}dy\,|u(x)-u(y)|^2K(|x-y|)=\EE_1(u)+\mathcal{M}(u),
\end{equation}
where
\begin{equation}\label{split K in K_1,M}
\begin{split}
\EE_1(u)&:=\int_{-L}^Ldx\int_{\R}dy\,|u(x)-u(y)|^2K_1(|x-y|),
 \\
\mathcal{M}(u)&:= \int_{-L}^Ldx\,\int_{\R}dy\,|u(x)-u(y)|^2 M(|x-y|).
\end{split}
\end{equation}
By the inequality of  \Cref{thm rearrangement} $(i)$ (applied now to the kernels $K_1$ and $M$) 
we have $\EE_1(u^{*\eper})\leq \EE_1(u)$ and $\mathcal{M}(u^{*\eper})\leq \mathcal{M}(u).$ Combining these two inequalities with the assumption that $[u^{*\eper}]_K= [u]_K<+\infty$, we deduce $\EE_1(u^{*\eper})= \EE_1(u)$ (observe here that all these quantities are finite, 
since $[u^{*\eper}]_K= [u]_K<+\infty$, and $0\leq K_1\leq K$ and $0\leq M\leq K$). 
It now follows from \eqref{eq:gy:int of Ki and KiBar} and \eqref{eq:gy:splitting thanks Ki} (which required the kernel $K_1$ to be integrable) that there must be equality in \eqref{rearrange eq4} for $i=1$, that is, for the kernel $K_1$. 

Now, as before, by scaling we may assume $L=\pi$. From \eqref{eq:gy:monoton of Kibar} and \eqref{eq:strict conv in i} we obtain that $\overline{K}_1$ is decreasing in $(0,L)=(0,\pi)$, since $K_1$ is convex in $(0,+\infty)$ and strictly convex in $(c,+\infty)$ 
(and, hence, for each $k>c/(2\pi)$ the  $k$-th summand  in \eqref{eq:gy:monoton of Kibar} is positive, and all others are nonnegative). As there must be equality in \eqref{rearrange eq4} for $\overline{K}_i=\overline{K}_1$, we deduce from the strict Riesz rearrangement inequality on the circle (\Cref{thm:gy:sharp Friedberg Luttinger} $(i)$ and $(ii)$; which requires the kernel $\overline{K}_1$ to be decreasing in $(0,\pi)$ and also $u\geq 0$) that a translate of $u$ is equal to $u^{*\eper}.$  

\medskip
\noindent\underline{{\em Proof of the inequality for $K$ as in $(ii)$}\,:} 
\smallskip

To check the monotonicity of $\overline K_i$, from \eqref{kernel Laplace transform_i} we see that
\begin{equation}\label{overlineKi_heatkernel}
\begin{split}
\overline K_i(t)=\sum_{k\in\Z}K_i(|t+2kL|)
=\sum_{k\in\Z}\int_0^{+\infty}d\mu(r)\,e^{-r/i^2}
e^{-(t+2kL)^2r}
=\int_0^{+\infty}d\mu(r)\,e^{-r/i^2}\,\Phi(t,r),
\end{split}
\end{equation}
where $\Phi(t,r):=\sum_{k\in \Z}e^{-(t+2kL)^2r}$. Now we claim that, for every $r>0$, the function
$t\mapsto \Phi(t,r)$ is  decreasing in $(0,L)$.
This follows from the fact that the fundamental solution of the heat equation with $2L$-periodic boundary conditions is decreasing in $(0,L)$; we refer to \cite[Appendix~B]{Cabre Csato Mas delaunay} where one can find several references for this result, but also an elementary self-contained proof. Therefore, by \eqref{overlineKi_heatkernel} and since the measure $\mu$ is nonnegative, $\overline K_i$ is nonincreasing in 
$(0,L)$. By the comments right after \eqref{rearrange eq4}, this finishes the proof.

\medskip
\noindent\underline{{\em Case of equality for $K$ as in 
$(ii)'$}\,:}  
\smallskip

The arguments will be similar to the ones above for the case $(i)'$. Recall that we may assume $u\geq0$.
Set $M(t):=K(t)-K_1(t),$ where $K_1(t)=K(\sqrt{t^2+1})$. Hence 
$$
  M(t)=\int_0^{+\infty}d\mu(r)\,(1-e^{-r})e^{-t^2 r} .
$$
Observe that both $K_1(\sqrt{\tau})$ and $M(\sqrt{\tau})$ are completely monotonic functions of $\tau\in(0,+\infty)$
as can be seen by differentiating under the integral sign.
Both also satisfy the upper bound \eqref{defi laplacian_K growth1}. 
We now perform the splitting \eqref{split K in K_1,M_bis} but using the current definitions of $K_1$ and $M$ in \eqref{split K in K_1,M}. From part $(ii)$ of the theorem, we know that $\EE_1(u^{*\eper})\leq \EE_1(u)$ and $\mathcal{M}(u^{*\eper})\leq \mathcal{M}(u).$ 
Now, combining these two inequalities 
with the assumption that $[u^{*\eper}]_K= [u]_K<+\infty$, we deduce $\EE_1(u^{*\eper})= \EE_1(u)$. 
Since $K_1$ is bounded in $[0,+\infty)$, we can argue as in \eqref{eq:gy:int of Ki and KiBar} and \eqref{eq:gy:splitting thanks Ki} to deduce that there must be equality in \eqref{rearrange eq4} for $i=1$. 
As before, by scaling we can also assume $L=\pi$. 
Since we know that $\overline K_1$ is decreasing in 
$(0,\pi)$ by \eqref{overlineKi_heatkernel} (we use here the hypothesis  $K\not\equiv 0$ to ensure that $\mu\not\equiv 0$), from the strict Riesz rearrangement inequality on the circle (see \Cref{thm:gy:sharp Friedberg Luttinger} 
 $(i)$ and $(ii)$) we deduce that a translate of $u$ is equal to $u^{*\eper}.$
 
\medskip
\noindent\underline{{\em Proof of the inequality for $K$ as in 
$(iii)$}\,:} 
\smallskip 

As pointed out after \eqref{rearrange eq4}, after rescaling to have $L=\pi$,  the proof in this case is clear, since $\overline{K}_i=K_i(|\cdot|)$ is nonincreasing in $(0,\pi)$ for all $i\geq 1$. 
 
\medskip
\noindent\underline{{\em Case of equality for $K$ as in 
$(iii)'$}\,:} 
\smallskip

The argument is very similar to the previous ones. From them we see that it suffices to split $K$  as $K=K_0+M$, where $K_0$ and $M$ both satisfy the hypothesis of \Cref{thm rearrangement} $(iii)$, and in addition $K_0$ is bounded and decreasing in $(0,L)$.\footnote{Note that using $K_1$ (i.e., the cutoff of $K$ described in the first step of the proof of the theorem for case $(iii)$) as a candidate for $K_0$ will not work, since $K_1$ is constant near $0$. }

We accomplish this as follows. Define $K_0:[0,+\infty)\to [0,1]$ by
$$
  K_0(t):=1-\frac{1}{K(t)+1}=\frac{K(t)}{K(t)+1}\,.
$$
Note that $K_0$ is bounded, vanishes in $[L,+\infty)$, and is decreasing in $[0,L]$ (since $K$ is decreasing in this interval). It remains to check that $M(t):=K(t)-K_0(t)$ is nonnegative, 
satisfies the upper bound \eqref{defi laplacian_K growth1},
vanishes in $[L,+\infty)$, and is nonincreasing in $(0,L)$. The first three properties are obvious, since $M(t)$ is given by
$$
  M(t)=\frac{K(t)^2}{K(t)+1}\leq K(t).
$$
Concerning the monotonicity of $M$, let $0< t_1<t_2\leq L$. 
Since $K(t_1)\geq K(t_2)$, we have
\begin{align}
  \frac{K^2(t_1)}{K^2(t_2)}\geq \frac{K(t_1)}{K(t_2)}
  \geq \frac{K(t_1)+1}{K(t_2)+1},
\end{align} 
which shows the monotonicity of $M$.
\end{proof}

\section{A strong maximum principle for periodic weak solutions}
\label{section:max principle}



To prove the strict monotonicity in the statements of \Cref{thm: frac Laplacian Intro} and \Cref{coro constrained minim lagrangian2}, we need the strong maximum principle \Cref{theorem:max principle} for periodic solutions. We had never seen such type of result for integro-differential equations in the periodic setting.

Towards the proof, let us try to determine the sign of $\LL_Kv(x_0)$ at some $x_0\in (0,L)$ where $v(x_0)=0$ by splitting the integral over $\R$ in the definition of $\LL_K$  into intervals of length $L$. Recall that, in \Cref{theorem:max principle}, $v$ is odd, $2L$-periodic, and with $v\leq0$ in $(0,L)$. For the first two intervals, the integral of $-v(x)K(|x_0-x|)$ over $(-L,0)\cup(0,L)$ will give a positive sign, since $v$ is odd and the negative mass of $v$ is closer to $x_0$ than the positive one. Instead, by the same reason, the integral over $(-2L,-L)\cup(L,2L)$ will give a negative sign, and so on, leading to an alternating sequence. There are thus competing terms and it is not clear why 
$\LL_Kv(x_0)$ should be positive. If $K(\sqrt{\cdot})$ is completely monotonic we bypass this difficulty  using the Laplace transform, 
as in the proof of \Cref{thm rearrangement},  and obtain a sign for $\LL_Kv(x_0)$. Whereas if $K$ is convex as in $(i)'$, a simpler direct proof is indeed possible, by comparing groups of four intervals of length $L$ (and not only two). The details go as follows.

\begin{proof}[Proof of \Cref{theorem:max principle}.]
 We provide first a detailed proof for the case when $K(\sqrt{\cdot})$ is completely monotonic. At the end of the proof we show why the same proof works for the slightly simpler cases $(i)'$ and $(iii)'$, with the obvious adaptations.
 
For $K(\sqrt{\cdot})$ completely monotonic, we have split the proof in two cases for the sake of clarity of the ideas, although the second proof is valid for both cases.

\medskip
\noindent\underline{{\em Case $K(\sqrt{\cdot})$ completely monotonic and $s<1/2$}\,:} 
\smallskip

Recall that if $s<1/2$, then the principal value in the definition of $\LL_K v(x_0)$ is not necessary, since $x\mapsto |v(x_0)-v(x)||x_0-x|^{-1-2s}$ belongs to $L^1(\R)$; recall that $v\in C^{2s+\gamma}(\R)$. This justifies the forthcoming calculations.

We prove the result by contradiction. Assume that $v$ is not identically zero and that there exists $x_0\in (0,L)$ such that $v(x_0)=0$.  
Then, using the  expression for $K$ in \eqref{kernel Laplace transform} based on the Laplace transform, we have
\begin{equation}\label{eq:Appendix maxprinc G}
\begin{split}
  \LL_K v(x_0)&=\int_\R dx\,
(v(x_0)-v(x))K(|x_0-x|)
  =
  -\int_0^{+\infty}d\mu(r)\,\int_\R dx\, v(x)e^{-|x_0-x|^2r}
  \\
  &=
  -\int_0^{+\infty}d\mu(r)\,\sum_{k\in\Z}\int_{-L}^{L} dx\, v(x)e^{-|x_0-x+2kL|^2r}
  \\
  &=-\int_0^{+\infty}d\mu(r)\int_{-L}^{L} dx\, v(x)\Phi(x_0-x,r),
\end{split}
\end{equation}
where $\Phi(t,r):=\sum_{k\in\Z}e^{-(t+2kL)^2 r}.$

We know that the function
$t\mapsto \Phi(t,r)$ is even, $2L$-periodic, and decreasing in $(0,L)$, as explained in the proof of \Cref{thm rearrangement} $(ii)$; see the comment right after \eqref{overlineKi_heatkernel}.  Splitting the integration domain as $(-L,L)=(-L,0)\cup (0,L)$ and using that $v$ is odd, one obtains that
\begin{equation}\label{eq:maxprinc symmmetrize}
  \int_{-L}^{L} dx\, v(x)\Phi(x_0-x,r)
  =\int_0^L dx\, v(x)\left(\Phi(x_0-x,r)-\Phi(x_0+x,r)\right).
\end{equation}
Since $\Phi(\cdot,r)$ is even, $2L$-periodic, and decreasing in $(0,L)$ it holds that 
$$
  \Phi(x_0-x,r)=\Phi(|x_0-x|,r)>\Phi(x_0+x,r)
$$
for all $x_0\in (0,L)$ and all $x\in(0,L)$. This  follows from the inequalities $|x_0-x|<x_0+x$ and $|x_0-x|<2L-(x_0+x)$, which mean that $|x_0-x|$ is closer to $0$ than $x_0+x$ to $0$ and to $2L$. 

Finally, using that $v\leq 0$ in $(0,L)$ and $v\not\equiv 0$ we obtain that \eqref{eq:maxprinc symmmetrize} is negative and thus, by \eqref{eq:Appendix maxprinc G}, that $\LL_K v(x_0)>0.$ This contradicts the inequality $\LL_K v(x_0)- c(x_0)v(x_0)\leq 0$, since we are assuming that $v(x_0)=0$.

\medskip
\noindent\underline{{\em Case $K(\sqrt{\cdot})$ completely monotonic and $1/2\leq s<1$}\,:} 
\smallskip

The proof follows that of the case $s<1/2$, with the only difference that we must to take into account the principal value in the definition of $\LL_K$. We again assume that $v(x_0)=0$ for some $x_0\in (0,L).$ In order to have more symmetric expressions in what follows, we observe, since $v$ is bounded, that 
$$
  \lim_{\epsilon\downarrow 0} 
  \int_{A_{\epsilon}}
  dx\, \frac{v(x_0)-v(x)}{|x_0-x|^{1+2s}}=0, 
  \quad\text{ where } 
  A_{\epsilon}:=\bigcup_{k\in \Z\setminus\{0\}}\left((x_0-\epsilon,x_0+\epsilon)+2kL\right),
$$
and
$$
  \lim_{\epsilon\downarrow 0} 
  \int_{D_{\epsilon}}
  dx\, \frac{v(x_0)-v(x)}{|x_0-x|^{1+2s}}=0, 
  \quad\text{ where } 
  D_{\epsilon}:=\bigcup_{k\in \Z}\left((-x_0-\epsilon,-x_0+\epsilon)+2kL\right).
$$
Hence,  proceeding as in \eqref{eq:Appendix maxprinc G} and \eqref{eq:maxprinc symmmetrize} we obtain (using also the upper bound \eqref{defi laplacian_K growth1})
\begin{equation}
\begin{split}
  \LL_K v(x_0)&=\lim_{\epsilon\downarrow 0}\int_{\R\setminus(x_0-\epsilon,x_0+\epsilon)} dx\,
(v(x_0)-v(x))K(|x_0-x|)
\\
&=\lim_{\epsilon\downarrow 0}\int_{\R\setminus\left(A_\epsilon\cup D_\epsilon\cup (x_0-\epsilon,x_0+\epsilon)\right)} dx\,
(v(x_0)-v(x))K(|x_0-x|)
\\
  &=-\lim_{\epsilon\downarrow 0}\int_0^{+\infty}d\mu(r)\,\int_{(-L,L)\setminus \left((x_0-\epsilon,x_0+\epsilon)\cup (-x_0-\epsilon,-x_0+\epsilon)\right)} dx\, v(x)\Phi(x_0-x,r)
  \\
  &=
  \lim_{\epsilon\downarrow 0}\int_0^{+\infty}d\mu(r)\,\int_{(0,L)
  \setminus(x_0-\epsilon,x_0+\epsilon)}
  dx\, (-v(x))\left(\Phi(x_0-x,r)-\Phi(x_0+x,r)\right).
\end{split}
\end{equation}
We assume again, to reach a contradiction, that there exists $z_0\in (0,L)$ such that $v(z_0)<0.$ By what we have seen in the previous case $s<1/2$ concerning the function $\Phi$, we know that the integrand satisfies
$$
-v(x)\left(\Phi(x_0-x,r)-\Phi(x_0+x,r)\right)\geq 0
\quad\text{ for all }x\in (0,L).
$$ 
Hence, $\LL_K v(x_0)$ is a limit of a nondecreasing sequence in $\epsilon$, a sequence which, moreover, is positive for $\epsilon$ small enough (precisely when $z_0\notin [x_0-\epsilon,x_0+\epsilon]$). This shows that $\LL_K v(x_0)>0$, which leads to the contradiction. 

\medskip
\noindent\underline{{\em Case $K$ convex or $K$ as in  $(iii)'$}\,:} 
\smallskip

In this case \eqref{eq:Appendix maxprinc G} must be replaced by
$$
  \LL_K v(x_0)=-\int_{-L}^L dx\,v(x)\overline{K}(x_0-x),
  \quad\text{ where } \overline{K}(t):=\sum_{k\in\Z}K(|t+2kL|).
$$
Note that $\overline{K}$ has the same properties as $\Phi(\cdot,r)$: it is even, $2L$-periodic, and decreasing in $(0,L).$ The third property is obvious for $K$ as in $(iii)'$. Instead, for $K$ as in $(i)'$, it follows exactly arguing as in \eqref{eq:gy:monoton of Kibar}  and \eqref{eq:strict conv in i} (with $K_i$ and $\pi$ replaced by $K$ and $L$), and as in the last paragraph of the proof of \Cref{thm rearrangement} $(b)$ for $(i)'$. Hence, the entire argument of the proof in case $(ii)'$ applies to the current cases $(i)'$ and $(iii)'$, with the only modification that $\Phi$ is to be replaced by $\overline{K}$ and the integral in $d\mu(r)$ is to be removed.
\end{proof}

\section{Proofs of \Cref{thm: frac Laplacian Intro} and \Cref{coro constrained minim lagrangian2}}
\label{section:proof of thm 1.1 and corollary constr. min}

In this section we prove the symmetry and monotonicity results for  constrained minimizers, \Cref{coro constrained minim lagrangian2} and \Cref{thm: frac Laplacian Intro} ---the latter being essentially a special case of \Cref{coro constrained minim lagrangian2}.

\begin{proof}[Proof of \Cref{coro constrained minim lagrangian2}]
Recall that $u^{*\eper}\chi_{(-L,L)}$ is the Schwarz rearrangement of the absolute value $|u|\chi_{(-L,L)}$. Thus it will be useful to
 first assume that $u$ is nonnegative. With $u\geq0$, it follows that
$$
  \int_{-L}^L G(u)=\int_{-L}^L G(|u|)=\int_{-L}^L G(u^{*\eper}).
$$
The last equality follows from the fact that $u^{*\eper}$ is a rearrangement of $|u|$ and by standard properties of rearrangements; see for instance \cite[Section 3.3]{Lieb Loss}. For the same reason the constraint $\int_{-L}^L \widetilde{G}(v)=c$ is satisfied by $u^{*\eper}$ too.
Thus, $u^{*\eper}\in L^\infty(\R)$ is an admissible competitor.

Since $u$ is a constrained minimizer of $\EE_{\LL_K}$, we deduce that $[u]_K\leq [u^{*\eper}]_K$. But the reversed inequality also holds by \Cref{thm rearrangement} $(a)$. We conclude that there is equality, and by \Cref{thm rearrangement} $(b)$, that $u=\pm u^{*\eper}(\cdot+z)$ for some $z\in\R$. Since $u\geq0$, we must have $u= u^{*\eper}(\cdot+z)$. From this, the first conclusion of the corollary (i.e., after a translation, $u$ is even and nonincreasing in $(0,L)$) follows.

Note that, when assuming $u\geq 0$, we do not need the condition  $u\in L^{\infty}(\R)$ in the  previous argument in case we minimize among a class of functions which are not bounded, such as for instance $L^2(-L,L)$.

Let now $u$ be arbitrary (not necessarily nonnegative). In view of the assumption $u\in L^{\infty}(\R)$ there exists a constant $d$ such that $u+d\geq 0$. Define now
$$
  G^d(t):=G(t-d),\quad\text{ and }\quad \EE_{\LL_K}^d(v):=
  \frac{1}{4}[v]_K^2-\int_{-L}^L G^d(v),
$$
and $\widetilde{G}^d(t):=\widetilde{G}(t-d).$ It is easy to check that if $u$ is a constrained minimizer of $ \EE_{\LL_K}$ then $u+d$ is a constrained minimizer of $\EE_{\LL_K}^d$ under the constraint $\int_{-L}^L \widetilde{G}^d(v)=c$. Applying now the corollary to the nonnegative constrained minimizer  $u+d$ we obtain that 
$$
  u= (u+d)^{*\eper}(\cdot +z)-d\quad\text{ for some }z\in \R.
$$
Thus $u$ has the claimed form, i.e., $u$ is even and nonincreasing in $(0,L)$.

Let us now prove the second statement of the corollary. 
Note that, since 
$u\in L^\infty(\R)$ and $G$ and $\widetilde{G}$ belong to $C^{2+\delta}(\R)$ for some $\delta>0$, both $G(u)$ and $g(u)$ belong to $L^\infty(\R)$. This justifies the forthcoming arguments. Also observe that for a constrained minimizer $u$ the semi-norm $[u]_K$ must be finite, since $G(u)$ is bounded. 

First, since $\EE_{\LL_K}(u)\leq \EE_{\LL_K}(v)$ for all $2L$-periodic functions $v\in L^\infty(\R)$, $u$ is a critical point of $\EE_{\LL_K}$ among $2L$-periodic smooth perturbations satisfying the constraint and, thus, by \Cref{critical points semilinear periodic} (and the paragraph following it), 
$\LL_K u =g(u)+\lambda \widetilde{g}(u)$ in $\R$ in the periodic weak sense for some $\lambda\in\R$. Thanks to \eqref{fl eq0} in \Cref{fl l1}, this last equation also holds in the periodic distributional sense. 

We can now apply the regularity \Cref{fthm:Holder reg periodic} $(ii)$ (with $\beta=1+\delta$) to the equation 
\begin{equation}
\label{eq:gy:c1beta_right_side}
  \LL_K u=h(u)\quad\text{in $\R,\quad$ where }\quad h:=g+\lambda\widetilde{g}   
  \in C^{1+\delta}(\R).
\end{equation}
 Therefore we obtain  that $u\in C^{1+\nu}(\R)$ for some $\nu>2s$. Hence $u'\in C^{\nu}(\R)$ and we can differentiate the equation to obtain
$$
  \LL_K u'=h'(u) u'\quad\text{ in }\R.
$$
In view of the evenness and monotonicity result of the corollary we know that $u'$ is odd and $u'\leq 0$ in $(0,L)$. Applying now \Cref{theorem:max principle} to $v=u'$, we obtain that $u'<0$ in $(0,L)$, unless $u$ is constant.   
\end{proof}

\begin{remark}\label{remark: gen const}
{\em The statement of \Cref{coro constrained minim lagrangian2} dealing with the monotonicity (but not the strict monotonicity) remains true for more general constraints than those of the form $\int_{-L}^L \widetilde{G}(u)=c.$ Indeed, any constraint which is preserved by the symmetric decreasing rearrangement in $(-L,L)$ can be assumed, as long as  $G$ is even (or equivalently, depends  only on the absolute value of its argument). Under such assumptions the proof remains identical, with the only difference that the first step ---assuming $u$ to be nonnegative--- remains superfluous. 

An example for such a constraint is the quasi-norm in the Lorentz space $L^{p,q}(-L,L)$, which is given for 
$1\leq p<+\infty$ and $1\leq q\leq+\infty$ by
\begin{equation}
\|u\|_{L^{p,q}(-L,L)}:=
\begin{cases}
\Big(p{\displaystyle \int_0^{+\infty}}\!dt\,
t^{q-1} \big|\{x\in(-L,L):\,|u(x)|>t\}\big|^{q/p}\Big)^{1/q},
&1\leq q<+\infty,\\
\sup_{t>0}t\big|\{x\in(-L,L):\,|u(x)|>t\}\big|^{1/p}, &q=+\infty.
\end{cases}
\end{equation}
Note that 
$\|u\|_{L^{p,q}(-L,L)}$ cannot be expressed as $\intL \widetilde{G}({u})$ when $p\neq q$, but it is invariant under the symmetric decreasing rearrangement since it is defined in terms of level sets of $|u|$.}
\end{remark}

We will now prove \Cref{thm: frac Laplacian Intro}. It follows from \Cref{coro constrained minim lagrangian2} and the results of  \Cref{ss energy semilinear frac lap,ss_Calpha_reg}. Since it deals with the fractional Laplacian, let us make the following observation.

\begin{remark}
\label{remark:corrv15 remark 6.2}
{\em  
For both the kernel $K$ of the fractional Laplacian and for $K$ as in \eqref{eq:intro Del kernel}, the function $K(\tau^{1/2})$ is completely monotonic. This has been easily checked right after \Cref{def compl mon rearrangement}. Alternatively, let us check that, for both kernels, the equivalent condition \eqref{kernel Laplace transform}, with $\mu\geq 0$, regarding the Laplace transform, also holds. For this purpose, we apply the Laplace transform to the function $r\mapsto r^{\gamma-1}$, which amounts to the equality
\begin{equation}\label{Kernel frac Lapl compl mon}
   w^{-\gamma}
  =\frac{1}{\Gamma(\gamma)}\int_0^{+\infty}dr\,
  r^{\gamma-1}e^{-wr},
\end{equation}
where $\Gamma(\gamma):=\int_0^{+\infty}dr\,r^{\gamma-1}e^{-r}$ is the Gamma function. The identity \eqref{Kernel frac Lapl compl mon} follows simply by a change of variables $r\mapsto wr$ in the definition of the function $\Gamma$.
To prove the completely monotonic property for both kernels, we use the equivalent condition \eqref{kernel Laplace transform}, which reads as
\begin{equation}
K(t)=\int_0^{+\infty}d\mu(r)\,e^{-t^2r}\quad \text{for all $t>0$,}
\end{equation} 
for some nonnegative measure $\mu$. We conclude the claim from \eqref{Kernel frac Lapl compl mon} applied to, respectively, $w=t^2$ and $\gamma=(1+2s)/2$, and $w=t^2+a^2$ and $\gamma=(n+s)/2$. In particular we obtain that \eqref{kernel Laplace transform} is satisfied for the measures, respectively,
$$
   d\mu(r)=\frac{1}{\Gamma(\frac{1+2s}{2})} r^{s-1/2} dr,\qquad \text{and} \qquad
   d\mu(r)=\frac{1}{\Gamma(\frac{n+s}{2})} r^{(n+s)/2-1} e^{-a^2 r} dr.
$$
}
\end{remark}

\begin{proof}[Proof of \Cref{thm: frac Laplacian Intro}]

The statements $(a)$ and $(c)$ are a special case of \Cref{coro constrained minim lagrangian2}, applied 
with $K(t)=c_s t^{-1-2s}$. This kernel satisfies both $(i)'$ and 
$(ii)'$ in the assumptions of \Cref{coro constrained minim lagrangian2}, as pointed out in \Cref{remark:corrv15 remark 6.2}.

The proof of $(b)$ follows the same line as the one in \Cref{coro constrained minim lagrangian2}. The difference is that $g$ and $\widetilde{g}$ belong now to $C^{\delta}(\R)$ only (instead of $C^{1+\delta}(\R)$), and hence  
$(-\Delta)^su =h(u)$ in $\R$ in the periodic distributional sense for some $h\in C^{\delta}(\R)$.  We can now apply the regularity \Cref{fthm:Holder reg periodic}~$(i)$ or~$(ii)$, depending on the value of $\delta$ (called $\beta$ there). Observe that in both cases $(i)$ and $(ii)$ we deduce that $u\in C^{\alpha}(\R)$ for some $\alpha>2s$. Then, using \eqref{fl eq0_full integration}, we obtain that $(-\Delta)^su =h(u)$ in~$\R$ in the classical sense.  
\end{proof}


\section{$L^\infty(\R)$ estimates for periodic weak solutions}\label{s:Linfty_estimates}

In this section we give an $L^{\infty}$ estimate for periodic weak solutions to subcritical semilinear equations.
More precisely, under standard subcritical-type assumptions on the nonlinearity $f$, we show that every periodic weak solution to the semilinear equation $\LL_K u=f(x,u)$ in $\R$ is bounded. Here, the kernel $K$ is only assumed to be even and satisfy the lower bound \eqref{defi laplacian_K growth2}. 
To state the result, for $s<1/2$, let us denote by
\begin{equation}\label{frac sob exponent}
2_s^*:=\frac{2}{1-2s}=\frac{1+2s}{1-2s}+1
\end{equation}
the fractional Sobolev exponent.

\begin{proposition}\label{L infty estimate}
Let $K$ satisfy the lower bound \eqref{defi laplacian_K growth2} for some $0<s<1$ and some $\lambda>0$. Let $u:\R\to\R$ be a  $2L$-periodic function with
$u\in L^2(-L,L)$ and $[u]_K<+\infty$. 
\begin{itemize}
\item[$(i)$] Assume that $1/2<s<1$. Then, 
$\|u\|_{L^\infty(\R)}\leq C(\|u\|_{L^2(-L,L)}+[u]_K)<+\infty$ for some constant $C$ depending only on $L$, $s$, and 
$\lambda$.
\item[$(ii)$] Assume that $0<s\leq1/2$ and that $u$ is a  $2L$-periodic weak solution to
\begin{equation}\label{eq: L infty estimate eq1}
\LL_K u=f(x,u)\quad\text{in $\R$},
\end{equation}
where $f:\R^2\to\R$ is $2L$-periodic in the first variable and  satisfies
\begin{equation}\label{eq: growth estimate regularity}
|f(x,t)|\leq C_0(1+|t|^p)\quad\text{for all $(x,t)\in(-L,L)\times\R$,}
\end{equation}
for some constant $C_0$, some exponent $1\leq p<2_s^*-1$ if $0<s<1/2$, and some $1\leq p<+\infty$ if $s=1/2$. 
Then, $\|u\|_{L^\infty(\R)}\leq C$ for some constant $C$ depending only on $L$, $s$, $\lambda$, $C_0$, $p$, $\|u\|_{L^2(-L,L)}$, and $[u]_K$.
\end{itemize}
\end{proposition}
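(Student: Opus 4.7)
My plan would be to reduce both parts to estimates in the standard fractional Sobolev space $H^s(-L,L)$. Using the lower bound \eqref{defi laplacian_K growth2} and restricting the $y$-integral in $[u]_K^2$ to $(-L,L)$,
$$
[u]_K^2 \geq \frac{\lambda}{2}\int_{-L}^L dx\int_\mathbb{R} dy\, \frac{|u(x)-u(y)|^2}{|x-y|^{1+2s}} \geq \frac{\lambda}{2}\,[u]_{W^{s,2}(-L,L)}^2,
$$
so $u \in H^s(-L,L)$ with a quantitative bound in terms of $\|u\|_{L^2(-L,L)}$, $[u]_K$, and $\lambda$. Part $(i)$ is then immediate: for $s>1/2$ (i.e., $2s$ larger than the ambient dimension $1$), the standard Sobolev embedding $H^s(-L,L)\hookrightarrow L^\infty(-L,L)$ gives the stated bound on $(-L,L)$, and periodicity extends it to all of $\mathbb{R}$.

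For part $(ii)$, the plan is to run a Moser-type iteration on the weak formulation \eqref{truly weak form eq}. For $\beta\geq 1$ and $M>0$, let
$$
\phi_{\beta,M}(t) := t\,\min(|t|,M)^{2(\beta-1)}, \qquad \Phi_{\beta,M}(t) := t\,\min(|t|,M)^{\beta-1}.
$$
Since $\phi_{\beta,M}$ is globally Lipschitz, $\phi_{\beta,M}(u)$ is $2L$-periodic and lies in $L^2(-L,L)$ with finite $[\cdot]_K$, hence is admissible as a test function. Testing $\mathcal{L}_K u = f(x,u)$ against it and using the pointwise algebraic inequality
$$
(a-b)\bigl(\phi_{\beta,M}(a)-\phi_{\beta,M}(b)\bigr) \geq \frac{1}{\beta^2}\bigl(\Phi_{\beta,M}(a)-\Phi_{\beta,M}(b)\bigr)^2
$$
together with the lower bound on $K$ yields
$$
[\Phi_{\beta,M}(u)]_{W^{s,2}(-L,L)}^2 \leq C\beta^2 \int_{-L}^L f(x,u)\,\phi_{\beta,M}(u)\,dx.
$$

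The fractional Sobolev embedding on the interval gives $\|\Phi_{\beta,M}(u)\|_{L^{2_s^*}(-L,L)}$ controlled by $[\Phi_{\beta,M}(u)]_{W^{s,2}(-L,L)} + \|\Phi_{\beta,M}(u)\|_{L^2(-L,L)}$ (and for $s=1/2$ an analogous embedding into every $L^q$, $q<\infty$). Using the growth \eqref{eq: growth estimate regularity}, one then gets a reverse-Hölder-type inequality of the form
$$
\|u_M\|_{L^{\beta\,2_s^*}(-L,L)}^{2\beta} \leq C\beta^2 \bigl(1 + \|u\|_{L^{\beta(p+1)+(\text{lower order})}(-L,L)}\bigr),
$$
where $u_M:=\min(|u|,M)$. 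Since $p+1 < 2_s^*$, each iteration step strictly improves the Lebesgue exponent; choosing $\beta_n\to\infty$ geometrically, summing the logarithms, and finally letting $M\uparrow\infty$ produces the $L^\infty$ bound with dependence on the stated quantities. The main obstacle is the careful bookkeeping: first, verifying the algebraic inequality with the correct constant $1/\beta^2$; second, controlling the constants in the iteration (the $\beta^2$ factors and the Sobolev constant) so that the product converges; and third, dealing with the truncation $M$, which is forced on us precisely because $u$ is not a priori bounded and the natural ``untruncated'' test function $|u|^{2\beta-2}u$ is not admissible until the end of the argument.
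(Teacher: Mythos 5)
Your plan is essentially the paper's proof: part $(i)$ via the lower bound on $K$ controlling the $W^{s,2}(-L,L)$ semi-norm plus the Sobolev embedding for $s>1/2$, and part $(ii)$ via a Brezis--Kato/Moser iteration with the truncated test functions $t\min\{|t|,M\}^{r}$, where your flagged algebraic inequality is precisely the paper's \Cref{appendix l1} (with constant $2(r+2)$ in place of your $\beta^2$, either of which suffices for convergence of the iteration). The only detail worth noting is that for $s=1/2$ the paper runs the same iteration after replacing $s$ by some $s'<1/2$ with $p<2^*_{s'}-1$, which matches your remark about the embedding into every $L^q$.
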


The proof of \Cref{L infty estimate} $(ii)$ will follow the classical argument of Brezis and Kato \cite{BK}; see also \cite[B.3 Lemma]{Struwe}. This argument has been already adapted to the fractional framework; see, possibly among others, \cite[Proposition 5.1.1]{ValBook} for a nonperiodic semilinear problem and \cite[Lemma 2.3]{BGQ} for a periodic linear problem, both treating only the operator $(-\Delta)^s$. 
We carry this out here for the operator $\LL_K$ putting, in addition, special care on tracking what the constant~$C$ in \Cref{L infty estimate} depends on.

To carry out the proof, we will use the  auxiliary \Cref{appendix l1}, which plays the role of the chain rule in the local case. More precisely, in the case of the Laplacian, one starts by testing the equation against
$u^{2r+1}$. Then, from the chain rule, one uses
\begin{equation}
\frac{2r+1}{(r+1)^2}|\nabla(u^{r+1})|^2
=(2r+1)u^{2r}|\nabla u|^2
=\nabla u\cdot\nabla(u^{2r+1}).
\end{equation}
It is precisely to adapt this step to the nonlocal operator $\LL_K$ where we make use of the following simple lemma. In its statement, the left hand side of the inequality plays the role of $|\nabla(u^{r+1})|^2$ in the local framework, while the right hand side plays the role of $\nabla u\cdot\nabla(u^{2r+1})$ ---this later is the one which links \eqref{eq: L infty estimate eq1} to its weak formulation.

\begin{lemma}\label{appendix l1}
For every $a$ and $b$ in $\R$, and every $M\geq 0$ and $r\geq0$, it holds
\begin{equation}
\begin{split}
\big(a\min\{|a|,M\}^r-b\min\{|b|,M\}^r\big)^2
\leq 2(r+2)(a-b)\big(a\min\{|a|,M\}^{2r}-b\min\{|b|,M\}^{2r}\big).
\end{split}
\end{equation}
\end{lemma}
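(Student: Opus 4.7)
The plan is to reduce the lemma to a pointwise first-order inequality that can be checked by hand. I would introduce the auxiliary functions $\phi(t):=t\min\{|t|,M\}^{r}$ and $\psi(t):=t\min\{|t|,M\}^{2r}$, so that the claim becomes
$$(\phi(a)-\phi(b))^{2}\le 2(r+2)(a-b)(\psi(a)-\psi(b)).$$
A direct inspection shows that $\phi$ and $\psi$ are continuous, locally Lipschitz, and piecewise smooth on $\R$, with
$$\phi'(t)=(r+1)|t|^{r},\qquad \psi'(t)=(2r+1)|t|^{2r}\qquad\text{for }|t|<M,$$
and
$$\phi'(t)=M^{r},\qquad \psi'(t)=M^{2r}\qquad\text{for }|t|>M.$$
In particular, $\phi'$ and $\psi'$ are nonnegative almost everywhere, so both $\phi$ and $\psi$ are nondecreasing, and the fundamental theorem of calculus applies to them on any interval.

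Since the inequality is symmetric in $(a,b)$, I would assume without loss of generality that $a\ge b$. Applying the Cauchy--Schwarz inequality to $\phi(a)-\phi(b)=\int_{b}^{a}\phi'(t)\,dt$ gives
$$(\phi(a)-\phi(b))^{2}\le (a-b)\int_{b}^{a}\left(\phi'(t)\right)^{2}dt.$$
Thus it suffices to establish the pointwise estimate
$$\left(\phi'(t)\right)^{2}\le 2(r+2)\,\psi'(t)\qquad\text{for a.e. }t\in\R,$$
because integrating it over $[b,a]$ produces $2(r+2)(\psi(a)-\psi(b))$ on the right-hand side, which is exactly the form we want.

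The pointwise estimate splits into two elementary checks. On $\{|t|<M\}$ it reads $(r+1)^{2}|t|^{2r}\le 2(r+2)(2r+1)|t|^{2r}$, which reduces to $3r^{2}+8r+3\ge 0$, evidently true for $r\ge 0$. On $\{|t|>M\}$ it reads $M^{2r}\le 2(r+2)M^{2r}$, which also holds since $r\ge 0$. I do not foresee a real obstacle; the only delicate point worth mentioning is that, although $\phi'$ and $\psi'$ have jump discontinuities at $|t|=M$, this set has measure zero and $\phi,\psi$ are Lipschitz, so the integral representations and the Cauchy--Schwarz step are fully justified.
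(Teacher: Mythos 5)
Your proof is correct. The functions $\phi(t)=t\min\{|t|,M\}^{r}$ and $\psi(t)=t\min\{|t|,M\}^{2r}$ are indeed locally Lipschitz (their one-sided derivatives are bounded on compact sets, with only jump discontinuities of the derivative on the null set $\{|t|=M\}$), so the fundamental theorem of calculus and the Cauchy--Schwarz step are legitimate, and the pointwise inequality $(\phi'(t))^{2}\le 2(r+2)\,\psi'(t)$ reduces to $(r+1)^{2}\le 2(r+2)(2r+1)$ on $\{|t|<M\}$ and to $1\le 2(r+2)$ on $\{|t|>M\}$, both of which you verify correctly. The edge cases $M=0$, $r=0$, and $a=b$ all pass, and the symmetry reduction to $a\ge b$ is valid since both sides are invariant under swapping $a$ and $b$.

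Your route is genuinely different from, and more streamlined than, the paper's. The paper argues by an explicit case analysis: first $ab\le 0$ (handled algebraically via $2m_a^{r}m_b^{r}\le m_a^{2r}+m_b^{2r}$), then, after reducing to $0<b<a$, three subcases according to the positions of $a,b$ relative to $M$; the subcase $0<b<a\le M$ is exactly your Cauchy--Schwarz computation on $\int_b^a (r+1)t^{r}\,dt$, while the subcase $0<b\le M<a$ requires an additional splitting into two squares and produces the worst constant $2(r+2)$. By writing both differences as integrals of piecewise-defined derivatives over the whole interval $[b,a]$ and comparing $(\phi')^{2}$ with $\psi'$ pointwise, you absorb all of the paper's cases (including $ab\le 0$) into a single argument. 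What the unified approach buys is brevity, a transparent origin for the constant (it is just a bound for $\max\{(r+1)^{2}/(2r+1),\,1\}$, which in fact shows the sharper constant $r+2$ would suffice); what the paper's approach buys is that it is entirely algebraic and never needs to invoke absolute continuity of a piecewise-smooth function.
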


\begin{proof}
Assume first that $ab\leq0$. Then,
\begin{equation}
\begin{split}
&\big(a\min\{|a|,M\}^r-b\min\{|b|,M\}^r\big)^2\\
&\qquad=a^2\min\{|a|,M\}^{2r}+b^2\min\{|b|,M\}^{2r}
-2ab\min\{|a|,M\}^r\min\{|b|,M\}^r\\
&\qquad\leq
a^2\min\{|a|,M\}^{2r}+b^2\min\{|b|,M\}^{2r}
-ab\big(\min\{|a|,M\}^{2r}+\min\{|b|,M\}^{2r}\big)\\
&\qquad=(a-b)\big(a\min\{|a|,M\}^{2r}-b\min\{|b|,M\}^{2r}\big),
\end{split}
\end{equation}
and the lemma follows in this case. 

Therefore, from now on we can assume that $ab>0$. Note also that the case where both $a$ and $b$ are negative numbers follows easily once we know the inequality for all positive $a$ and $b$. In addition, if $a=b$ the statement is clear, and hence we can assume $a\neq b$ and, by symmetry, we can further assume that $$0<b<a.$$ 

At this point, we split the problem in three possible situations, as follows.

\medskip
\noindent\underline{{\em Case $M<b<a$}\,:} 
Then,
\begin{equation}
\begin{split}
\big(a\min\{|a|,M\}^r-b\min\{|b|,M\}^r\big)^2
&=(M^r a-M^r b)^2=(a-b)(M^{2r}a-M^{2r}b)\\
&=(a-b)\big(a\min\{|a|,M\}^{2r}-b\min\{|b|,M\}^{2r}\big).
\end{split}
\end{equation}

\medskip
\noindent\underline{{\em Case $0<b<a\leq M$}\,:}
By the Cauchy-Schwarz inequality we see that
\begin{equation}\label{appendix eq1}
\begin{split}
\big(a\min\{|a|,M\}^r&-b\min\{|b|,M\}^r\big)^2
=(a^{r+1}-b^{r+1})^2
=\Big(\int_b^adt\,(r+1)t^r\Big)^2\\
&\leq(r+1)^2(a-b)\int_b^adt\,t^{2r}
=\frac{(r+1)^2}{2r+1}(a-b)(a^{2r+1}-b^{2r+1})\\
&\leq (r+1)(a-b)\big(a\min\{|a|,M\}^{2r}-b\min\{|b|,M\}^{2r}\big).
\end{split}
\end{equation}

\medskip
\noindent\underline{{\em Case $0<b\leq M<a$}\,:}
Then,
\begin{equation}\label{appendix eq2}
\begin{split}
\big(a\min\{|a|,M\}^r&-b\min\{|b|,M\}^r\big)^2
=(M^r a-b^{r+1})^2\\
&\leq 2\Big(M^r a-(M^{2r}a)^{\frac{r+1}{2r+1}}\Big)^2
+2\Big((M^{2r}a)^{\frac{r+1}{2r+1}}-b^{r+1}\Big)^2.
\end{split}
\end{equation}
Now, on the one hand, note that $M^r a\geq(M^{2r}a)^{\frac{r+1}{2r+1}}\geq M^{r+1}$. Thus
\begin{equation}\label{appendix eq3}
\begin{split}
\Big(M^r a-(M^{2r}a)^{\frac{r+1}{2r+1}}\Big)^2
&\leq(M^r a-M^{r+1})^2=(a-M)(M^{2r}a-M^{2r+1})\\
&\leq(a-b)\big(a\min\{|a|,M\}^{2r}-b\min\{|b|,M\}^{2r}\big).
\end{split}
\end{equation}
On the other hand, by the Cauchy-Schwarz inequality and using that $M<a$, we get
\begin{equation}
\begin{split}
\Big((&M^{2r}a)^{\frac{r+1}{2r+1}}-b^{r+1}\Big)^2
=\Big(\int_b^{(M^{2r}a)^{\frac{1}{2r+1}}}\!\!\!dt\,(r+1)t^r\Big)^2\\
&\leq(r+1)^2\big((M^{2r}a)^{\frac{1}{2r+1}}-b\big)\int_b^{(M^{2r}a)^{\frac{1}{2r+1}}}\!\!\!dt\,t^{2r}
=\frac{(r+1)^2}{2r+1}\big((M^{2r}a)^{\frac{1}{2r+1}}-b\big)(M^{2r}a-b^{2r+1})\\
&\leq (r+1)(a-b)\big(a\min\{|a|,M\}^{2r}-b\min\{|b|,M\}^{2r}\big).
\end{split}
\end{equation}
Combining this with \eqref{appendix eq2} and \eqref{appendix eq3} we see that the lemma also holds for $0<b\leq M<a$.
\end{proof}

We now prove the $L^{\infty}$ estimate for periodic weak solutions.

\begin{proof}[Proof of \Cref{L infty estimate}]
In the following, we denote
\begin{equation}
\|u\|_{W^{s,2}(-L,L)}:=\Big(\intL |u|^2+\intL\!dx\intL\!dy\,
\frac{|u(x)-u(y)|^2}{|x-y|^{1+2s}}\Big)^{1/2}.
\end{equation}
Clearly, the lower bound \eqref{defi laplacian_K growth2} gives
\begin{equation}\label{easy control sobolev Hs}
\|u\|_{W^{s,2}(-L,L)}^2\leq \|u\|_{L^2(-L,L)}^2+\frac{2}{\lambda}[u]_K^2<+\infty.
\end{equation}

The proof of $(i)$ is straightforward. Since $1/2<s<1$, \cite[Theorem 8.2]{Valdinoci1} and \eqref{easy control sobolev Hs} yield
\begin{equation}
\|u\|_{L^\infty(-L,L)}\leq C\|u\|_{W^{s,2}(-L,L)}
\leq C\big(\|u\|_{L^2(-L,L)}+\sqrt{2\lambda^{-1}}[u]_K\big)
\end{equation}
for some constant $C$ depending only on $L$ and $s$.

We now address the proof of $(ii)$ for $0<s<1/2$. Given nonnegative constants $M$ and $r$, and $t\in\R$, set
\begin{equation}
  \varphi_{r,M}(t):=t\min\{|t|,M\}^r.
\end{equation}
It is not hard to show that $\varphi_{r,M}(u)$ is a 
$2L$-periodic function with $\varphi_{r,M}(u)\in L^2(-L,L)$ and $[\varphi_{r,M}(u)]_K<+\infty$ for all $r\geq0$ and $M\geq0$. 
Using \Cref{appendix l1},
taking $\varphi_{2r,M}(u)$ as a test function in the weak formulation of \eqref{eq: L infty estimate eq1} given by \eqref{truly weak form eq} with $h=f(x,u)$, and using the subcritical growth \eqref{eq: growth estimate regularity}, we see that
\begin{equation}\label{appendix eq5}
\begin{split}
[\varphi_{r,M}(u)]_K^2
&\leq 2(r+2)\langle u, \varphi_{2r,M}(u)\rangle_K
=2(r+2)\intL f(x,u)\, \varphi_{2r,M}(u)\\
&\leq C_1(r+1)\intL (1+|u|^p) |\varphi_{2r,M}(u)|,
\end{split}
\end{equation}
where $C_1=4C_0$.
Adding $\|\varphi_{r,M}(u)\|_{L^2(-L,L)}^2$ on both sides of \eqref{appendix eq5}, using \eqref{easy control sobolev Hs},  applying the fractional Sobolev inequality \cite[Theorem 6.7]{Valdinoci1} to the left-hand side, and using that 
\begin{equation}\label{eq: appendix eq6_ident_aux}
|u|^p|\varphi_{2r,M}(u)|=|u|^{p-1}\varphi_{r,M}(u)^2
\end{equation} 
on the right-hand side, we deduce 
\begin{equation}\label{eq: appendix eq6}
\Big(\intL |\varphi_{r,M}(u)|^{2_s^*}\Big)^{2/2_s^*}
\leq C_2(r+1)\Big(\intL|\varphi_{2r,M}(u)|+\intL 
|u|^{p-1}\varphi_{r,M}(u)^2+\intL \varphi_{r,M}(u)^2\Big)
\end{equation}
for some constant $C_2$ depending only on $L$, $\lambda$, $s$, and $C_0$.

Since we are assuming that
$p<2_s^*-1$, for every $\epsilon>0$ (to be chosen later on) there exists a constant $C_\epsilon>0$ (depending only on
$\epsilon$, $s$, and $p$) such that
\begin{equation}
t^{p-1}+1\leq\epsilon t^{2_s^*-2}+\frac{C_\epsilon}{t}\quad\text{for all }t>0.
\end{equation}
In particular, from \eqref{eq: appendix eq6}, the definition of $\varphi_{r,M}$, and \eqref{eq: appendix eq6_ident_aux}, we deduce that
\begin{equation}\label{eq: appendix eq6'}
 \begin{split}
   \Big(\intL |\varphi_{r,M}(u)|^{2_s^*}\Big)^{2/2_s^*}
   &\leq 
    C_2(r+1)\Big((1+C_\epsilon)\intL|\varphi_{2r,M}(u)|
   +\epsilon \intL |u|^{2_s^*-2}\varphi_{r,M}(u)^2\Big).
 \end{split}
\end{equation}
Now, by H\"older's inequality,
\begin{equation}\label{eq: appendix eq7}
   \intL |u|^{2_s^*-2}\varphi_{r,M}(u)^2
   \leq
   \Big(\intL |u|^{2_s^*}\Big)^{\frac{2_s^*-2}{2_s^*}}
   \Big(\intL |\varphi_{r,M}(u)|^{2_s^*}\Big)^{2/2_s^*},
\end{equation}
and thus \eqref{eq: appendix eq6'} finally yields
\begin{equation}\label{eq: appendix eq7'}
\begin{split}
\Big(\intL |\varphi_{r,M}(u)|^{2_s^*}\Big)^{2/2_s^*}
&\leq C_2(r+1)(1+C_\epsilon)\intL|\varphi_{2r,M}(u)|\\
&\quad+C_2(r+1)\epsilon \|u\|_{L^{2_s^*}(-L,L)}^{2_s^*-2}
\Big(\intL |\varphi_{r,M}(u)|^{2_s^*}\Big)^{2/2_s^*}.
\end{split}
\end{equation}
Recall that $\|u\|_{L^2(-L,L)}$ and $[u]_K$ are finite. Therefore, the fractional Sobolev inequality and \eqref{easy control sobolev Hs} show that
$\|u\|_{L^{2_s^*}(-L,L)}\leq C\|u\|_{W^{s,2}(-L,L)}<+\infty$. 

Let us now choose $r_1>0$ such that $2r_1+1=2_s^*$, and also choose $\epsilon>0$ small enough depending only on $L$, $\lambda$, $C_0$, $s$, and $\|u\|_{W^{s,2}(-L,L)}$, such that
\begin{equation}\label{eq: appendix eq7''}
C_2(r_1+1)\epsilon \|u\|_{L^{2_s^*}(-L,L)}^{2_s^*-2}\leq\frac{1}{2}.
\end{equation}
Then,
from \eqref{eq: appendix eq7'} we get that
\begin{equation}\label{eq: appendix eq7''bisi}
\begin{split}
\Big(\intL |\varphi_{r_1,M}(u)|^{2_s^*}\Big)^{2/2_s^*}
&\leq C_{3}\intL|\varphi_{2r_1,M}(u)|
\end{split}
\end{equation}
for some constant $C_3$ depending only on $L$, $\lambda$, $C_0$, $s$, $p$, and $\|u\|_{W^{s,2}(-L,L)}$. Now, letting $M\uparrow+\infty$ and using monotone convergence in \eqref{eq: appendix eq7''bisi}, in view of the definition of $r_1$ we conclude that
\begin{equation}\label{eq: appendix eq11}
\begin{split}
\||u|^{r_1+1}\|_{L^{2_s^*}(-L,L)}^{2}
=\Big(\intL |u|^{2_s^*(r_1+1)}\Big)^{2/2_s^*}
&\leq C_{3}\intL |u|^{2r_1+1}=C_3\|u\|_{L^{2_s^*}(-L,L)}^{2_s^*}\leq C_4
\end{split}
\end{equation}
for some constant $C_4$ depending only on $L$, $\lambda$, $C_0$, $s$, $p$, and $\|u\|_{W^{s,2}(-L,L)}$.

We now come back to \eqref{eq: appendix eq6'}.  We stress that  $\epsilon$ has already been chosen according to \eqref{eq: appendix eq7''}. By letting $M\uparrow+\infty$ in \eqref{eq: appendix eq6'} and using monotone convergence, we deduce that
\begin{equation}\label{eq: appendix eq8}
\Big(\intL |u|^{2_s^*(r+1)}\Big)^{2/2_s^*}
\leq C(r+1)\Big(\intL |u|^{2r+1}
+\intL |u|^{2r+2_s^*}\Big)
\end{equation}
for all $r\geq0$, where the constant $C$ depends only on $L$, $\lambda$, $C_0$, $s$, $p$, and $\|u\|_{W^{s,2}(-L,L)}$. Since $2_s^*>1$, we clearly have
\begin{equation}\label{eq: appendix eq9}
t^{2r+1}\leq 1+t^{2r+2_s^*}\quad\text{for all }t\geq0.
\end{equation}
From \eqref{eq: appendix eq8} and \eqref{eq: appendix eq9}
we get that
\begin{equation}\label{eq: appendix eq10}
\Big(1+\intL |u|^{2_s^*(r+1)}\Big)^{\frac{1}{2_s^*r}}
\leq \big(C(r+1)\big)^{\frac{1}{2r}}\Big(1+\intL |u|^{2r+2_s^*}\Big)^{\frac{1}{2r}},
\end{equation}
where $C$ depends only on $L$, $\lambda$, $C_0$, $s$, $p$, and $\|u\|_{W^{s,2}(-L,L)}$. 

We now define
\begin{equation}
r_{m+1}:=\frac{2_s^*}{2}\,r_m=\Big(\frac{2_s^*}{2}\Big)^m r_1
\quad\text{for all }m\geq1
\end{equation}
($r_1$ has already been defined by the relation $2r_1+1=2_s^*$).
Clearly, $2r_{m+1}+2_s^*=2_s^*(r_m+1)$, and $r_m\uparrow+\infty$ as $m\uparrow+\infty$. We also set
\begin{equation}
A_m:=\Big(1+\intL |u|^{2_s^*(r_m+1)}\Big)^{\frac{1}{2_s^*r_m}}
\quad\text{for all }m\geq1.
\end{equation}
Combining \eqref{eq: appendix eq10} for $r=r_{m+1}$ with
\eqref{eq: appendix eq11} we deduce that
\begin{equation}
\begin{split}
A_{m+1}&\leq \Big(C\Big(\frac{2_s^*}{2}\Big)^m r_1+C\Big)
^{(\frac{2}{2_s^*})^m\frac{1}{2r_1}}A_m
\leq\cdots\leq A_1\prod_{j=1}^m
\Big(C\Big(\frac{2_s^*}{2}\Big)^j r_1+C\Big)
^{(\frac{2}{2_s^*})^j\frac{1}{2r_1}}\\
&=A_1\exp\Big\{\frac{1}{2r_1}\sum_{j=1}^m
{\Big(\frac{2}{2_s^*}\Big)^j}
\log\Big(C\Big(\frac{2_s^*}{2}\Big)^j r_1+C\Big)\Big\}
\leq C_5
\end{split}
\end{equation}
for some constant $C_5\geq1$  depending only on $L$, $\lambda$, $C_0$, $s$, $p$, and $\|u\|_{W^{s,2}(-L,L)}$. Here we also used that $2_s^*>2$ to see that the sum above is uniformly bounded in $m\geq1$. Therefore,
\begin{equation}
\|u\|_{L^{2_s^*(r_m+1)}(-L,L)}\leq C_5^{\frac{r_m}{r_m+1}}\leq C_5
\end{equation}
for all $m\geq1$. Since this bound is uniform in $m$, letting $m\uparrow+\infty$ (and, thus, $r_m\uparrow+\infty$) we get $\|u\|_{L^{\infty}(-L,L)}\leq C_5$. By periodicity, $\|u\|_{L^{\infty}(\R)}\leq C_5<+\infty$, as desired.

Let us finally address the proof of $(ii)$ for $s=1/2$. In this case we are assuming \eqref{eq: growth estimate regularity} for some $1\leq p<+\infty$. Since $p$ is finite, we can take $0<s'<1/2$ such that $p<\frac{1+2s'}{1-2s'}$, which yields $p<2^*_{s'}-1$ by \eqref{frac sob exponent}. The idea now is to carry out the proof made for the case $0<s<1/2$ but replacing $s$ by $s'$ everywhere. To do so, we only need to bound 
$\|u\|_{W^{s',2}(-L,L)}$ by $\|u\|_{L^2(-L,L)}$ and $[u]_K$. Note that \eqref{defi laplacian_K growth2} for $s=1/2$ gives $K(t)\geq \lambda (2L)^{2s'-1}t^{-1-2s'}$ for all $0<t\leq2L$. Hence,
\begin{equation}\label{easy control sobolev Hs 2}
\|u\|_{W^{s',2}(-L,L)}^2\leq \|u\|_{L^2(-L,L)}^2+\frac{1}{\lambda}(2L)^{1-2s'}[u]_K^2,
\end{equation}
as desired.
Now, going back to the beginning of the proof, from the weak formulation of \eqref{eq: L infty estimate eq1} we obtain \eqref{appendix eq5}. This is the only point where we used that $u$ is a periodic weak solution. This means that, from \eqref{frac sob exponent} up to the end of the proof, we simply need to replace $s$ by $s'$, and \eqref{easy control sobolev Hs} by \eqref{easy control sobolev Hs 2}, everywhere to conclude 
$\|u\|_{L^{\infty}(\R)}<+\infty$.
\end{proof}

\begin{remark} {\em Let $0<s<1/2.$
Consider a sequence of semilinear equations of the form
\begin{equation}\label{bdd.critical.not_unif}
\LL_{K_j}u_j=f_j(x,u_j)\quad\text{in $\R$,}
\end{equation}
where $f_j$ is $2L$-periodic in the first variable.
From \Cref{L infty estimate} we see that, in the subcritical case $1\leq p<2_s^*-1$,
if 
\begin{itemize}
\item $|f_j(x,t)|\leq C_0(1+|t|^p)$ for all $j$ and all $(x,t)\in(-L,L)\times[0,+\infty)$, 
\item $K_j$ satisfies \eqref{defi laplacian_K growth2} uniformly in $j$, and 
\item $u_j$ is a periodic weak solution to 
$\LL_{K_j}u_j=f_j(x,u_j)$ in $\R$ such that 
$\|u_j\|_{L^2(-L,L)}$ and $[u_j]_{K_j}$ are uniformly bounded in $j$ (hence, $\|u_j\|_{W^{s,2}(-L,L)}$ too),
\end{itemize} 
then $\|u_j\|_{L^\infty(\R)}$ are uniformly bounded in $j$. 
We took a special care to track the dependence of the constants appearing in the proof of 
\Cref{L infty estimate}~$(ii)$ precisely to get this type of uniform estimates. It can be useful, for instance, when one uses a monotone iteration argument to construct solutions to semilinear integro-differential equations.}
\end{remark}


\section{$C^\alpha(\R)$ estimates for bounded periodic distributional solutions}
\label{ss_Calpha_reg}

The aim of this section is to prove  \Cref{fthm:Holder reg periodic}. 

Before providing the proof, let us first comment on the optimality of the H\"older exponents appearing in \Cref{fthm:Holder reg periodic}.
The~$\epsilon$ appearing in the statement of \Cref{fthm:Holder reg periodic} $(ii)$ is due to the fact that $\beta+2s$ may be an integer. In particular, if $\beta+2s$ is not an integer, we indeed get $u\in C^{\beta+2s}(\R)$. On the contrary, the~$\epsilon$ appearing in the statement of \Cref{fthm:Holder reg periodic} $(i)$ has a different nature. It comes from the fact that the iteration method of the proof yields 
$u\in C^{2s\beta_{k}}(\R)$ for all $k\geq 1$, where 
$\beta_k$ is a sequence of exponents such that 
$\beta_k\uparrow1/(1-\beta)$ but $\beta_k<1/(1-\beta)$ for all $k$.

Currently we do not know whether \Cref{fthm:Holder reg periodic} $(i)$ is sharp or if it holds true with $\epsilon=0$ as well. But what we know for sure is that the exponent $2s/(1-\beta)$ cannot be improved, as shown in our forthcoming work \cite{Csato Mas Holder opt}.

We have seen that
\Cref{prop:gy:weak per and nonper sol} allows us to apply the known regularity results for distributional Dirichlet solutions to the class of periodic distributional (and in particular also to weak) solutions. 
Thus, let us recall the following regularity result of \cite{DRSV}. Be aware that what we call here distributional solution is called weak solution in \cite{DRSV}.

\begin{theorem} $($\cite[Theorem 3.8]{DRSV}$)$\label{fl Lemma bis}
Let $K$ satisfy \eqref{defi laplacian_K growth1} and \eqref{defi laplacian_K growth2}, and let $u\in L^\infty(\R)$ be a distributional (Dirichlet)
solution to $\LL_K u=h$ in $I=(-L,L)$, i.e., a solution satisfying \eqref{weak dirichlet sol defi} for all $\varphi\in C_c^{\infty}(I)$. 

Then, the following holds:
\begin{itemize}
\item[$(i)$] Assume that $h\in L^\infty(I)$. If $s\neq 1/2$, then $u\in C^{2s}(\overline I/2)$ and
\begin{equation}
\|u\|_{C^{2s}(\overline I/2)}
\leq C\big(\|u\|_{L^\infty(\R)}+\|h\|_{L^\infty(I)}\big).
\end{equation}
If $s=1/2$, then for every $\epsilon>0$, $u\in C^{2s-\epsilon}(\overline I/2)$ and
\begin{equation}
\|u\|_{C^{2s-\epsilon}(\overline I/2)}
\leq C\big(\|u\|_{L^\infty(\R)}+\|h\|_{L^\infty(I)}\big).
\end{equation}

The constants $C$ depend only on $L$, $s$, 
$\lambda$, and $\Lambda$ (and also on $\epsilon$ in the case $s=1/2$).
\item[$(ii)$] 
Assume that $h\in C^{\alpha}(\overline I)$ and $u\in C^\alpha(\R)$ for some $\alpha>0$. If $\alpha+2s$ is not an integer, then $u\in C^{\alpha+2s}(\overline I/2)$ and
\begin{equation}
\|u\|_{C^{\alpha+2s}(\overline I/2)}
\leq C\big(\|u\|_{C^{\alpha}(\R)}+\|h\|_{C^{\alpha}(\overline I)}\big).
\end{equation}
If $\alpha+2s$ is an integer, then for every $\epsilon>0$, $u\in C^{\alpha+2s-\epsilon}(\overline I/2)$ and
\begin{equation}
\|u\|_{C^{\alpha+2s-\epsilon}(\overline I/2)}
\leq C\big(\|u\|_{C^{\alpha}(\R)}+\|h\|_{C^{\alpha}(\overline I)}\big).
\end{equation}

The constants $C$ depend only on $L$, $s$, $\alpha$, 
$\lambda$, and $\Lambda$ (and also on $\epsilon$ in the case that $\alpha+2s$ is an integer).
\end{itemize}
\end{theorem}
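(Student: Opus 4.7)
The plan is to follow the general approach for establishing interior regularity of integro-differential operators with uniformly elliptic kernels of order $2s$. The two-sided bound \eqref{defi laplacian_K growth1}--\eqref{defi laplacian_K growth2} on $K$ places $\LL_K$ in the class of operators for which a nonlocal Krylov--Safonov theory is available, and for which Schauder-type bootstrap is known to work.

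For part $(i)$, the first step would be to establish an oscillation decay estimate on nested intervals centered at a point of $\overline I/2$. The key ingredient is a weak Harnack inequality (or a nonlocal ABP-type estimate) for supersolutions of $\LL_K v \leq C$. Combined with an appropriate cutoff procedure that controls the nonlocal tails of $u$ using only $\|u\|_{L^\infty(\R)}$, iterating such an oscillation decay on dyadic nested intervals yields a bound of the form $\operatorname{osc}_{B_r} u \lesssim r^{2s}\big(\|u\|_{L^\infty(\R)} + \|h\|_{L^\infty(I)}\big)$, which is precisely $C^{2s}$ regularity up to the boundary of $I/2$. The borderline case $s=1/2$ falls in the endpoint Morrey--Sobolev regime and loses an arbitrary $\epsilon$.

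For part $(ii)$, I would bootstrap by finite differences. Using the translation invariance of $\LL_K$, define, for small $\xi \in \R$,
\begin{equation}
v_\xi(x) := \frac{u(x+\xi) - u(x)}{|\xi|^\alpha},
\end{equation}
which satisfies $\LL_K v_\xi(x) = (h(x+\xi) - h(x))/|\xi|^\alpha =: h_\xi(x)$ on translates of $I$. By hypothesis, $\|v_\xi\|_{L^\infty(\R)} \lesssim \|u\|_{C^\alpha(\R)}$ and $\|h_\xi\|_{L^\infty(I')} \lesssim \|h\|_{C^\alpha(\overline I)}$ uniformly in $\xi$, for any $I' \subset\subset I$. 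Applying part $(i)$ to $v_\xi$ on such $I'$ yields $\|v_\xi\|_{C^{2s}(I''/2)} \lesssim \|u\|_{C^\alpha(\R)} + \|h\|_{C^\alpha(\overline I)}$, uniformly in $\xi$. The standard characterization of Hölder spaces via uniform control of finite differences then gives $u \in C^{\alpha+2s}(\overline I/2)$; when $\alpha+2s$ is an integer, the characterization must be modified and produces the $\epsilon$-loss. One may also need to iterate this bootstrap in stages when $\alpha + 2s > 1$, so as to keep finite differences at fractional order.

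The main obstacle is the nonlocal tail. Unlike in the local case, $\LL_K u(x)$ depends on values of $u$ arbitrarily far from $x$, so every localization step must separately absorb the contribution from $u$ outside a large ball. Controlling this contribution by $\|u\|_{L^\infty(\R)}$ via the upper bound \eqref{defi laplacian_K growth1} is essential and is precisely where the dependence of $C$ on $\Lambda$ enters, while the dependence on $\lambda$ enters through the Harnack-type step in part $(i)$.
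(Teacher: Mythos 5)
This statement is not proved in the paper at all: it is quoted verbatim from Dipierro--Ros-Oton--Serra--Valdinoci \cite[Theorem 3.8]{DRSV}, and the only argument the paper supplies is the one-line observation, made right after the statement, that the integer case of part $(ii)$ follows from the non-integer case upon replacing $\alpha$ by $\alpha-\epsilon$. So you are attempting to reprove an external black box rather than reconstructing anything the authors actually do.

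As a from-scratch sketch, your part $(ii)$ is sound in outline (incremental quotients plus the finite-difference characterization of H\"older spaces, with staged iteration to keep the differences at fractional order, is the standard route from the $C^{2s}$ estimate to the Schauder estimate, and translation invariance of $\LL_K$ is available since $K$ depends only on $|x-y|$). Part $(i)$, however, contains a genuine gap. For kernels satisfying only the two-sided bounds \eqref{defi laplacian_K growth1}--\eqref{defi laplacian_K growth2}, i.e.\ rough, merely measurable kernels comparable to $|y|^{-1-2s}$, the weak Harnack inequality and the Krylov--Safonov oscillation-decay iteration yield $\operatorname{osc}_{B_r}u\lesssim r^{\gamma}$ only for some small exponent $\gamma=\gamma(s,\lambda,\Lambda)>0$, not for the optimal exponent $2s$: each step improves the oscillation by a fixed factor $1-\theta$, and the resulting exponent is $\log_2\frac{1}{1-\theta}$, which has no reason to reach $2s$. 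Obtaining the sharp exponent $C^{2s}$ (with the $\epsilon$-loss exactly when $2s$ hits the integer $1$) is the actual content of \cite[Theorem 3.8]{DRSV} and of \cite{RS}, and it is proved there by a different mechanism --- a blow-up/compactness argument combined with a Liouville-type classification of entire solutions (or, alternatively, barriers exploiting the exact scaling of the kernel bounds) --- not by iterating weak Harnack. Your sentence ``iterating such an oscillation decay \dots yields $\operatorname{osc}_{B_r}u\lesssim r^{2s}$'' therefore does not follow from the ingredients you invoke, and the explanation of the $s=1/2$ loss via ``the endpoint Morrey--Sobolev regime'' is also not the right reason: the loss occurs because $2s=1$ is an integer, where the Liouville/scaling step degenerates.
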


The last case of $(ii)$ in \Cref{fl Lemma bis} ($\alpha+2s$ is an integer) is not stated explicitly in \cite[Theorem 3.8]{DRSV}, but it follows  from the first case of $(ii)$ by replacing $\alpha$ by $\alpha-\epsilon$.

As we showed in \Cref{prop:gy:weak per and nonper sol}, if we assume that $h$ is $2L$-periodic, the conclusions of the \Cref{fl Lemma bis} also hold for $2L$-periodic distributional solutions to $\LL_K u=h$ in $\R$, replacing $I$ (and $I/2$) by $\R$ in $(i)$ and $(ii)$. More precisely, since $\LL_Ku=h$ in the whole real line and $u$ is $2L$-periodic, one can apply the theorem to the sets $I/2+a\subset I+a$ for different values of $a\in\R$ (to cover a neighborhood of $\overline I$) and obtain an estimate on the H\"older norm of $u$ in all of~$\R$. Thus, we can use \Cref{fl Lemma bis} to prove  \Cref{fthm:Holder reg periodic} on  periodic solutions.

\begin{proof}[Proof of \Cref{fthm:Holder reg periodic}.]
We will prove the theorem by distinguishing several cases, but we will treat all of them essentially using a same bootstrap argument based on \Cref{fl Lemma bis}. 
We will also make use of the following simple facts regarding the composition of H\"older continuous functions.

Let $f\in C^{\beta}(\R)$ and $v\in C^{\sigma}(\R)$ be a $2L$-periodic function. Then, there exists some constant $C_{\beta,\sigma}>0$ depending only on 
$\|f\|_{C^{\beta}(\R)}$ and $\|v\|_{C^{\sigma}(\R)}$ such that the following holds:
\begin{align}
&\text{If $0\leq \beta,\sigma<1$, then $f(v)\in C^{\beta\sigma}(\R)$ and 
$\|f(v)\|_{C^{\beta\sigma}(\R)}\leq C_{\beta,\sigma}$.}\label{gyula:H1}\\
&\text{If $\max\{\beta,\sigma\}\geq 1$, then $f(v)\in C^{\min\{\beta,\sigma\}}(\R)$ and $\|f(v)\|_{C^{\min\{\beta,\sigma\}}(\R)}\leq C_{\beta,\sigma}$.}\label{gyula:H2}
\end{align}
The proof of these statements follows, for instance, from 
\cite[Theorem 16.31]{CDK} and the periodicity of $v$; \cite[Theorem 16.31]{CDK} does not cover the case $\beta\geq 1$ and $\sigma<1,$ but this case follows very easily from the fact that $f$ is Lipschitz. As already mentioned right above this proof, the periodicity allows to extend the estimates of the H\"older norm of the composition $f(v)$ from bounded intervals to the whole real line.

Assume $s\neq1/2$ (the case $s=1/2$ will be explained at the end of the proof). Since $f(u)\in L^\infty(\R)$, an application of \Cref{fl Lemma bis} $(i)$ gives
\begin{equation}
\|u\|_{C^{2s}(\overline I/2)}
\leq C\big(\|u\|_{L^\infty(\R)}+\|f(u)\|_{L^\infty(I)}\big),
\end{equation}
where $I=(-L,L)$.
Since $u$ is $2L$-periodic, this argument applies to all translates of $u$ (since they are also periodic solutions), and thus we actually deduce that 
\begin{equation}\label{reg estimate SR f(u)}
\|u\|_{C^{2s}(\R)}
\leq C\big(\|u\|_{L^\infty(\R)}+\|f(u)\|_{L^\infty(\R)}\big).
\end{equation}

To simplify the exposition, we first assume in the second and fourth case below 
that $\beta$ is not an integer, and that all the H\"older exponents appearing in the forthcoming arguments never coincide with an integer when using \Cref{fl Lemma bis}. Then, at the end of the proof we will explain what should be modified in case that some H\"older exponent eventually hits an integer when using \Cref{fl Lemma bis}.

\medskip
\noindent\underline{{\em Case $2s\leq1-\beta$}\,:} 
\smallskip

Since $s$ and $\beta$ are positive, we infer from $2s\leq1-\beta$ that $0<\beta<1$ and $0<s<1/2$. Set 
\begin{equation}\label{beta_k iteration regularity}
\beta_k:=\sum_{j=0}^k \beta^j\quad \text{for $k=1,2,3,\ldots$}
\end{equation} 
Note that $\beta_k\uparrow1/(1-\beta)$ as $k\uparrow+\infty$. 
Since we are assuming $2s/(1-\beta)\leq1$, we have $2s\beta_k<1$ for all $k\geq1$. 

From \eqref{reg estimate SR f(u)} and the fact that $\beta<1$ we have $u\in C^{2s}(\R)\subset C^{2s\beta}(\R)$. Since 
$2s<1$ and $\beta<1$, \eqref{gyula:H1} yields 
$f(u)\in C^{2s\beta}(\R)$. Therefore, \Cref{fl Lemma bis}~$(ii)$ gives $u\in C^{2s(\beta+1)}(\R)=C^{2s\beta_1}(\R)$.
Since $2s\beta_1<1$, from \eqref{gyula:H1} once again we obtain 
$f(u)\in C^{2s\beta_1\beta}(\R)$. 
Also, since $\beta<1$, we have $u\in C^{2s\beta_1\beta}(\R)$. 
Applying \Cref{fl Lemma bis} $(ii)$  we get 
$u\in C^{2s\beta_2}(\R)$ with $\beta_2=\beta_1\beta+1
=\beta^2+\beta+1$. Iterating this argument, and recalling that $2s\beta_k<1$ for all $k\geq1$, we see that
$u\in C^{2s\beta_{k}}(\R)$ for all $k\geq 1$.  
Using that $\beta_k\uparrow1/(1-\beta)$, this yields 
$u\in C^{\frac{2s}{1-\beta}-\epsilon}(\R)$ for all $\epsilon>0$, as desired. 

\medskip

\noindent\underline{{\em Case $2s>1-\beta$, with $0<\beta<1$, $0<s<1/2$}\,:} 
\smallskip

Arguing as in the case $2s\leq1-\beta$, we see that 
$u\in C^{2s\beta_{k}}(\R)$ whenever $2s\beta_{k-1}<1$, with the understanding that $\beta_0:=\beta$. Note that $2s\beta_0<1$ and hence we can start the iteration. Recall that $\beta_k\uparrow1/(1-\beta)$ and, since we are now assuming that $2s>1-\beta$, we have $2s/(1-\beta)>1$. Hence, there exists a unique $k_0\geq1$ such that $2s\beta_{k_0}\geq1$ and $2s\beta_{k_0-1}<1$. Now, since $u\in C^{2s\beta_{k_0}}(\R)$ and $2s\beta_{k_0}\geq1$, we have $u\in C^{1}(\R)$. Thus, $f(u)\in C^{\beta}$ by \eqref{gyula:H2}.  Applying \Cref{fl Lemma bis} $(ii)$ we deduce that $u\in C^{\beta+2s-\epsilon}(\R)$ for all $\epsilon>0$, as desired. 
   
\medskip

\noindent\underline{{\em Case $2s>1-\beta$, with $0<\beta<1$, $1/2<s<1$}\,:} 
\smallskip

Since $\beta<1<2s$, from \eqref{reg estimate SR f(u)} we get 
$u\in C^{1}(\R)$. Thus,
$f(u)\in C^{\beta}(\R)$ by \eqref{gyula:H2}. Applying \Cref{fl Lemma bis} $(ii)$ we end up with $u\in  C^{\beta+2s-\epsilon}(\R)$ for all $\epsilon>0$.

\medskip

\noindent\underline{{\em Case $\beta>1$ (and hence $2s>1-\beta$) and $s\neq1/2$}\,:} 
\smallskip

Let $j_0\geq1$ be an integer such that $2s(j_0-1)<\beta\leq2sj_0$. From \Cref{fl Lemma bis} with $\alpha=2sj$ and the periodicity of $u$ we see that 
\begin{equation}\label{reg estimate SR f(u)2}
\|u\|_{C^{2sj}(\R)}
\leq C\big(\|u\|_{C^{2s(j-1)}(\R)}+\|f(u)\|_{C^{2s(j-1)}(\R)}\big)
\end{equation}
whenever the right-hand side is finite ---recall that we are assuming that $2sj\notin\Z$. Here, \eqref{reg estimate SR f(u)2} for $j=1$ must be interpreted as \eqref{reg estimate SR f(u)}. Observe that if 
$u\in C^{2sj}(\R)$ and $\beta\geq1$, then $f(u)\in C^{\min\{\beta,2sj\}}(\R)$ by \eqref{gyula:H2}. Therefore, \eqref{reg estimate SR f(u)2} can be iterated for $j=1,\ldots,j_0$ to get 
$u\in C^{2sj_0}(\R)\subset C^\beta(\R)$, which yields $f(u)\in C^\beta(\R)$ by \eqref{gyula:H2}. Now, \Cref{fl Lemma bis} leads to $u\in C^{\beta+2s-\epsilon}(\R)$ for all $\epsilon>0$, as desired.
\medskip

\noindent\underline{{\em Case $s=1/2$, or $\beta$ is an integer, or if we hit an integer (in the 2nd and 4th case above)\,:}}
\smallskip

Assume first that $s\neq1/2$. During the iteration processes explained in the previous cases, if we know that $u$ and $f(u)$ belong to $C^{\alpha}(\R)$ for some $\alpha<\beta$ and it turns out that $\alpha+2s$ hits an integer 
(clearly, this situation cannot happen in the first and third case), we simply diminish a bit the exponent $\alpha$ in order to avoid that integer and we continue the iteration.

Finally, the case $s=1/2$ is treated analogously but, instead of \eqref{reg estimate SR f(u)}, using that 
\begin{equation}
\|u\|_{C^{2s-\epsilon}(\R)}
\leq C\big(\|u\|_{L^\infty(\R)}+\|f(u)\|_{L^\infty(\R)}\big)
\end{equation}
for any $\epsilon>0$ arbitrarily small, which follows from  \Cref{fl Lemma bis} $(i)$ and periodicity.
\end{proof}

\begin{remark}
\label{gy:remark:nonperiodic reg Holder}{\em 
With the same iteration method used here one can show that the analogue of \Cref{fthm:Holder reg periodic} in the nonperiodic scenario also holds for regular enough kernels $K$ satisfying \eqref{defi laplacian_K growth1} and \eqref{defi laplacian_K growth2}.  Note that from the first iteration one would only get a bound on the H\"older regularity of $u$ in 
$\overline I/2$, but not in the whole real line. Hence, one cannot use \Cref{fl Lemma bis} $(ii)$ for the next iteration since its right-hand side involves $\|u\|_{C^\alpha(\R)}$. However, for sufficiently regular kernels there is an improvement of \Cref{fl Lemma bis} $(ii).$ More precisely, if $[K]_{C^{\alpha}(\R^n\setminus B_r)}\leq \Lambda r^{-n-2s-\alpha}$ for all $r>0$ then
$$
  \|u\|_{C^{\alpha+2s}(\overline I/2)}
  \leq C\big(\|u\|_{L^{\infty}(\R)}+\|h\|_{C^{\alpha}(\overline I)}\big)
  \quad\text{if $    \alpha+2s$ is not an integer};
$$
see \cite[Theorem 3.8 $(c)$]{DRSV}. 

Therefore, for sufficiently regular kernels $K$ one can obtain the following result: 
{\em given $f\in C^\beta(\R)$ for some $\beta>0$, every $u\in L^\infty(\R)$ distributional solution to $\LL_K u=f(u)$ in $I=(-L,L)$ satisfies
\begin{itemize}
\item[$(i)$] $u\in C^{\frac{2s}{1-\beta}-\epsilon}(\overline I/2)$ for all $\epsilon>0$ if $2s<1-\beta$, 
\item[$(ii)$] $u\in C^{\beta+2s-\epsilon}(\overline I/2)$ for all $\epsilon>0$ if $2s\geq 1-\beta$.
\end{itemize}}
In the periodic scenario this difficulty does not appear thanks to the periodicity of $u$, even for nonregular kernels.
}
\end{remark}


\appendix


\section{The half-Laplacian on $\Sph^1$}
\label{section:appendix half.harm}

We shall now explain the relation between the integro-differential expressions for the half-Laplacian on $\R$ for periodic functions and the half-Laplacian on $\Sph^1$. 
Any $2\pi$-periodic function~$u$ on the real line can be identified with a function $u_{\Sph^1}$ in the unit circle by $u_{\Sph^1}(e^{ix})=u(x)$ for all 
$x\in\R$. With this at hand, we will see below that 
$(-\Delta)^{1/2} u$ corresponds to the Dirichlet-to-Neumann map for the harmonic extension of $u_{\Sph^1}$ to the unit disk. 
This map 
also has an integro-differential expression on $\Sph^1$ for  $u_{\Sph^1}$. It is noticeable that the expression that one gets for 
$u_{\Sph^1}$ on $\Sph^1$ is the same as the one for $u$ on 
$\R$, namely \eqref{defi laplacian_} with $s=1/2$, except that $\R$ must be replaced by $\Sph^1$ and 
$u$ by $u_{\Sph^1}$, but the kernel remains the same. The purpose of this section is to give the details of all these observations.

By definition,
\begin{equation}\label{defi laplacian.1/2}
(-\Delta)^{1/2} u(x):=\frac{1}{\pi}\,\text{P.V.}\!\int_\R dy\,
\frac{u(x)-u(y)}{|x-y|^{2}}
\end{equation} 
for all $u:\R\to\R$ whenever the integral and the limit make sense. Assume that $u$ is $2\pi$-periodic. Then, using the Fourier series expansion
$u(x)=\sum_{k\in\Z}u_ke^{ikx}$, where 
$u_k:=\frac{1}{2\pi}\intp dx\, u(x)e^{-ikx}$,
we know form \Cref{eigenfunctions nonlocal} that
\begin{equation}
(-\Delta)^{1/2} u(x)=\sum_{k\in\Z}|k|u_ke^{ikx}.
\end{equation}
To the $2\pi$-periodic function $u$, let us associate the function $u_{\Sph^1}:\Sph^1\subset\C\to\R$ given, for $p=e^{ix}$, by 
\begin{equation}
u_{\Sph^1}(p)=u_{\Sph^1}(e^{ix}):=u(x)
=\sum_{k\in\Z}u_ke^{ikx}
=\sum_{k\in\Z}u_k(e^{ix})^k
=\sum_{k\in\Z}u_kp^k.
\end{equation}

The function $u_{\Sph^1}$ can be naturally extended to a harmonic function in the unit disk $u_{\mathbb D}:\mathbb D\subset\C\to\R$ by writing $z=rp=re^{ix}\in\C$, where $r\geq0$ and $p=e^{ix}\in\Sph^1$, and 
\begin{equation}\label{def:U.ext}
\begin{split}
u_{\mathbb D}(z)&=u_{\mathbb D}(rp):=\sum_{k\in\Z}u_kr^{|k|}p^k
=\sum_{k\in\Z}u_kr^{|k|}(e^{ix})^k\\
&=\sum_{k\geq0}u_k(re^{ix})^k+\sum_{k>0}u_{-k}(re^{-ix})^k
=\sum_{k\geq0}u_kz^k+\sum_{k>0}u_{-k}\overline{z}^k.
\end{split}
\end{equation}
The fact that $\Delta u_{\mathbb D}=0$ in $\mathbb D$ follows directly from the right-hand side of \eqref{def:U.ext} and the fact that $\Delta=4\partial_z\partial_{\overline z}=4\partial_{\overline z}\partial_z$. It is also clear that $u_{\mathbb D}=u_{\Sph^1}$ on $\Sph^1$. Now, one can use this extension into the unit disk to interpret $(-\Delta)^{1/2} u$ as a Dirichlet-to-Neumann map on $\Sph^1$, that is,
\begin{equation}\label{def:U.ext.normal}
\begin{split}
(\partial_ru_{\mathbb D})(p)=\frac{\partial}{\partial r}\Big|_{r=1}u_{\mathbb D}(z)&
=\frac{\partial}{\partial r}\Big|_{r=1}
\sum_{k\in\Z}u_kr^{|k|}p^k
=\sum_{k\in\Z}|k|u_ke^{ikx}=(-\Delta)^{1/2} u(x).
\end{split}
\end{equation}

This Dirichlet-to-Neumann map 
$u_{\Sph^1}(p)\mapsto (\partial_ru_{\mathbb D})(p)$ 
on $\Sph^1$ can now be used to get an integro-differential expression on $\Sph^1$ for $(-\Delta)^{1/2}u_{\Sph^1}$. 
The following lemma shows that, as we claimed at the beginning of this section, the formula that one gets for 
$(-\Delta)^{1/2}u_{\Sph^1}$ on $\Sph^1$ is the same as \eqref{defi laplacian.1/2} for $(-\Delta)^{1/2}u$ on $\R$ replacing $\R$ by $\Sph^1$ and 
$u$ by $u_{\Sph^1}$, but leaving the same kernel.

\begin{lemma}\label{lm1:half.harm}
Given 
$u_{\Sph^1}\in C^{\alpha}(\Sph^1)$ for some $\alpha>1$, let $u_{\mathbb D}:\mathbb D\to\R$ be the harmonic extension of $u_{\Sph^1}$ to the unit disk. Then, 
\begin{equation}
(\partial_ru_{\mathbb D})(p)=\frac{1}{\pi}\,\operatorname{P.\!V.}\!
\int_{\Sph^1}dq\,\frac{u_{\Sph^1}(p)-u_{\Sph^1}(q)}{|p-q|^{2}}
=:(-\Delta)^{1/2}u_{\Sph^1}(p) 
\end{equation}
for all $p\in\Sph^1$, where the second equality must be understood as a definition of an operator $(-\Delta)^{1/2}$ on $\Sph^1$.
\end{lemma}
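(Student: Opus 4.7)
The plan is to identify both sides with $(-\Delta)^{1/2}u(x)$ (in the sense of \eqref{defi laplacian.1/2}), where $u$ is the $2\pi$-periodic function on $\mathbb{R}$ corresponding to $u_{\Sph^1}$ via $u(x) = u_{\Sph^1}(e^{ix})$. The left-hand side $(\partial_r u_{\mathbb D})(p)$ was already shown to equal $(-\Delta)^{1/2}u(x)$ in \eqref{def:U.ext.normal}, so the whole matter reduces to rewriting the integral on $\Sph^1$ as the principal value integral on $\mathbb{R}$.

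First, I would parametrize the circle by $q = e^{iy}$ with $y \in (-\pi,\pi)$, so that the arc length element is $dq = dy$, and use the elementary identity
\begin{equation*}
|p-q|^2 = |e^{ix}-e^{iy}|^2 = 4\sin^2\!\big({\textstyle\frac{x-y}{2}}\big).
\end{equation*}
This turns the right-hand side of the claim into
\begin{equation*}
\frac{1}{\pi}\,\operatorname{P.\!V.}\!\int_{-\pi}^{\pi} dy\,
\frac{u(x)-u(y)}{4\sin^2\!\big({\textstyle\frac{x-y}{2}}\big)}.
\end{equation*}

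Next, starting from the integral in \eqref{defi laplacian.1/2}, I would split $\mathbb{R}$ into intervals of length $2\pi$ and use the $2\pi$-periodicity of $u$ to write
\begin{equation*}
\operatorname{P.\!V.}\!\int_{\R}dy\,\frac{u(x)-u(y)}{|x-y|^2}
=\operatorname{P.\!V.}\!\int_{-\pi}^{\pi}dz\,(u(x)-u(z))
\sum_{k\in\Z}\frac{1}{(x-z-2\pi k)^2}.
\end{equation*}
Exchanging sum and integral is legitimate because for $k\neq 0$ the tail sum converges absolutely and uniformly (with a factor $|k|^{-2}$) while the $C^\alpha$-regularity with $\alpha>1$ ensures $|u(x)-u(z)|\leq C|x-z|$, so the $k=0$ contribution is absolutely integrable away from the singularity and the principal value is inherited only from that single term. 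Then I would invoke the classical identity
\begin{equation*}
\sum_{k\in\Z}\frac{1}{(t+2\pi k)^{2}}=\frac{1}{4\sin^{2}(t/2)},
\end{equation*}
which follows, for instance, by differentiating the partial fraction expansion of the cotangent. This identifies the two integrals.

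Combining the two steps shows that the right-hand side of the claimed identity equals $(-\Delta)^{1/2}u(x)$, which by \eqref{def:U.ext.normal} equals $(\partial_r u_{\mathbb D})(p)$, completing the proof. The main technical point is the justification of swapping the principal value with the infinite sum over $k\in\Z$; this is the only subtle issue, and it is handled cleanly by isolating the $k=0$ term (where the principal value is genuinely needed) from the $k\neq 0$ terms (where the decay $(2\pi k)^{-2}$ makes Fubini applicable) and by using the Lipschitz bound on $u$ coming from the hypothesis $\alpha>1$.
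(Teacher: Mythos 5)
Your second step --- periodizing the kernel $|x-y|^{-2}$ over $\R$ and identifying $\sum_{k\in\Z}(x-y+2\pi k)^{-2}$ with $|e^{ix}-e^{iy}|^{-2}=\big(4\sin^2\frac{x-y}{2}\big)^{-1}$ --- is correct, and it is in fact exactly what the paper does in the closing remark of this appendix (there via the Laplace transform and a table integral rather than the cotangent expansion). Your handling of the interchange of the principal value with the sum over $k$ is also fine. But note that in the paper that computation is presented only as an alternative verification that the two \emph{integro-differential} expressions (on $\R$ and on $\Sph^1$) coincide; it is not, by itself, a proof of the lemma.

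The gap is in your first step. You take \eqref{def:U.ext.normal} as an established identity, but that display is a formal Fourier-side computation: it differentiates the series $\sum_k u_k r^{|k|}p^k$ term by term at $r=1$, where the resulting series $\sum_k |k|u_k e^{ikx}$ is not absolutely convergent for $u\in C^\alpha(\Sph^1)$ with $\alpha>1$ (one only has $|u_k|\lesssim |k|^{-\alpha}$, so $\sum_k |k|\,|u_k|$ may diverge), and it then identifies that series with the principal-value operator of \eqref{defi laplacian.1/2} applied to $u$, which \Cref{eigenfunctions nonlocal} justifies only for a single exponential, not for an infinite superposition. Making these two interchanges rigorous --- i.e., proving that the boundary radial derivative of the harmonic extension exists and is given by a principal-value singular integral of the boundary data --- is precisely the analytic content of the lemma, so your reduction is circular in spirit. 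The paper avoids this by computing $(\partial_r u_{\mathbb D})(p)$ directly from the Poisson kernel: it writes the difference quotient $\delta^{-1}\big(u_{\mathbb D}(p)-u_{\mathbb D}((1-\delta)p)\big)$ using $\int_{\Sph^1}P(z,q)\,dq=1$, passes to the limit $\delta\downarrow 0$ on $\{|p-q|>\epsilon\}$ to produce the kernel $\pi^{-1}|p-q|^{-2}$, and controls the near-diagonal part by a symmetrization in $q$ together with the second-difference bound $|2u_{\Sph^1}(p)-u_{\Sph^1}(q)-u_{\Sph^1}(q_s)|=O(|p-q|^\alpha)$ coming from $\alpha>1$; this is where the regularity hypothesis is genuinely used. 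To complete your route you would have to supply an independent proof that $\partial_r u_{\mathbb D}$ exists on $\Sph^1$ and equals the principal-value integral on $\R$ (e.g., via conjugate-function theory for $u'\in C^{\alpha-1}$), which is comparable in length to the paper's Poisson-kernel argument.
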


\begin{proof}
It is well known that one can write
$u_{\mathbb D}(z)=\int_{\Sph^1}dq\,P(z,q)u_{\Sph^1}(q)$
for all $z\in\mathbb D$, where $P$ denotes the Poisson kernel on the unit disk. It is given by
\begin{equation}
P(z,q):=\frac{1-|z|^2}{2\pi|z-q|^2}\quad 
\text{for all $z\in\mathbb D$ and $q\in\Sph^1$}.
\end{equation}
The fact that a harmonic function which is constant on $\Sph^1$ must be constant in $\mathbb D$ yields that 
$\int_{\Sph^1}dq\,P(z,q)=1$ for all $z\in\mathbb D$. Therefore, for every $p\in\Sph^1$ 
and every $\epsilon>0$
 we have
\begin{equation}\label{DtN.eq1}
\begin{split}
(\partial_ru_{\mathbb D})(p)
&=\lim_{\delta\downarrow0}
\frac{u_{\mathbb D}(p)-u_{\mathbb D}((1-\delta)p)}{\delta}
=\lim_{\delta\downarrow0}\int_{\Sph^1}dq\,(u_{\Sph^1}(p)-u_{\Sph^1}(q))\frac{1}{\delta}P((1-\delta)p,q)\\
&=\lim_{\delta\downarrow0}\Big(\int_{\Sph^1\cap\{|p-q|>\epsilon\}}+\int_{\Sph^1\cap\{|p-q|\leq\epsilon\}}\Big)dq\,(u_{\Sph^1}(p)-u_{\Sph^1}(q))\frac{1}{\delta}P((1-\delta)p,q)\\
&=:A_{\epsilon}+B_{\epsilon},
\end{split}
\end{equation}
where $A_{\epsilon}$ denotes the limit as $\delta\downarrow 0$ of the integral over $\Sph^1\cap\{|p-q|>\epsilon\}$, and $B_\epsilon$ the limit of the other integral.

To compute $A_{\epsilon}$ we use that $p\neq q$. Then, since $|p|=1$, we see that 
\begin{equation}\label{DtN.eq2}
\begin{split}
\lim_{\delta\downarrow0}\frac{1}{\delta}P((1-\delta)p,q)
=\lim_{\delta\downarrow0}\frac{1-|(1-\delta)p|^2}{2\pi\delta|(1-\delta)p-q|^2}
=\lim_{\delta\downarrow0}\Big(1-\frac{\delta}{2}\Big)\frac{1}{\pi|(1-\delta) p-q|^2}
=\frac{1}{\pi|p-q|^2}.
\end{split}
\end{equation}
By dominated convergence, we conclude that 
\begin{equation}\label{DtN.eq4}
\begin{split}
\lim_{\epsilon\downarrow0}A_{\epsilon}&=\lim_{\epsilon\downarrow0}
\int_{\Sph^1\cap\{|p-q|>\epsilon\}}dq\,(u_{\Sph^1}(p)-u_{\Sph^1}(q))\lim_{\delta\downarrow0}\frac{1}{\delta}P((1-\delta)p,q)\\
&=\frac{1}{\pi}\lim_{\epsilon\downarrow0}
\int_{\Sph^1\cap\{|p-q|>\epsilon\}}dq\,
\frac{u_{\Sph^1}(p)-u_{\Sph^1}(q)}{|p-q|^2}.
\end{split}
\end{equation}

Let us now compute $B_{\epsilon}$. To this end, we can assume without loss of generality that $1<\alpha\leq2$. By the symmetry of 
$\Sph^1\cap\{|p-q|\leq\epsilon\}$ with respect to the line 
$\R p$,
\begin{equation}\label{DtN.eq.aux.3}
\begin{split}
\int_{\Sph^1\cap\{|p-q|\leq\epsilon\}}dq\,
\frac{u_{\Sph^1}(p)-u_{\Sph^1}(q)}{|(1-\delta)p-q|^2}
=\frac{1}{2}\int_{\Sph^1\cap\{|p-q|\leq\epsilon\}}dq\,
\frac{2u_{\Sph^1}(p)-u_{\Sph^1}(q)-u_{\Sph^1}(q_s)}
{|(1-\delta)p-q|^2},
\end{split}
\end{equation}
where $q_s\in \Sph^1$ denotes the reflected point to $q$ with respect to $\R p$. On the one hand, since $\Sph^1$ has a tangent line at $p$, we see that the points $q$, $p$, and $q_s$ tend to be aligned as $q$ approaches~$p$. Indeed, 
$|p-q-(q_s-p)|=O(|p-q|^2)$. It is not hard to show that this, together with the fact that $u_{\Sph^1}\in C^{\alpha}(\Sph^1)$ for some $1<\alpha\leq2$, yields that 
\begin{equation}\label{DtN.eq.aux.1}
|2u_{\Sph^1}(p)-u_{\Sph^1}(q)-u_{\Sph^1}(q_s)|
=O(|p-q|^{\alpha})\quad\text{as $q$ tends to $p$.}
\end{equation} 
On the other hand, for every $0\leq \delta\leq 1$ and $q\in\Sph^1$ it holds that $\delta\leq |(1-\delta)p-q|$, and we therefore obtain that
\begin{equation}\label{DtN.eq.aux.2}
|p-q|\leq |p-(1-\delta)p|+|(1-\delta)p-q| =\delta+|(1-\delta)p-q|
\leq 2 |(1-\delta)p-q|.
\end{equation}
Finally, since
$\frac{1}{\delta}P((1-\delta)p,q)=\frac{1-{\delta}/{2}}{\pi|(1-\delta) p-q|^2}$, combining \eqref{DtN.eq.aux.3}, \eqref{DtN.eq.aux.1}, and \eqref{DtN.eq.aux.2} we get
\begin{equation}\label{gy:Bepsilon}
|B_{\epsilon}|\leq C\int_{\Sph^1\cap\{|p-q|\leq\epsilon\}}
dq\,|p-q|^{\alpha-2}
\leq C \epsilon^{\alpha-1}
\end{equation}
for some constant $C>0$ independent of $\epsilon$.
Thus, we conclude that 
$\lim_{\epsilon\downarrow 0}B_{\epsilon}=0$, 
and the lemma follows from this, \eqref{DtN.eq4}, and \eqref{DtN.eq1}.
\end{proof}

Combining the previous results, we deduce the following identities at the level of energies.

\begin{corollary}
Given $u:\R\to\R$ $2\pi$-periodic, let 
$u_{\Sph^1}:\Sph^1\to\R$ be defined by $u_{\Sph^1}(e^{ix}):=u(x)$ for all $x\in\R$, and let $u_{\mathbb D}:\mathbb D\to\R$ be the harmonic extension of $u_{\Sph^1}$ to the unit disk. 

Then,
\begin{equation}
\begin{split}
\EE_{(-\Delta)^{1/2}}(u)&=
\frac{1}{4\pi}\int_{-\pi}^\pi dx\int_{\R}dy\,
\frac{|u(x)-u(y)|^2}{|x-y|^{2}}
=\frac{1}{2}\int_{\mathbb D}|\nabla u_{\mathbb D}|^2\\
&=
\frac{1}{4\pi}
\int_{\Sph^1}dp\int_{\Sph^1}dq\,
\frac{|u_{\Sph^1}(p)-u_{\Sph^1}(q)|^2}{|p-q|^{2}},
\end{split}
\end{equation}
where $\EE_{(-\Delta)^{1/2}}$   
is the energy defined in \eqref{lagrangian rearrange mod u} with $G=0$.
\end{corollary}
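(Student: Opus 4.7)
The plan is to prove the three equalities one at a time, moving through the harmonic extension.

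\textbf{First equality (definition unwinding).} With $s=1/2$ one computes directly from \eqref{defi laplacian_} that $c_{1/2}=1/\pi$. Unfolding the definition \eqref{lagrangian rearrange mod u} of $\EE_{\LL_K}$ for the kernel $K(t)=c_s|t|^{-1-2s}=\frac{1}{\pi t^2}$, with $L=\pi$ and $G=0$, gives the first equality immediately.

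\textbf{Second equality (bridge through the harmonic extension).} First apply Green's identity to the harmonic extension $u_{\mathbb D}$ to obtain
\begin{equation*}
\int_{\mathbb D}|\nabla u_{\mathbb D}|^{2}
\,=\,\int_{\Sph^1}u_{\Sph^1}\,\partial_{r}u_{\mathbb D}.
\end{equation*}
Parametrizing $\Sph^1$ by $p=e^{ix}$, $x\in[-\pi,\pi]$, the identity \eqref{def:U.ext.normal} rewrites the boundary integral as $\int_{-\pi}^{\pi}u\,(-\Delta)^{1/2}u$. Now invoke the periodic integration-by-parts formula \Cref{fl l1} with $L=\pi$, $s=1/2$ and $K(t)=\frac{1}{\pi t^{2}}$, which yields
\begin{equation*}
\int_{-\pi}^{\pi}u\,(-\Delta)^{1/2}u
\,=\,\frac{1}{2\pi}\int_{-\pi}^{\pi}\!dx\int_{\R}dy\,\frac{|u(x)-u(y)|^{2}}{|x-y|^{2}}.
\end{equation*}
Dividing by $2$ produces the second equality. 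The regularity required to apply \Cref{fl l1} and Green's identity is mild (e.g.\ some H\"older continuity for $u_{\Sph^1}$); I would state the corollary under such a hypothesis, as it is not a serious obstacle.

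\textbf{Third equality (two routes).} I see two natural routes, and would use whichever is shorter. \emph{Route A (analytic, via the harmonic extension).} Combine Green's identity $\int_{\mathbb D}|\nabla u_{\mathbb D}|^{2}=\int_{\Sph^1}u_{\Sph^1}\,\partial_{r}u_{\mathbb D}$ with \Cref{lm1:half.harm}, which identifies $\partial_{r}u_{\mathbb D}$ on the circle with the integro-differential $(-\Delta)^{1/2}u_{\Sph^1}$. Then establish a spherical analogue of \Cref{fl l1}: writing the resulting double integral on $\Sph^1\times\Sph^1$, swap the names $p\leftrightarrow q$, add the two forms and divide by two, to obtain
\begin{equation*}
\int_{\Sph^1}u_{\Sph^1}(-\Delta)^{1/2}u_{\Sph^1}
\,=\,\frac{1}{2\pi}\int_{\Sph^1}\!dp\int_{\Sph^1}\!dq\,\frac{|u_{\Sph^1}(p)-u_{\Sph^1}(q)|^{2}}{|p-q|^{2}}.
\end{equation*}
The principal value at the diagonal is handled by truncating the kernel at distance $\varepsilon$ (where everything is absolutely integrable thanks to H\"older regularity) and letting $\varepsilon\downarrow 0$, exactly as in the proof of \Cref{fl l1} and in the estimate \eqref{gy:Bepsilon} in the proof of \Cref{lm1:half.harm}. \emph{Route B (direct, via a periodization identity).} Parametrize both $\Sph^1$ factors by $[-\pi,\pi]$ and use the classical partial-fraction identity
\begin{equation*}
\sum_{k\in\Z}\frac{1}{(x-y+2\pi k)^{2}}
\,=\,\frac{1}{4\sin^{2}\bigl(\tfrac{x-y}{2}\bigr)}
\,=\,\frac{1}{|e^{ix}-e^{iy}|^{2}},
\end{equation*}
together with $2L$-periodicity of $u$, to pass from the double integral over $[-\pi,\pi]\times\R$ to the double integral over $\Sph^1\times\Sph^1$ directly. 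I expect this second route to be the cleanest, so I would present it as the main proof and Route A as an independent remark. The only mild obstacle is the exchange/truncation argument at the diagonal, handled exactly as in \Cref{fl l1} and \Cref{lm1:half.harm}.
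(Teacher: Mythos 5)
Your proposal is correct. The first two equalities are handled exactly as in the paper: the constant $c_{1/2}=1/\pi$ and the definition of $[\cdot]_K$ give the first, and the chain ``\Cref{fl l1} $\Rightarrow$ $\int u\,(-\Delta)^{1/2}u$ $\Rightarrow$ \eqref{def:U.ext.normal} $\Rightarrow$ Green's identity'' gives the second. For the third equality, your Route~A is precisely the paper's proof (\Cref{lm1:half.harm} plus the symmetrization of \eqref{fl eq1} on $\Sph^1$), while your preferred Route~B is genuinely different from the proof of the corollary itself: the paper only carries out that periodization computation afterwards, in a separate remark, at the level of the operator $(-\Delta)^{1/2}u$ (and it derives the identity $\sum_{k\in\Z}|x-y+2k\pi|^{-2}=|e^{ix}-e^{iy}|^{-2}$ via a Laplace-transform computation rather than quoting the partial-fraction expansion of $1/\sin^2$). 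Route~B buys you something real: applied to the quadratic form rather than the operator, it proves the equality of the first and third expressions with no principal value and no need for \Cref{lm1:half.harm} at all, since the integrand $|u(x)-u(y)|^2K(|x-y|)$ is nonnegative and Tonelli justifies unfolding $\int_\R dy$ into the sum over periods --- so the ``mild obstacle'' at the diagonal that you flag is in fact not an obstacle here (it only arises for the operator, not for the energy). Your observation that a H\"older hypothesis on $u$ should be stated is also fair; the paper's statement is silent on regularity but implicitly needs it for \Cref{fl l1} and for the finiteness of the Dirichlet energy of $u_{\mathbb D}$.
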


\begin{proof}
Using \Cref{fl l1}, that $u(x)=u_{\mathbb D}(e^{ix})$, and \eqref{def:U.ext.normal} we have that
\begin{equation}\label{coro:half.harm.eq1}
\frac{1}{2\pi}\int_{-\pi}^\pi dx\int_{\R}dy\,
\frac{|u(x)-u(y)|^2}{|x-y|^{2}}
=\int_{-\pi}^\pi u\, (-\Delta)^{1/2}u
=\int_{\Sph^1} u_{\mathbb D}\, \partial_ru_{\mathbb D}.
\end{equation}
Now, the divergence theorem and the fact that $u_{\mathbb D}$ is harmonic shows that the right-hand side of \eqref{coro:half.harm.eq1} equals 
$\int_{\mathbb D}|\nabla u_{\mathbb D}|^2$.

Finally, using \Cref{lm1:half.harm} and since 
$u_{\mathbb D}=u_{\Sph^1}$ on $\Sph^1$, a standard symmetrization argument (as the one in 
\eqref{fl eq1} in the proof of 
 \Cref{fl l1}) shows that 
\begin{equation}
\int_{\Sph^1} u_{\mathbb D}\, \partial_ru_{\mathbb D}
=
\frac{1}{2\pi}
\int_{\Sph^1}dp\int_{\Sph^1}dq\,
\frac{|u_{\Sph^1}(p)-u_{\Sph^1}(q)|^2}{|p-q|^{2}},
\end{equation}
as claimed.
\end{proof}

\begin{remark}{\em 
We follow the notation above, 
$u_{\Sph^1}(e^{ix})=u(x)$. We finish this section by proving that 
$(-\Delta)^{1/2} u(x)=(-\Delta)^{1/2}u_{\Sph^1}(e^{ix})$ for all $x\in\R$ without using the extension argument above via $u_{\mathbb D}$, but directly at the level of integro-differential expressions. 

Note that, by the periodicity of $u$, we have
\begin{equation}\label{hhm:rmk.eq}
(-\Delta)^{1/2} u(x)=\frac{1}{\pi}\int_\R dy\,
\frac{u(x)-u(y)}{|x-y|^{2}}
=\frac{1}{\pi}\intp dy\,(u(x)-u(y))\sum_{k\in\Z}
\frac{1}{|x-y+2k\pi|^{2}}.
\end{equation}
Recall that 
\begin{equation}
  s^{-\gamma}=\frac{1}{\Gamma(\gamma)}\int_0^{+\infty}dt\,
  t^{\gamma-1}e^{-st}\quad\text{for all $s>0$.}
\end{equation}
Hence, assuming that $0< x-y< 2\pi$ (the case $-2\pi< x-y<0$ is analogous), we have
\begin{equation}
\begin{split}
\sum_{k\in\Z}\frac{1}{|x-y+2k\pi|^{2}}
&=\sum_{k\in\Z}\int_0^{+\infty}dt\,te^{-|x-y+2k\pi|t}\\
&=\int_0^{+\infty}dt\,te^{-(x-y)t}\sum_{k\geq0}(e^{-2\pi t})^k
+\int_0^{+\infty}dt\,te^{(x-y)t}\sum_{k<0}(e^{2\pi t})^{k}\\
&=\int_0^{+\infty}dt\,t\frac{e^{-(x-y)t}}{1-e^{-2\pi t}}
+\int_0^{+\infty}dt\,t\frac{e^{(x-y)t}e^{-2\pi t}}{1-e^{-2\pi t}}\\
&=\int_0^{+\infty}dt\,t\frac{e^{-(x-y-\pi)t}+e^{(x-y-\pi)t}}
{e^{\pi t}-e^{-\pi t}}
=\int_0^{+\infty}dt\,t\frac{\cosh((\pi-x+y)t)}{\sinh(\pi t)}.
\end{split}
\end{equation}

For every $0<r<2\pi$ it holds that (see \cite[Number 8 in Section 3.524 on page 377]{Table of Integrals})
$$\int_0^{+\infty}dt\,t\frac{\cosh((\pi-r)t)}{\sinh(\pi t)}
=\frac{1}{2-2\cos(r)},$$ 
and since $|e^{ix}-e^{iy}|^{2}=2-2\cos(x-y)$, we deduce that 
$$\sum_{k\in\Z}\frac{1}{|x-y+2k\pi|^{2}}=\frac{1}{|e^{ix}-e^{iy}|^{2}}.$$
Plugging this identity in \eqref{hhm:rmk.eq}, we conclude that
\begin{equation}
(-\Delta)^{1/2} u(x)
=\frac{1}{\pi}\intp dy\,
\frac{u_{\Sph^1}(e^{ix})-u_{\Sph^1}(e^{iy})}{|e^{ix}-e^{iy}|^{2}}
=\frac{1}{\pi}\int_{\Sph^1} dq\,
\frac{u_{\Sph^1}(e^{ix})-u_{\Sph^1}(q)}{|e^{ix}-q|^{2}}
=(-\Delta)^{1/2}u_{\Sph^1}(e^{ix}). 
\end{equation}
}\end{remark}

\section{On the class of kernels considered in \Cref{thm rearrangement} }
\label{section:appendix rearrangement.kernels}

In this section we shall discuss the three conditions in the rearrangement  \Cref{thm rearrangement}  imposed on the kernel $K$ by exhibiting several examples. In particular we show that the theorem is false in general for nonincreasing kernels, which is the most natural class which contains all three  conditions. We also show that neither of the three classes is contained in one of the others.   

Let us start by exhibiting a simple example which shows that the statement in \Cref{thm rearrangement}, namely, that $[u^{*\eper}]_K\leq [u]_K$ for every $2L$-periodic function 
$u:\R\to\R$ such that $u\in L^2(-L,L)$, does not hold for a nonincreasing kernel $K$. Take, for example, $K_{\epsilon}$ to be the characteristic function of $[0,L+\epsilon]$ for some $0<\epsilon<L$, and define $\overline{K}_{\epsilon}(t):=\sum_{k\in\Z}K_{\epsilon}(|t+2kL|)$ as in the proof of \Cref{thm rearrangement}. Since $K_{\epsilon}(|t|)=\chi_{[-L-\epsilon,L+\epsilon]}(t)$ for all $t\in\R$, we have that $\overline{K}_{\epsilon}$ is the even $2L$-periodic function given in $[-L,L]$ by 
$$
  \overline{K}_{\epsilon}(t)=
  \left\{\begin{array}{l}
  1\quad\text{ if }t\in(-L+\epsilon,L-\epsilon),
\\
  2\quad\text{ if }t\in[-L,-L+\epsilon]\cup [L-\epsilon,L].
  \end{array}\right.
$$
Note that in particular $\overline{K}_{\epsilon}$ is 
smaller in $(0,L-\epsilon)$ than in $(L-\epsilon,L).$ 
This last observation shows that our proof of  \Cref{thm rearrangement} cannot work for such a kernel, since the Riesz rearrangement inequality of \Cref{thm:gy:sharp Friedberg Luttinger} does not apply. In fact, we now show that the conclusion given in \Cref{thm rearrangement} is indeed false for the kernel
$K_{\epsilon}=\chi_{[0,L+\epsilon]}$. 

Indeed, by using \eqref{gy:expr of K by periodicity} for $[u]_{K_{\epsilon}}$, expanding the square $|u(x)-u(y)|^2$ as $u^2(x)+u^2(y)-2u(x)u(y)$, and using the fact that rearrangement does not modify the double integrals containing $u^2(x)$ and $u^2(y)$ (assuming $u$ to be in $L^2(-L,L)$ for the integrals to be finite) we see that $[u^{*\eper}]_{K_{\epsilon}}\leq [u]_{K_{\epsilon}}$ is equivalent to the inequality
\begin{equation}\label{eq:gy:example ineq K}
  \int_{-L}^L dx\int_{-L}^{L}dy\,
  u^{*\eper}(x)u^{*\eper}(y)\overline{K}_{\epsilon}(x-y)
  \geq
  \int_{-L}^L dx\int_{-L}^{L}dy\,
  u(x)u(y)\overline{K}_{\epsilon}(x-y).
\end{equation}
We now show that this inequality does not hold for some functions $u\in L^2(-L,L)$. For instance, take
$$
  u=a\chi_{(-\frac{L}{2}-\delta,-\frac{L}{2}+\delta)}+
  b\chi_{(\frac{L}{2}-\delta,\frac{L}{2}+\delta)},
$$
where $a$ and $b$ are two positive numbers and $0<\delta<L/2$.
If we chose $\delta$ sufficiently small compared to $\epsilon$, then 
$$
    \int_{-L}^L dx\int_{-L}^{L}dy\,
  u(x)u(y)\overline{K}_{\epsilon}(x-y)=
  4\delta^2\big(a^2\overline{K}_{\epsilon}(0)
  +b^2\overline{K}_{\epsilon}(0)+2ab\overline{K}_{\epsilon}(L)\big).
$$
The function $u^{*\eper}$ restricted to $(-L,L)$ is equal to   
$$
  u^{*\eper}=\max\{a,b\}\chi_{(-{\delta},{\delta})}
  +\min\{a,b\}\chi_{(-2\delta,-{\delta}]\cup[{\delta},2\delta)}
$$
 which, for $\delta$ small enough, leads to
$$
    \int_{-L}^L dx\int_{-L}^{L}dy\,
  u(x)^{*\eper}u^{*\eper}(y)\overline{K}_{\epsilon}(x-y)=
  4\delta^2\big(a^2\overline{K}_{\epsilon}(0)
  +b^2\overline{K}_{\epsilon}(0)+2ab\overline{K}_{\epsilon}(0)\big).
$$
Therefore, since $\overline K_{\epsilon}(L)=2>1=\overline K_{\epsilon}(0)$, \eqref{eq:gy:example ineq K} does not hold for this function $u$. Indeed, we have shown that for every choice of positive numbers $a$ and $b$ it holds that  
$[u^{*\eper}]_{K_{\epsilon}}> [u]_{K_{\epsilon}}$.

Let us now comment on the class of kernels $K$ considered in the statement of \Cref{thm rearrangement}. It is simple to show that the collection of kernels satisfying $(iii)$ does not contain and is not contained in the collection of kernels described neither by $(i)$ nor by $(ii)$.

We now show that the collection of kernels satisfying $(i)$ does not contain and is not contained in the collection of kernels described by $(ii)$. On the one hand, the kernel $K(t)=(t^2+1)^{-(1+2s)/2}$ satisfies the growth condition \eqref{defi laplacian_K growth1} with $\Lambda=1$, and belongs to the class $(ii)$ as we have seen right after  \Cref{def compl mon rearrangement} or in \Cref{remark:corrv15 remark 6.2}.
However, $K$ is strictly concave for $t$ close to the origin. Therefore, $K$ is as in $(ii)$ but it does not satisfy $(i)$, that is, $K$ is not convex.\footnote{Instead, recall that the function $\tau\mapsto K(\tau^{1/2})$ is convex, as follows from \Cref{def compl mon rearrangement}.}

On the other hand, if a kernel $K$ is as in $(ii)$ then it is infinitely differentiable in $(0,+\infty)$. This follows form \Cref{def compl mon rearrangement} or by differentiating \eqref{kernel Laplace transform}. 
At the same time, there exist convex (and even strictly convex) kernels which are not infinitely differentiable in $(0,+\infty)$. These kernels will fulfill $(i)$ but not $(ii)$. Nevertheless, the lack of regularity is not the only property that prevents a convex kernel to belong to the class 
$(ii)$. In \Cref{example i vs ii} we provide an example of a smooth kernel that fulfills $(i)$ but not~$(ii)$.

We finally mention that there exist kernels that satisfy both $(i)$ and $(ii)$, such as the kernel of the fractional Laplacian (see \Cref{remark:corrv15 remark 6.2}).

\begin{lemma}\label{example i vs ii}
Given $0<s<1$, set 
$$K(t)=\int_{t^2}^{+\infty}\!dr\int_r^{+\infty}\!dx\,\frac{2+\sin x}{x^{s+5/2}}\quad\text{for all }t>0.$$
Then, $K$ is infinitely differentiable in $(0,+\infty)$, satisfies \eqref{defi laplacian_K growth1} and \eqref{defi laplacian_K growth2}, is strictly convex in $(0,+\infty)$, but $\tau>0\mapsto K(\tau^{1/2})$ is not a completely monotonic function.
\end{lemma}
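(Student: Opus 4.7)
\medskip

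\noindent\textbf{Plan of proof.}
The plan is to reduce everything to the auxiliary function $J(\tau):=K(\tau^{1/2})$, whose definition makes it immediate that
\begin{equation*}
J(\tau)=\int_{\tau}^{+\infty}\!dr\int_r^{+\infty}\!dx\,\frac{2+\sin x}{x^{s+5/2}}.
\end{equation*}
I would first differentiate under the integral sign and use the fundamental theorem of calculus to obtain
\begin{equation*}
J'(\tau)=-\int_\tau^{+\infty}\!dx\,\frac{2+\sin x}{x^{s+5/2}},\qquad
J''(\tau)=\frac{2+\sin\tau}{\tau^{s+5/2}},
\end{equation*}
from which all higher derivatives of $J$ (and hence of $K(t)=J(t^2)$) follow as smooth functions of $\tau\in(0,+\infty)$. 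In particular, $K\in C^\infty((0,+\infty))$.

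The bounds \eqref{defi laplacian_K growth1} and \eqref{defi laplacian_K growth2} are next: using $1\leq 2+\sin x\leq 3$ and integrating twice one finds $C_1\tau^{-(s+1/2)}\leq J(\tau)\leq C_2\tau^{-(s+1/2)}$ for suitable constants depending on $s$. Substituting $\tau=t^2$ yields $K(t)\asymp t^{-(1+2s)}$, which gives both bounds.

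For strict convexity, compute
\begin{equation*}
K''(t)=2J'(t^2)+4t^2 J''(t^2).
\end{equation*}
Setting $\tau=t^2$ and using the explicit formulas above, $4\tau J''(\tau)=\tfrac{4(2+\sin\tau)}{\tau^{s+3/2}}\geq \tfrac{4}{\tau^{s+3/2}}$, while
$-2J'(\tau)\leq 2\int_\tau^{+\infty}\tfrac{3}{x^{s+5/2}}dx=\tfrac{12}{(2s+3)\tau^{s+3/2}}$. Subtracting,
\begin{equation*}
K''(t)\geq\frac{4}{\tau^{s+3/2}}-\frac{12}{(2s+3)\tau^{s+3/2}}=\frac{8s}{(2s+3)\,\tau^{s+3/2}}>0,
\end{equation*}
since $s>0$. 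This gives strict convexity of $K$ on $(0,+\infty)$.

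The main point is to show that $J$ fails to be completely monotonic, and this is where I would focus most of the argument. I would inspect the third derivative: differentiating $J''$ yields
\begin{equation*}
J'''(\tau)=\frac{\cos\tau}{\tau^{s+5/2}}-\Bigl(s+\tfrac{5}{2}\Bigr)\frac{2+\sin\tau}{\tau^{s+7/2}}.
\end{equation*}
Complete monotonicity would force $(-1)^3 J'''\geq 0$, i.e.\ $J'''\leq 0$ everywhere. Evaluating at $\tau_n=2\pi n$ with $n$ a large positive integer,
\begin{equation*}
J'''(2\pi n)=\frac{1}{(2\pi n)^{s+5/2}}-\frac{2(s+5/2)}{(2\pi n)^{s+7/2}},
\end{equation*}
which is strictly positive as soon as $2\pi n>2(s+5/2)$. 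Thus $J'''(\tau_n)>0$ for all $n$ large enough, so $J$ is not completely monotonic. The oscillating term $\sin x$ in the definition of $K$ is tailor-made to keep $J''\geq 0$ (thanks to the shift by $2$) while allowing the sign of $J'''$ to be dominated by $\cos\tau$ at large $\tau$, which is the delicate balance and the heart of the construction.
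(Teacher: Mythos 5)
Your proposal is correct and follows essentially the same route as the paper: reduce to $J(\tau)=K(\tau^{1/2})$, compute $J'$, $J''$, $J'''$ explicitly, bound $K''$ from below to get strict convexity, and exhibit positivity of $J'''$ to rule out complete monotonicity. The only (harmless) difference is that you make the last step more explicit by evaluating $J'''$ at $\tau=2\pi n$, whereas the paper merely observes that $J'''$ changes sign infinitely often for large $\tau$.
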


\begin{proof}
We first show that $K$ satisfies \eqref{defi laplacian_K growth1} and \eqref{defi laplacian_K growth2}. Since 
$1\leq2+\sin x\leq3$ for all $x>0$, 
\begin{equation}
\frac{t^{-1-2s}}{(s+\frac{3}{2})(s+\frac{1}{2})}=\int_{t^2}^{+\infty}\!dr\int_r^{+\infty}\!\frac{dx}{x^{s+5/2}}
\leq K(t)\leq3\int_{t^2}^{+\infty}\!dr\int_r^{+\infty}\!\frac{dx}{x^{s+5/2}}
=3\frac{t^{-1-2s}}{(s+\frac{3}{2})(s+\frac{1}{2})}.
\end{equation}
Moreover, since $x\mapsto x^{-s-5/2}(2+\sin x)$ is infinitely differentiable in $(0,+\infty)$, so is $K$. 

Let us now check that $K$ is strictly convex in $(0,+\infty)$. Set 
\begin{equation}
f(\tau):=\int_{\tau}^{+\infty}\!dr\int_r^{+\infty}\!dx\,\frac{2+\sin x}{x^{s+5/2}}\quad\text{for all }\tau>0.
\end{equation}
Then $K(t)=f(t^2)$, which yields
$K''(t)=4t^2f''(t^2)+2f'(t^2)$. Note that 
\begin{equation}
f'(\tau)=-\int_{\tau}^{+\infty}\!dx\,\frac{2+\sin x}{x^{s+5/2}}\quad \text{and}\quad
f''(\tau)=\frac{2+\sin \tau}{\tau^{s+5/2}}.
\end{equation}
Therefore,
\begin{equation}
\begin{split}
K''(t)&=4t^2f''(t^2)+2f'(t^2)
=4\frac{2+\sin (t^2)}{t^{2s+3}}
-2\int_{t^2}^{+\infty}\!dx\,\frac{2+\sin x}{x^{s+5/2}}\\
&\geq\frac{4}{t^{2s+3}}-2
\int_{t^2}^{+\infty}\!\frac{3\,dx}{x^{s+5/2}}
=\frac{4}{t^{2s+3}}-\frac{6t^{-2s-3}}{s+\frac{3}{2}}
=4t^{-2s-3}\Big(1-\frac{3}{2s+3}\Big)>0
\end{split}
\end{equation}
for all $t>0$. That is, $K$ is strictly convex in $(0,+\infty)$. 

Finally, it remains to check that $\tau\mapsto K(\tau^{1/2})=f(\tau)$ is not a completely monotonic function. If it was, then we would have
$f'''(\tau)\leq0$ for all $\tau>0$. However, 
\begin{equation}
f'''(\tau)=\frac{\cos \tau}{\tau^{s+5/2}}-\Big(s+\frac{5}{2}\Big)\frac{2+\sin \tau}{\tau^{s+7/2}}
=\frac{1}{\tau^{s+5/2}}\Big(\!\cos \tau-\Big(s+\frac{5}{2}\Big)\frac{2+\sin \tau}{\tau}\Big),
\end{equation}
which changes sign infinitely many times for $\tau$ big enough.
\end{proof}

\section{Stable periodic solutions are constant}
\label{section:appendix:minimizers const}

We show in this appendix that stable periodic solutions of $\LL_Ku=f(u)$ in $\R$  must be constant. Here the corresponding variational problem has no constraint. That is, stability means that the second variation of
$$
\EE_{\LL_K}:=\frac{1}{2}[u]_K^2-\int_{-L}^L F(u),
$$ 
at the $2L$-periodic solution $u$, is nonnegative
definite when acting on $2L$-periodic functions. This condition is stated in an equivalent way in \eqref{eq:gy:appen:D2}.
This result applies to all kernels having the standard growth bounds.

\begin{lemma}
\label{lemma:minimizers constant}
Let $K$ satisfy the bounds \eqref{defi laplacian_K growth1} and \eqref{defi laplacian_K growth2},  $L>0$, and $f=F'\in C^{1+\beta}(\R)$ for some $\beta>0$.

If $u\in L^\infty(\R)$ is a $2L$-periodic stable solution of $\LL_Ku=f(u)$ in $\R$, then $u$ is constant. 
\end{lemma}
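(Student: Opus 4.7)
My plan is to show that, under stability, the derivative $u'$ must have constant sign on $\R$, which combined with periodicity forces $u'\equiv 0$ and hence $u$ constant.

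First I would upgrade the regularity of $u$: applying the regularity \Cref{fthm:Holder reg periodic} $(ii)$ and bootstrapping (using $f\in C^{1+\beta}$ and the bound $u\in L^\infty$) yields $u\in C^{1+\nu}(\R)$ for some $\nu>2s$. In particular $u'\in C^\nu(\R)$ is bounded, $2L$-periodic, and satisfies $[u']_K<+\infty$. Since $\LL_K$ commutes with translations, differentiating $\LL_K u=f(u)$ gives the linearized identity $\LL_K u'=f'(u)\,u'$ in $\R$.

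Next I would exploit the two natural test functions in the stability inequality. Computing the second variation of $\EE_{\LL_K}$ (using that $\frac{d^2}{dt^2}|_{t=0}\tfrac12[u+t\psi]_K^2=[\psi]_K^2$ and $\frac{d^2}{dt^2}|_{t=0}\int F(u+t\psi)=\int f'(u)\psi^2$), stability amounts to
\[
\mathcal Q(\psi):=[\psi]_K^2-\int_{-L}^L f'(u)\,\psi^2\;\geq\;0
\]
for every $2L$-periodic $\psi\in L^\infty$ with $[\psi]_K<+\infty$ (first for smooth test functions, then by density). Testing with $\psi=u'$ and applying the integration by parts of \Cref{fl l1} gives $[u']_K^2=\langle u',u'\rangle_K=\int_{-L}^L u'\,\LL_K u'=\int_{-L}^L f'(u)(u')^2$, i.e. $\mathcal Q(u')=0$. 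Testing then with $\psi=|u'|$, which is admissible since $|u'|\in C^\nu(\R)\cap L^\infty$, $2L$-periodic, with $[|u'|]_K\leq[u']_K<+\infty$, and satisfies $\int f'(u)|u'|^2=\int f'(u)(u')^2=[u']_K^2$, the inequality $\mathcal Q(|u'|)\geq 0$ yields $[|u'|]_K\geq[u']_K$, so
\[
[|u'|]_K=[u']_K.
\]

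Finally, expanding both seminorms and using the pointwise inequality $\bigl||u'(x)|-|u'(y)|\bigr|\leq|u'(x)-u'(y)|$ (which is always an equality or a strict inequality depending on whether $u'(x)u'(y)\geq 0$ or $<0$), the above identity together with the lower bound \eqref{defi laplacian_K growth2}, which gives $K>0$ on $(0,+\infty)$, forces $u'(x)u'(y)\geq 0$ for a.e.\ $(x,y)\in(-L,L)\times\R$. By continuity of $u'$ this extends to all pairs, so $u'$ has constant sign on $\R$. Combined with $\int_{-L}^L u'=u(L)-u(-L)=0$ from periodicity, this forces $u'\equiv 0$, hence $u$ is constant. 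The only delicate step in the argument is verifying that $\psi=|u'|$ can be used in the stability inequality; this is routine by a mollification/approximation argument since $|u'|\in C^\nu$, $2L$-periodic, with finite $K$-seminorm, and $f'(u)\in L^\infty$.
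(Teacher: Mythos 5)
Your proof is correct, and it diverges from the paper's in the concluding step in an interesting way. Both arguments begin identically: regularity gives $u'\in C^{\nu}(\R)$ with $\nu>2s$ solving the linearized equation $\LL_K u'=f'(u)u'$, testing stability with $u'$ gives $\mathcal Q(u')=0$ via the integration by parts of \Cref{fl l1}, and testing with $|u'|$ combined with the pointwise inequality $\bigl||u'(x)|-|u'(y)|\bigr|\leq|u'(x)-u'(y)|$ yields $[|u'|]_K=[u']_K$ (equivalently, $\mathcal Q(|u'|)=0$). From there the paper takes a longer route: it observes that $t\mapsto \mathcal Q(|u'|+t\eta)$ is nonnegative and vanishes at $t=0$, deduces that $|u'|$ is a weak (hence, by regularity, pointwise) solution of the linearized equation, and invokes the standard strong maximum principle for $\LL_K$ on $\R$ to conclude $|u'|\equiv 0$ or $|u'|>0$, the latter being excluded by periodicity. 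You instead exploit the equality case directly: since $|u'(x)-u'(y)|^2-\bigl||u'(x)|-|u'(y)|\bigr|^2=2\bigl(|u'(x)u'(y)|-u'(x)u'(y)\bigr)$ is strictly positive exactly when $u'(x)u'(y)<0$, and $K>0$ everywhere by the lower bound \eqref{defi laplacian_K growth2}, the identity $[|u'|]_K=[u']_K$ forces $u'(x)u'(y)\geq 0$ a.e., hence (by continuity) $u'$ of constant sign, and $\int_{-L}^L u'=0$ finishes. Your version is more elementary --- it avoids both the first-variation computation at $|u'|$ and the maximum principle --- at the price of using the strict positivity of $K$ in an essential way; the paper's route is the one that generalizes to settings where one only controls the sign of the kernel near the diagonal. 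Your admissibility remark for $\psi=|u'|$ is fine (indeed $|u'|\in C^{\min\{\nu,1\}}(\R)\subset C^{2s+\epsilon}(\R)$, so no mollification is even needed).
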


\begin{proof}
As in the proof of \Cref{coro constrained minim lagrangian2} or of \Cref{thm: frac Laplacian Intro}, one can see that $u'\in C^{\nu}(\R)$ for some $\nu>2s$ and that $u'$ satisfies the linearized equation $\LL_Ku'=f'(u)u'$ in $\R$. In view of the stability of $u$, the second variation of $\EE_{\LL_K}$ at $u$ is nonnegative, i.e, 
\begin{equation}\label{eq:gy:appen:D2}
  D^2\EE_{\LL_K}(u)(\xi,\xi):=
  \frac{1}{2}\int_{-L}^L\!dx\int_{\R}dy\, {|\xi(x)-\xi(y)|^2}K(|x-y|)
- \int_{-L}^{L} f'(u)\xi^2\geq 0
\end{equation}
for all $2L$-periodic functions $\xi\in C^{2s+\epsilon}(\R)$ and for all $\epsilon>0$.

Multiplying by $u'$ the linearized equation satisfied by $u'$ and integrating in $(-L,L)$ we deduce from  the integration by parts formula \Cref{fl l1} (applied with $u$ and
$\psi$ both replaced by $u’$, a $2L$-periodic function of class $C^{\nu}(\R)$ with $\nu>2s$) that $D^2\EE_{\LL_K}(u)(u',u')=0.$ 
Since $||u'(x)|-|u'(y)||\leq |u'(x)-u'(y)|$ it follows that
$D^2\EE_{\LL_K}(u)(|u'|,|u'|)\leq D^2\EE_{\LL_K}(u)(u',u')=0,$ 
which in turn shows that 
$D^2\EE_{\LL_K}(u)(|u'|,|u'|)$ vanishes too, by \eqref{eq:gy:appen:D2} used with $\xi=|u'|$.
 
Thus the function 
$$
   w(t):=D^2\EE_{\LL_K}(u)(|u'|+t\eta,|u'|+t\eta)
$$
vanishes at $t=0$ and, in view of \eqref{eq:gy:appen:D2}, is nonnegative for all $t\in \R$ and all $2L$-periodic functions $\eta$. It follows that $w'(0)=0$. Computing $w'(0)$, integrating by parts, and using that $\eta$ is arbitrary, we obtain that $|u'|$ is a weak solution of 
\begin{equation}
    \LL_K |u'|-f'(u)|u'|=0 \text{ in }\R.
\end{equation}
Thus, by our regularity result \Cref{fthm:Holder reg periodic}, it follows that $|u'|$ is of classe $C^{\omega}$ for some $\omega>2s$. Therefore $\LL_K|u'|$ makes sense pointwise and $|u'|$ is a pointwise solution of the equation.

Now, by the well known strong maximum principle for the operator $\LL_K$ in $\R$, and since $|u'|\geq 0$ in $\R$, we must have that either $|u'|\equiv 0$ or $|u'|>0$ in $\R$ (we are not using here the strong maximum principle of \Cref{section:max principle} for periodic solutions, whose proof is harder). Now $|u'|>0$ in $\R$ is not possible, since $u$ is periodic. Thus $|u'|\equiv 0$ and hence $u$ is constant in $\R$. 
\end{proof}

\end{document}